\documentclass[EJP]{ejpecp} 

\usepackage[utf8]{inputenc}

\usepackage[square]{natbib}
\usepackage{titlesec}
\usepackage{parskip}
\usepackage{listings} 
\usepackage{lineno} 
\usepackage[normalem]{ulem} 
\usepackage{xcolor}
\usepackage{tabularx}
\usepackage{stmaryrd}
\usepackage{enumerate}
\usepackage{mathrsfs}
\usepackage{subcaption}
\usepackage{float}
\usepackage{booktabs}
\usepackage{authblk}
\usepackage{enumerate}  
\usepackage{orcidlink}
\usepackage{placeins}

\SHORTTITLE{Optimising two-block averaging kernels}

\TITLE{Optimising two-block averaging kernels to speed up Markov chains} 

\AUTHORS{%
  Ryan~J.Y.~Lim\footnote{Department of Statistics and Data Science, National University of Singapore,
    Level 7, 6 Science Drive 2, 117546, Singapore.}%
  \footnote{\EMAIL{ryan.limjy@u.nus.edu}}%
  \and
  Michael~C.H.~Choi\textsuperscript{*}%
  \footnote{\EMAIL{mchchoi@nus.edu.sg}, corresponding author}\orcid{0000-0003-0309-3217}%
}

\KEYWORDS{Markov chains; group averaging; Kullback-Leibler divergence; Markov chain Monte Carlo; submodularity; majorisation-minimisation; Cheeger's constant; log-Sobolev constant} 

\AMSSUBJ{60J10, 60J22, 65C40, 90C27, 94A15, 94A17} 

\SUBMITTED{TO FILL} 
\ACCEPTED{} 

\VOLUME{0}
\YEAR{2023}
\PAPERNUM{0}
\DOI{10.1214/YY-TN}

\ABSTRACT{We study the problem of selecting optimal two-block partitions to accelerate the mixing of finite Markov chains under group-averaging transformations. The main objectives considered are the Kullback-Leibler (KL) divergence and the Frobenius distance to stationarity. We establish explicit connections between these objectives and the induced projection chain. In the case of the KL divergence, this reduction yields explicit decay rates in terms of the log-Sobolev constant. For the Frobenius distance, we identify a Cheeger-type functional that characterises optimal cuts. This formulation recasts two-block selection as a structured combinatorial optimisation problem admitting difference-of-submodular decompositions. We further propose several algorithmic approximations, including majorisation–minimisation and coordinate descent schemes, as computationally feasible alternatives to exhaustive combinatorial search. Our numerical experiments reveal that optimal cuts under the two objectives can substantially reduce total variation distance to stationarity and demonstrate the practical effectiveness of the proposed approximation algorithms.}

\DeclareMathOperator{\Tr}{Tr}
\DeclareMathOperator*{\argmin}{arg\,min}
\DeclareMathOperator*{\argmax}{arg\,max}

\begin{document}



\section{Introduction}
This work is motivated by recent advancements in group-averaged Markov chains, where in \cite{GPG}, it is shown that given a Gibbs kernel $G$, group-averaging kernels of the form $GPG$ can exhibit significant theoretical improvements over the base kernel $P$. Improvements in metrics such as mixing time, spectral gaps and Kullback-Leibler (KL) divergence have been shown. However, while the mechanism of group-averaging was thoroughly analysed, the question of how to choose the underlying group or partition remained largely open. In particular, beyond the presence of an intrinsic symmetry, it was unclear how one should select blocks in order to maximise improvements. In this paper, we investigate this optimisation problem, focusing primarily on the special case of two blocks. 

Precisely, suppose that there is a base $\pi$-stationary Markov chain sampler with transition matrix $P$ on a finite state space $\mathcal{X} = S \sqcup S'$, where $S \subset \mathcal{X}$ and $S'$ is the complement of $S$ with respect to $\mathcal{X}$. Let $G = G_S$ denote the Gibbs kernel with respect to the partition $\mathcal{X} = S \sqcup S'$. We shall be interested in (approximately) solving combinatorial optimisation problems of the form
\begin{align*}
    &\min_{S \subset \mathcal{X};~ S \neq \emptyset, \mathcal{X}} \mathrm{dist}(G_SP, \Pi), \quad \min_{S \subset \mathcal{X};~ S \neq \emptyset, \mathcal{X}} \mathrm{dist}(P G_S, \Pi), \\
    &\min_{S \subset \mathcal{X};~ S \neq \emptyset, \mathcal{X}} \mathrm{dist}(G_SPG_S, \Pi), \quad \min_{S,V \subset \mathcal{X};~ S,V \neq \emptyset, \mathcal{X}} \mathrm{dist}(G_VPG_S, \Pi),
\end{align*}
where $\Pi$ is the transition matrix whose rows equal to $\pi$ respectively, and $\mathrm{dist}$ is either the KL divergence or the (squared)-Frobenius norm. The above notions shall be recalled properly in Section \ref{sec:prelim}, but we briefly present them here in order to motivate the setup. Solving these optimisation problems offers a way and gives insight on optimally choosing a two-block partition that minimises a suitable ``distance" to stationarity. Interestingly, we show that these problems are closely connected to Cheeger-type functionals, entropy, and the projection chain $\overline{P}$ of $P$ with respect to the partition $(S,S')$. Moreover, they admit decompositions into differences of submodular functions, allowing tools from submodular optimisation to assist in identifying optimal or near-optimal partitions.

Solving the above combinatorial optimisation problems also has important consequences in mixing time analysis. Specifically, we shall prove in Proposition \ref{prop:GSPGSlPi} below that for $l \in \mathbb{N}$ and $\pi$-reversible $P$ we have
\begin{align*}
    \argmin_{S \neq \mathcal{X}, \emptyset} \|(G_SPG_S)^l - \Pi\|_{F, \pi}^2 = \argmin_{S \neq \mathcal{X}, \emptyset} \|G_SPG_S - \Pi\|_{F, \pi}^2,
\end{align*}
where $\|\cdot\|_{F, \pi}$ is the Frobenius norm. Therefore, to minimise the long-time discrepancy with stationarity in squared-Frobenius norm, it suffices to find a minimiser in one-step distance to stationarity rather than investigating the $l$-step distance to stationarity. 

The rest of this paper is organized as follows. We first establish in Section \ref{sec:KL} that the KL divergence from stationarity of the group-averaged kernels $(GPG)^l$ is equal to that of the projection chain $(\overline{P})^l$ and its stationary distribution. In the case where $G$ consists of two blocks, this reduction allows us to derive an explicit decay rate in terms of the log-Sobolev constant of $\overline{P}^2$. Moreover, via data-processing inequalities, the same decay rate extends to the kernels $GP$ and $PG$.

In Section \ref{sec:frob}, we turn to the Frobenius distance to stationarity for the kernels $GPG$, $PG$ and $GP$. In the two-orbit case, we show that the optimisation problem reduces to maximising a Cheeger-type functional $g(S).$ Interestingly, the symmetrised Cheeger's cut turns out to be the worst choice for minimising the Frobenius objective. We further obtain a $1/2$-approximation result, which reduces the original combinatorial optimisation problem to a simple linear sweep over singleton sets.

Section \ref{sec:frobeg} extends this perspective to the multi-orbit setting. We prove that orbit-averaging reduces the Frobenius distance to stationarity to order $\mathcal{O}(k)$, where $k$ is the number of orbits. This is a substantial improvement over lazy kernels $P$, whose Frobenius distance is always of order $\Omega(n)$, where $n$ is the size of $\mathcal{X}$. A hypercube example demonstrates that in the case $k=2$, using even the Cheeger's cut, which maximises (rather than minimises) the Frobenius objective can yield significant improvements over the lazy random walk sampler.

In Section \ref{sec:opt}, we develop practical algorithmic approximations by decomposing both the KL divergence and Frobenius objectives into differences of submodular functions in the two-orbit case. This structure enables the use of established submodular optimisation procedures to approximate the underlying combinatorial problems. In particular, the KL objective $ D_{\mathrm{KL}}^\pi(PG_S \| \Pi)$ admits a decomposition directly linked to the entropy of 
$\pi$ and the entropy rate of the kernel $PG_S.$

Section \ref{sec:posdef} investigates structural properties of positive definite kernels with group-averaging. We show that, for any positive definite, $\pi$-reversible $P$, if neither $P$ nor $G$ is exactly $\Pi$, it is impossible for $GPG$ or $GP$ to be $\Pi.$

Finally, Section \ref{sec:num} presents numerical experiments using the Curie–Weiss model with Glauber dynamics as a testbed. Figures \ref{fig:tvplot} and \ref{fig:tvplotGS} demonstrate that group-averaging significantly outperforms the base kernel $P$ in total variation distance, even when the partition is chosen randomly. The experiments further suggests that the proposed approximation schemes perform particularly well in skewed energy landscapes, where the objective is strongly structured and majorisation–minimisation (MM) procedures exhibit reliable descent behaviour.

\subsection{Related works}
A substantial body of work has investigated structural modifications of Markov chains to improve convergence. Classical approaches include lifting, where \cite{chen_lifting} propose enlarging the transition structure in order to accelerate mixing. More recently, systematic constructions of non-reversible samplers have been developed in both discrete and continuous settings; see, for instance, \cite{TURITSYN2011410} and \cite{bierkens_2016}. These works demonstrate that altering the structure of the transition kernel can yield substantial improvements in mixing behaviour. In particular, the proposed samplers $G_SP$ and $PG_S$ may also be viewed as non-reversible transformations of the base kernel $P$.

Beyond structural modifications, a complementary line of research focuses on tuning sampling algorithms by optimising their parameters. Adaptive MCMC methods, first proposed in \cite{haario_adaptive}, adjust proposal distributions during simulation. \cite{atchade_2005} subsequently proposed adaptive schemes that sequentially learn near-optimal parameters. From an optimisation perspective, \cite{chen_2012} and \cite{huang_2018} study the construction of ``optimal" transition kernels that minimise criteria based upon asymptotic variance and its variants. More recently, \cite{pmlr-v235-egorov24a} introduced adversarial and learning-based approaches for training transition kernels with improved mixing performance. In these settings, aspects of the sampling mechanism are treated explicitly as optimisation variables.

Group-averaging is likewise a structural modification designed to improve mixing. Building on prior work of \cite{choi2025groupaveragedmarkovchainsmixing} and \cite{GPG}, we study the optimisation problem that arises when the underlying partition is not prescribed by symmetry or model structure. In the special case of two blocks, this reduces to selecting optimal cuts under various performance metrics. By relating the resulting combinatorial optimisation problem to the induced projection chain, we develop tractable approximation algorithms aimed at circumventing the inherent difficulty of exhaustive combinatorial search. This perspective places group-averaging within the broader framework of optimisation-driven sampler design.

\section{Preliminaries and definitions}\label{sec:prelim}
Throughout this paper, we define $\mathcal{X}$ to be a finite state space, and let $\mathcal{P}(\mathcal{X})$ denote the set of all probability measures on $\mathcal{X}$ with strictly positive masses, i.e., $\mathcal{P}(\mathcal{X}) := \{ \pi : \mathcal{X} \to (0,1] : \sum_x \pi(x)=1 \}$.

For integers $a \leq b \in \mathbb{Z}$, we write $\llbracket a,b \rrbracket := \{a, a+1, \ldots, b\}$ and $\llbracket n \rrbracket := \llbracket 1,n \rrbracket$ with $n \in \mathbb{N}$. From here on, we always assume that $\mathcal{X} = \llbracket n \rrbracket$ unless otherwise specified. 

Let $f,g:\mathbb{N}\to\mathbb{R}^+$. We write $f(n)=\mathcal{O}(g(n))$ if there exist constants $c>0$ and $n_0\in\mathbb{N}$ such that $f(n)\le c\,g(n)$ for all $n\ge n_0$. We write $f(n)=\Omega(g(n))$ if there exist constants $c>0$ and $n_0\in\mathbb{N}$ such that $f(n)\ge c\,g(n)$ for all $n\ge n_0$.

Let $\ell^2(\pi)$ be the Hilbert space weighted by $\pi \in \mathcal{P}(\mathcal{X})$, with inner product defined to be 
$$\langle f,g \rangle_\pi := \sum_{x\in \mathcal{X}} f(x)g(x)\pi(x),$$
for $f,g: \mathcal{X} \to \mathbb{R}$. The $\ell^2(\pi)$-norm of $f$ is then $\|f\|_\pi = \sqrt{\langle f,f \rangle_\pi}$. We write 
$$\ell^2_0(\pi) := \{f \in \ell^2(\pi): \pi(f) = 0\}$$
as the zero-mean subspace.

Let $\mathcal{L} = \mathcal{L}(\mathcal{X})$ to be the set of all transition matrices on $\mathcal{X}$. For a given $\pi \in \mathcal{P}(\mathcal{X})$, the set of all $\pi$-stationary transition matrices shall be defined as $\mathcal{S}(\pi) \subseteq \mathcal{L}$. That is, for any $P \in \mathcal{S}(\pi),$ it must satisfy $\pi P = \pi$. Similarly, the set of all $\pi$-reversible matrices is defined as $\mathcal{L}(\pi) \subseteq \mathcal{L}$, where $P \in \mathcal{L}(\pi)$ implies that the detailed balance condition is satisfied for $P$. That is, $\pi(x) P(x,y) = \pi(y) P(y,x)$ holds for all $x,y \in \mathcal{X}$. 

For any $M \in \mathbb{R}^{n \times n}$, we define $M^*$ to be the $\ell^2(\pi)$-adjoint of $M$. In particular, for ergodic $P\in \mathcal{S}(\pi)$, $P^*$ is the time-reversal of $P$, and we have $P\in \mathcal{L}(\pi)$ if and only if $P^* = P$.

The transition matrices $P \in \mathcal{L}$ can also be viewed as an operator on $\ell^2(\pi)$. Then 
$$Pf(x) = \sum_{y \in \mathcal{X}} P(x,y) f(y)$$
is also a function in $\ell^2(\pi)$.

For any $M, N \in \mathbb{R}^{n \times n}$, we define the Frobenius inner product of $M, N$ with respect to $\pi$ to be 
$$\langle M,N \rangle_{F,\pi} := \Tr(M^*N),$$
and the induced Frobenius norm $\| M\|_{F, \pi}^2 := \langle M,M\rangle_{F,\pi}$, where $\Tr(M)$ is the trace of $M$. For clarity, we prove below that $\langle \cdot,\cdot \rangle_{F,\pi}$ defined above is indeed an inner product.

    Let $M, N, Q \in \mathbb{R}^{n \times n}$, and $a,b \in \mathbb{R}$.
    The map $\langle \cdot,\cdot \rangle_{F,\pi}: \mathbb{R}^{n \times n} \times \mathbb{R}^{n \times n} \to \mathbb{R}$ satisfies the following properties:
    \begin{itemize}
        \item Symmetry: 
        $$\langle M, N \rangle_{F, \pi} = \Tr(M^*N) = \Tr(N^*M) = \langle N,M \rangle_{F, \pi}.$$

        \item Linearity: 
        \begin{align*}
            \langle aM+bN, Q \rangle_{F, \pi} &= \Tr((aM+bN)^* Q)\\
            &= \Tr(aM^*Q) + \Tr(bN^*Q)\\
            &= a\langle M, Q \rangle_{F, \pi} + b\langle N, Q \rangle_{F, \pi}.
        \end{align*}

        \item Positive-definiteness: For $M \neq \mathbf{0}_{n \times n}$, note that 
        $$(M^*M)(x,x) = \sum_{z\in \mathcal{X}} M^*(x,z) M(z,x) = \sum_{z\in \mathcal{X}} \frac{\pi(z)}{\pi(x)} M(z,x)^2 \geq 0.$$
        It follows that 
        $$\langle M,M \rangle_{F, \pi} = \Tr(M^*M) \geq 0,$$
        and equality holds if and only if $M = \mathbf{0}_{n \times n}.$
    \end{itemize}

\subsection{Eigenvalues and Cheeger's constant}

In subsequent sections, we shall link the Frobenius norm to stationarity of $P$ to its eigenvalues and Cheeger's constant. Thus, we briefly recall these notions in this subsection.

For self-adjoint $M \in \mathbb{R}^{n \times n}$, we write its eigenvalues $(\lambda_i = \lambda_i(M))_{i=1}^n$ in non-increasing order counted with multiplicities, that is,
\begin{align*}
    \lambda_1(M) \geq \lambda_2(M) \geq \ldots \geq \lambda_n(M).
\end{align*}
For $P \in \mathcal{L}(\pi)$, we write $\gamma(P) := 1 - \lambda_2(P)$ to be the right spectral gap of $P$.

The classical Cheeger's constant of $P$ is defined to be
\begin{align*}
    \Phi^*(P) := \min_{S;~0 < \pi(S) \leq \frac{1}{2}} \dfrac{\sum_{x \in S;\,y \in S'} \pi(x)P(x,y)}{\pi(S)},
\end{align*}
and we call $S^* \in \argmin_{S;~0 < \pi(S) \leq \frac{1}{2}} \frac{\sum_{x \in S;\,y \in S'} \pi(x)P(x,y)}{\pi(S)}$ a Cheeger's cut of $P$, where we write $S'$ to be the complement of the set $S$. Similarly, the symmetrised Cheeger's constant of $P$ introduced in Section 4.1 of \cite{Montenegro} is defined to be
\begin{align*}
    \phi^*(P) := \min_{S;~0 < \pi(S) < 1} \dfrac{\sum_{x \in S;\,y \in S'} \pi(x)P(x,y)}{\pi(S)\pi(S')},
\end{align*}
and we call $U^* \in \argmin_{S;~0 < \pi(S) < 1} \frac{\sum_{x \in S;\,y \in S'} \pi(x)P(x,y)}{\pi(S)\pi(S')}$ a symmetrised Cheeger's cut of $P$. 

The classical Cheeger's inequality (see \cite{roch_mdp_2024} Theorem 5.3.5 for a proof) gives, for ergodic $P \in \mathcal{L}(\pi)$, that
\begin{align}\label{eq:Cheegerineq}
    \dfrac{\Phi^{*2}(P)}{2} \leq \gamma(P) \leq 2 \Phi^*(P).
\end{align}
It is also immediate to see that
\begin{align*}
    \phi^*(P) \leq 2 \Phi^*(P).
\end{align*}

\subsection{Submodular functions, KL divergence and entropy}

For any $P, Q \in \mathcal{S}(\pi)$, the KL divergence of $P$ from $Q$ weighted by $\pi$ is defined as
\begin{equation*}
    D_{\mathrm{KL}}^\pi (P \| Q) := \sum_{x,y \in \mathcal{X}} \pi(x) P(x,y) \log\bigg(\frac{P(x,y)}{Q(x,y)}\bigg),
\end{equation*}
where by convention we take $0 \log (0/a) := 0$ for $a \in [0,1]$. For any probability distribution $\mu, \nu$ in $\mathcal{P}(\mathcal{X})$, the KL divergence of $\mu$ from $\nu$ is defined to be
\begin{align*}
    D_{\mathrm{KL}}(\mu \| \nu) := \sum_{x \in \mathcal{X}} \mu(x) \log \left(\dfrac{\mu(x)}{\nu(x)}\right).
\end{align*}

Further, for any distribution $\pi$ on $\mathcal{X}$, let 
$$H(\pi) := -\sum_{x\in \mathcal{X}} \pi(x) \log \pi(x)$$
be the Shannon entropy of $\pi$ on $\mathcal{X}$.

Similarly, we write 
$$H_\pi(P) := -\sum_{x, y \in \mathcal{X}} \pi(x)P(x,y)\log P(x,y)$$
to be the entropy rate of any ergodic, $\pi$-stationary $P \in \mathcal{S}(\pi)$.

We now give a brief introduction of sub/supermodular functions. Let $f: 2^V \to \mathbb{R}$ be a set function where $f(S)$ is real-valued for any $S \subseteq V.$ We say a set function $f$ is submodular if for every $A, B \subseteq V,$
$$f(A \cap B) + f(A \cup B) \leq f(A) + f(B).$$ A function $f$ is called supermodular if $-f$ is submodular. 

If equality holds for all $A, B$, then the function $f$ is modular. A modular function is thus both supermodular and submodular. Other equivalent definitions of these functions can be found in \cite{Krause_Golovin_2014}.

\subsection{Gibbs kernel}
Consider a partition of the state space $\mathcal{X} = \bigsqcup_{i=1}^k \mathcal{O}_i$, where $(\mathcal{O}_i)_{i=1}^k$ is the induced orbits obtained from an appropriate choice of group action acting on $\mathcal{X}$. Note that any partition of $\mathcal{X}$ can in fact be induced by a product of cyclic groups on the sets $\mathcal{O}_i$.

From \cite{GPG}, we define the Gibbs kernel $G \in \mathcal{L}(\pi)$ to be 
\begin{equation*} 
G(x,y) = \begin{cases}
\frac{\pi(y)}{\pi(\mathcal{O}(x))},& \mathrm{for}\ y \in \mathcal{O}(x),\\
0, & \mathrm{otherwise},
\end{cases}
\end{equation*}
where $\pi(\mathcal{O}(x)) := \sum_{z \in \mathcal{O}(x)} \pi(z)$ and $\mathcal{O}(x)$ is defined to be the orbit that $x$ belongs, that is, $\mathcal{O}(x) = \mathcal{O}_i$ with $i$ satisfying $x \in \mathcal{O}_i$. Note that $\mathcal{O}(x)$ should not be confused with the big O notation.

From its definition, $G$ is $\pi$-reversible and idempotent, that is, $G^2 = G.$ Intuitively, the kernel $G$ corresponds to resampling from the target distribution $\pi$ restricted to the orbit $\mathcal{O}(x)$ when the initial state is $x$. 

The main motivation behind introducing $G$ is that it facilitates within orbit transition, which may be difficult for a given $\pi$-stationary sampler $P$ to achieve. In particular, appropriate choices of $G$ can help samplers traverse between modes more efficiently, avoiding situations where a sampler gets stuck in a single region of the state space.

\section{Relationship between $GPG$ and $\overline{P}$ in KL divergence} \label{sec:KL}

In this section, we will show that the KL divergence to stationarity for $GPG$ depends only on the projection chain $\overline{P}$ for any choice of partition $\mathcal{X} = \bigsqcup_{i=1}^k \mathcal{O}_i.$

Recall from \cite{Jerrum_2004}, the projection chain $\overline{P}$ induced by the partition $(\mathcal{O}_i)_{i=1}^k$, with $\overline{P}: \llbracket k \rrbracket \times \llbracket k \rrbracket \to [0,1]$ is defined to be

\begin{align} \label{eq:Pbar}
    \overline{P}(i,j) := \frac{1}{\pi(\mathcal{O}_i)} \sum_{\substack{x \in \mathcal{O}_i \\ y \in \mathcal{O}_j}} \pi(x) P(x,y).
\end{align}
It follows that $\overline{P}$ has stationary distribution $\overline{\pi} = (\pi(\mathcal{O}_1), \dots, \pi(\mathcal{O}_k)).$ Note that we similarly define $\overline{\Pi}$ to be the transition matrix whose rows are equals to $\overline{\pi}$.

\begin{proposition}\label{prop:GPGoverlinePrs}
    For any $P \in \mathcal{S}(\pi)$ and any partition $\mathcal{X} = \bigsqcup_{i=1}^k \mathcal{O}_i$, it holds that for $l \in \mathbb{N}$
    \begin{equation*}
        (GPG)^l (x,y) = \overline{P}^l (i,j) \frac{\pi(y)}{\pi(\mathcal{O}_j)},
    \end{equation*}
    where $x \in \mathcal{O}_i$, $y \in \mathcal{O}_j.$
\end{proposition}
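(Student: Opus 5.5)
We first treat the case $l=1$ by a direct computation and then induct on $l$ using the idempotence $G^2=G$.

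For $l=1$, take $x \in \mathcal{O}_i$ and $y \in \mathcal{O}_j$. Since $G(x,z)$ vanishes unless $z \in \mathcal{O}_i$, and $G(w,y)$ vanishes unless $w \in \mathcal{O}_j$, we have
\begin{align*}
    (GPG)(x,y) &= \sum_{z \in \mathcal{O}_i} \sum_{w \in \mathcal{O}_j} \frac{\pi(z)}{\pi(\mathcal{O}_i)} P(z,w) \frac{\pi(y)}{\pi(\mathcal{O}_j)} \\
    &= \frac{\pi(y)}{\pi(\mathcal{O}_j)} \cdot \frac{1}{\pi(\mathcal{O}_i)} \sum_{z \in \mathcal{O}_i,\, w \in \mathcal{O}_j} \pi(z) P(z,w).
\end{align*}
By \eqref{eq:Pbar}, the last factor is exactly $\overline{P}(i,j)$, which gives the claim for $l=1$.

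For the inductive step, idempotence gives $(GPG)^{l+1} = (GPG)^l (GPG)$. Assume the claim holds for $l$. Using the $l=1$ formula for the second factor, we compute
\begin{align*}
    (GPG)^{l+1}(x,y) &= \sum_{m=1}^k \sum_{z \in \mathcal{O}_m} \overline{P}^l(i,m) \frac{\pi(z)}{\pi(\mathcal{O}_m)} \, \overline{P}(m,j) \frac{\pi(y)}{\pi(\mathcal{O}_j)}.
\end{align*}
The inner sum $\sum_{z \in \mathcal{O}_m} \pi(z)/\pi(\mathcal{O}_m)$ equals $1$. What remains is $\sum_m \overline{P}^l(i,m)\overline{P}(m,j) = \overline{P}^{l+1}(i,j)$ times $\pi(y)/\pi(\mathcal{O}_j)$, which completes the induction.

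Note that stationarity of $P$ is not actually needed for this identity. It only guarantees that $\overline{P}$ is $\overline{\pi}$-stationary, which matters for later use. There is no real obstacle here; the only point requiring care is the bookkeeping of block indices when splitting sums over $\mathcal{X}$ into sums over orbits.
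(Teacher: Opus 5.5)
Your proof is correct and follows the paper's argument essentially verbatim: a direct block computation for $l=1$, then induction via $(GPG)^{l+1}=(GPG)^l(GPG)$, splitting the sum over orbits and using $\sum_{z\in\mathcal{O}_m}\pi(z)/\pi(\mathcal{O}_m)=1$. One small correction: that factorisation is just the definition of a matrix power, so idempotence $G^2=G$ plays no role, and, as you correctly note, neither does stationarity of $P$.
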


\begin{proof}
    For $l = 1$, when  $x \in \mathcal{O}_i$, $y \in \mathcal{O}_j,$
    $$GPG(x,y) = \frac{\pi(y)}{\pi(\mathcal{O}_i)\pi(\mathcal{O}_j)} \sum_{\substack{z \in \mathcal{O}_i\\ w \in \mathcal{O}_j}} \pi(z) P(z,w) = \overline{P} (i,j) \frac{\pi(y)}{\pi(\mathcal{O}_j)}.$$
    Suppose then that the claim holds for all $m \leq  l$, where $l \geq 1$ is fixed. 
    It follows that 
    \begin{align*}
        (GPG)^{l+1}(x,y) &= \sum_{z \in \mathcal{X}} (GPG)^l(x,z)\cdot GPG(z,y)\\
        &= \sum_{h=1}^k \sum_{z \in \mathcal{O}_h} \overline{P}^l (i,h) \frac{\pi(z)}{\pi(\mathcal{O}_h)} \cdot \overline{P}(h,j) \frac{\pi(y)}{\pi(\mathcal{O}_j)}\\
        &= \sum_{h=1}^k \overline{P}^l (i,h)\cdot \overline{P}(h,j) \sum_{z \in \mathcal{O}_h} \frac{\pi(y)\pi(z)}{\pi(\mathcal{O}_h)\pi(\mathcal{O}_j)}\\
        &= \overline{P}^{l+1}(i,j) \frac{\pi(y)}{\pi(\mathcal{O}_j)}.
    \end{align*}
    By induction, we have the result as claimed.
\end{proof}

\begin{proposition} \label{prop:GPGoverlineP KL}
    For any $P \in \mathcal{S}(\pi)$, $\mathcal{X} = \bigsqcup_{i=1}^k \mathcal{O}_i$ and $l \in \mathbb{N}$, we have
    $$D_{\mathrm{KL}}^\pi \left((GPG)^l \| \Pi \right) = D_{\mathrm{KL}}^{\overline\pi} \left( \overline{P}^l \| \overline{\Pi}\right).$$
\end{proposition}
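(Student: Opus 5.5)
The plan is to substitute the explicit form of $(GPG)^l$ from Proposition \ref{prop:GPGoverlinePrs} into the definition of $D_{\mathrm{KL}}^\pi$ and then collapse the double sum over $\mathcal{X}\times\mathcal{X}$ into a sum over blocks. Write $x \in \mathcal{O}_i$ and $y \in \mathcal{O}_j$. By Proposition \ref{prop:GPGoverlinePrs} we have
\begin{align*}
    \frac{(GPG)^l(x,y)}{\Pi(x,y)} = \frac{\overline{P}^l(i,j)\,\pi(y)/\pi(\mathcal{O}_j)}{\pi(y)} = \frac{\overline{P}^l(i,j)}{\overline{\pi}(j)} = \frac{\overline{P}^l(i,j)}{\overline{\Pi}(i,j)}.
\end{align*}
So the log-ratio inside the KL divergence depends only on the block indices $(i,j)$. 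This is the key observation.

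Next I split the sum as $\sum_{x,y} = \sum_{i,j}\sum_{x\in\mathcal{O}_i}\sum_{y \in \mathcal{O}_j}$. The summand is
\[
\pi(x)\,\overline{P}^l(i,j)\,\frac{\pi(y)}{\pi(\mathcal{O}_j)}\,\log\frac{\overline{P}^l(i,j)}{\overline{\Pi}(i,j)}.
\]
Summing over $y \in \mathcal{O}_j$ gives the factor $\sum_{y\in\mathcal{O}_j}\pi(y)/\pi(\mathcal{O}_j) = 1$. Summing over $x \in \mathcal{O}_i$ gives the factor $\pi(\mathcal{O}_i) = \overline{\pi}(i)$. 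What remains is
\[
\sum_{i,j}\overline{\pi}(i)\,\overline{P}^l(i,j)\log\frac{\overline{P}^l(i,j)}{\overline{\Pi}(i,j)} = D_{\mathrm{KL}}^{\overline{\pi}}\bigl(\overline{P}^l\,\|\,\overline{\Pi}\bigr),
\]
which is the claimed identity.

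The only point requiring care is the convention $0\log(0/a)=0$. If $\overline{P}^l(i,j)=0$, then every entry $(GPG)^l(x,y)$ with $x\in\mathcal{O}_i$, $y\in\mathcal{O}_j$ vanishes as well, so both sides contribute zero on that block. Conversely, since $\pi>0$, $(GPG)^l(x,y)=0$ forces $\overline{P}^l(i,j)=0$, so the zero patterns match exactly. All denominators are positive because $\pi\in\mathcal{P}(\mathcal{X})$, which gives $\overline{\pi}(j)>0$.

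Strictly speaking, the divergence $D_{\mathrm{KL}}^{\overline{\pi}}$ was defined for kernels in $\mathcal{S}(\overline{\pi})$. I would therefore note that $\overline{P}$, and hence $\overline{P}^l$, is $\overline{\pi}$-stationary, as recalled after \eqref{eq:Pbar}. There is no real obstacle beyond this bookkeeping.
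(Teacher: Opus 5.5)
Your proposal is correct and follows the paper's proof essentially verbatim: you substitute the block formula from Proposition \ref{prop:GPGoverlinePrs}, note that the log-ratio depends only on $(i,j)$, and collapse the sums over $y\in\mathcal{O}_j$ and $x\in\mathcal{O}_i$. Your remarks on the $0\log 0$ convention and on the $\overline{\pi}$-stationarity of $\overline{P}^l$ are valid bookkeeping that the paper leaves implicit.
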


\begin{proof}
    By Proposition \ref{prop:GPGoverlinePrs}, 
    \begin{align*}
        D_{\mathrm{KL}}^\pi \left((GPG)^l \| \Pi \right) &= \sum_{x,y \in \mathcal{X}} \pi(x) (GPG)^l(x,y) \log \frac{(GPG)^l(x,y)}{\pi(y)}\\
        &= \sum_{i,j=1}^k \sum_{\substack{x \in \mathcal{O}_i\\ y \in \mathcal{O}_j}} \pi(x) \overline{P}^l(i,j) \frac{\pi(y)}{\pi(\mathcal{O}_j)} \log \frac{\overline{P}^l(i,j)}{\pi(\mathcal{O}_j)}\\
        &= \sum_{i,j=1}^k \pi(\mathcal{O}_i) \overline{P}^l(i,j) \log \frac{\overline{P}^l(i,j)}{\pi(\mathcal{O}_j)}\\
        &= D_{\mathrm{KL}}^{\overline\pi} \left( \overline{P}^l \| \overline{\Pi}\right).
    \end{align*}
\end{proof}
We remark that the above proposition is a generalisation of Proposition 6.3 of \cite{GPG}, where the case $l=1$ was presented.

The results of Proposition \ref{prop:GPGoverlineP KL} motivates the study of the projection chain $\overline{P}$ over the full kernel $GPG$. In particular, the convergence rate of $(GPG)^l$ to stationarity in KL divergence is exactly that of $\overline{P}^l$ to $\overline{\Pi}$. When we partition the state space into exactly two blocks, $\overline{P}$ is then a $2 \times 2$ matrix which makes it easy to analyse its properties.

\subsection{Decay rate of the KL divergence of $(GPG)^l$ from $\Pi$ and the case of $k=2$ orbits}

In this subsection, we bound the decay rate of the KL divergence from $\Pi$ to $(GPG)^l$ in terms of the log-Sobolev constant of $\overline{P}^2$.

First, we define the log-Sobolev constant of a sampler $P \in \mathcal{S}(\pi)$ to be
\begin{equation} \label{eq:alpha}
    \alpha(P) := \min \left\{ \frac{\langle (I - P)f, f \rangle_\pi}{\mathscr{L}(f)} : \mathscr{L}(f) \neq 0 \right\},
\end{equation}
where the quantity $\mathscr{L}(f)$ is defined to be 
$$\mathscr{L}(f) := \sum_{x\in \mathcal{X}} |f(x)|^2 \log\left(\frac{|f(x)|^2}{\|f\|_\pi^2}  \right)\pi(x).$$
\begin{proposition} \label{prop:logsobolevbound}
    For $P \in \mathcal{L}(\pi)$, $\mathcal{X} = \bigsqcup_{i=1}^k \mathcal{O}_i$ and $l \in \mathbb{N}$, we have
    $$D_{\mathrm{KL}}^\pi \left((GPG)^l \| \Pi \right) = D_{\mathrm{KL}}^{\overline\pi} \left( \overline{P}^l \| \overline{\Pi}\right) \leq \left(1-\alpha\left(\overline{P}^2\right)\right)^l H(\overline{\pi}).$$
\end{proposition}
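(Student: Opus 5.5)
The equality in the statement is Proposition \ref{prop:GPGoverlineP KL}, so only the inequality $D_{\mathrm{KL}}^{\overline\pi}(\overline{P}^l \| \overline{\Pi}) \leq (1-\alpha(\overline{P}^2))^l H(\overline{\pi})$ needs proof. The plan is to rewrite the left side as an average of relative entropies of densities started at point masses, and then contract entropy one step at a time.

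\textbf{Step 1 (the projected chain is reversible).} Since $P \in \mathcal{L}(\pi)$, I would check from \eqref{eq:Pbar} that $\overline{P}$ is $\overline{\pi}$-reversible:
\begin{align*}
\overline{\pi}(i)\overline{P}(i,j) = \sum_{x \in \mathcal{O}_i,\, y \in \mathcal{O}_j} \pi(x)P(x,y),
\end{align*}
and this expression is symmetric in $(i,j)$ by detailed balance. Consequently $\overline{P}^* = \overline{P}$ and $\overline{P}^2 = \overline{P}\,\overline{P}^*$ on $\ell^2(\overline{\pi})$.

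\textbf{Step 2 (rewriting as an average of entropies).} For each $i$, let $h_i := \mathbf{1}_{\{i\}}/\overline{\pi}(i)$ be the density of $\delta_i$ with respect to $\overline{\pi}$. For $f \geq 0$ with $\overline{\pi}(f) = 1$, write $\mathrm{Ent}(f) := \sum_j \overline{\pi}(j) f(j)\log f(j)$. The $i$-th row $\delta_i \overline{P}^l$ has density $(\overline{P}^*)^l h_i = \overline{P}^l h_i$ with respect to $\overline{\pi}$. Hence
\begin{align*}
D_{\mathrm{KL}}^{\overline\pi}(\overline{P}^l \| \overline{\Pi}) = \sum_i \overline{\pi}(i)\, \mathrm{Ent}(\overline{P}^l h_i).
\end{align*}
At time zero, $\mathrm{Ent}(h_i) = -\log \overline{\pi}(i)$, so $\sum_i \overline{\pi}(i)\,\mathrm{Ent}(h_i) = H(\overline{\pi})$. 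It therefore suffices to prove the one-step contraction
\begin{align*}
\mathrm{Ent}(\overline{P} f) \leq \bigl(1-\alpha(\overline{P}\,\overline{P}^*)\bigr)\,\mathrm{Ent}(f)
\end{align*}
for every probability density $f$, and then iterate it $l$ times and average over $i$ with weights $\overline{\pi}(i)$.

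\textbf{Step 3 (one-step entropy contraction; the main obstacle).} This step carries the real content. It is the discrete-time entropy contraction in terms of the log-Sobolev constant of $KK^*$ due to Miclo (see also Diaconis--Saloff-Coste). I would first try to simply invoke that result with $K = \overline{P}$; if a self-contained argument is wanted, I would attempt the following route, taking care with which of $KK^*$ or $K^*K$ appears (they coincide here by Step 1).
\begin{enumerate}
\item Show that the entropy decrement $\mathrm{Ent}(f) - \mathrm{Ent}(\overline{P} f)$ is bounded below by a Dirichlet form $\langle (I - \overline{P}^2) g, g\rangle_{\overline{\pi}}$ for a suitable $g$ built from $f$, such as $g = \sqrt{f}$ or $g = \sqrt{\overline{P} f}$. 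The expected tools are convexity of $u \mapsto u\log u$ and a variational, Donsker--Varadhan-type representation of entropy.
\item Apply the definition \eqref{eq:alpha} of $\alpha(\overline{P}^2)$ to that Dirichlet form. Note that $\mathscr{L}(\sqrt{f}) = \mathrm{Ent}(f)$ when $\overline{\pi}(f) = 1$. This gives decrement $\geq \alpha(\overline{P}^2)\,\mathrm{Ent}(\cdot)$, evaluated either at $f$ or at $\overline{P} f$.
\item Rearrange. If the bound comes out at $\overline{P} f$, one obtains the slightly stronger factor $1/(1+\alpha)$, which implies $1-\alpha$.
\end{enumerate}
Matching the functional in the decrement to the one in \eqref{eq:alpha} is where I expect the difficulty to lie.
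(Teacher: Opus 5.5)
Your plan matches the paper's proof. The paper likewise uses $\overline{\pi}$-reversibility of $\overline{P}$ to identify $\alpha(\overline{P}\,\overline{P}^*)$ with $\alpha(\overline{P}^2)$, applies the Diaconis--Saloff-Coste entropy contraction to each point mass, giving $D_{\mathrm{KL}}(\delta_i \overline{P}^l \| \overline{\pi}) \leq (1-\alpha(\overline{P}^2))^l(-\log \overline{\pi}(i))$, and then averages with weights $\overline{\pi}(i)$ to produce $H(\overline{\pi})$.

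One correction to your optional self-contained route, in item 3 of Step 3. Since $(1-\alpha)(1+\alpha) \leq 1$, the factor $1/(1+\alpha)$ is \emph{weaker} than $1-\alpha$, not stronger. A decrement bounded below by $\alpha\,\mathrm{Ent}(\overline{P}f)$ would therefore only give the rate $(1+\alpha)^{-l}$, not the stated $(1-\alpha)^l$. You need the decrement bounded below by $\alpha\,\mathrm{Ent}(f)$, which is what the cited contraction result supplies.
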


\begin{proof}
    The equality comes from Proposition \ref{prop:GPGoverlineP KL}. Note that for $P \in \mathcal{L}(\pi)$, $\overline{P}$ must be $\overline{\pi}$-reversible, and hence we have $$\alpha\left(\overline{P}\cdot \overline{P}^*\right) = \alpha\left(\overline{P}^2\right).$$
    Denote $\delta_i$ to be the Dirac mass at $i \in \llbracket k \rrbracket$. Note that we have
    \begin{align*}
        D_{\mathrm{KL}}(\delta_i \overline{P}^l \| \overline{\pi}) \leq \left(1-\alpha\left(\overline{P}^2\right)\right)^l D_{\mathrm{KL}}(\delta_i \| \overline{\pi}) = - \left(1-\alpha\left(\overline{P}^2\right)\right)^l  \log (\overline{\pi}(i)).
    \end{align*}
    where the inequality follows from a result of \cite[Remark on Page 725]{diaconis_logsobolev}. Multiplying by $\overline{\pi}(i)$ and summing $i$ from $1$ to $k$ gives the desired result:
    \begin{align*}
        D_{\mathrm{KL}}^{\overline\pi} \left( \overline{P}^l \| \overline{\Pi}\right) &= \sum_{i=1}^k \overline{\pi}(i) D_{\mathrm{KL}}(\delta_i \overline{P}^l \| \overline{\pi}) \leq \left(1-\alpha\left(\overline{P}^2\right)\right)^l H(\overline{\pi}).
    \end{align*}
\end{proof}

We now consider the special case where $\mathcal{X} = S \sqcup S',\ \mathcal{O}_1 = S,\ \mathcal{O}_2 = S'$. Under this setting, as $\overline{P}$ is a two-state transition matrix, its log-Sobolev constant is known exactly. This allows us to give a decay rate of the KL divergence $D_{\mathrm{KL}}^{\overline\pi} \left( \overline{P}^l \| \overline{\Pi}\right)$ and consequently $D_{\mathrm{KL}}^\pi \left((GPG)^l \| \Pi \right)$ by Proposition \ref{prop:GPGoverlineP KL}.

\begin{corollary}
    Suppose $\mathcal{X} = S \sqcup S',\ \mathcal{O}_1 = S,\ \mathcal{O}_2 = S'$, $\overline{\pi} = (\pi(S), \pi(S'))$ with $\pi(S) \leq \pi(S')$. Then the log-Sobolev constant $\alpha\left(\overline{P}^2\right)$ in Proposition \ref{prop:logsobolevbound} can be written as
    $$\alpha\left(\overline{P}^2\right) = \frac{\overline{P}^2(1,2)(1 - 2\pi(S))}{\pi(S')\log(\pi(S')/\pi(S))},$$
    shown in Corollary A.3 of \cite{diaconis_logsobolev}, and hence
    \begin{align*}
       D_{\mathrm{KL}}^\pi \left((GPG)^l \| \Pi \right) \leq \left(1 -  \frac{\overline{P}^2(1,2)(1 - 2\pi(S))}{\pi(S')\log(\pi(S')/\pi(S))}\right)^l \log 2,
    \end{align*}
    where we use $H(\overline{\pi}) \leq \log 2$.
\end{corollary}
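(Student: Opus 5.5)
The corollary follows by specialising Proposition~\ref{prop:logsobolevbound} to $k=2$. There are three ingredients: the closed form of the log-Sobolev constant of a two-state chain, the identification of $\overline{P}^2$ as such a chain with the right stationary law, and the bound $H(\overline{\pi})\le \log 2$.

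\textbf{Step 1: $\overline{P}^2$ is a reversible two-state chain.} Since $P\in\mathcal{L}(\pi)$, the projection $\overline{P}$ is $\overline{\pi}$-reversible, as noted in the proof of Proposition~\ref{prop:logsobolevbound}. Hence $\overline{P}^2$ is a $\overline{\pi}$-reversible transition matrix on $\{1,2\}$ with stationary law $\overline{\pi}=(\pi(S),\pi(S'))$. Any such chain is determined by its off-diagonal entry $\overline{P}^2(1,2)$, because detailed balance gives
\[
\overline{P}^2(2,1)=\pi(S)\,\overline{P}^2(1,2)/\pi(S').
\]

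\textbf{Step 2: the log-Sobolev constant.} Next I invoke Corollary A.3 of \cite{diaconis_logsobolev}. It states that for a two-point chain $K$ with $K(1,2)=a$, $K(2,1)=b$, and stationary law $(p,q)=(b,a)/(a+b)$ with $p\le q$, the log-Sobolev constant is
\[
\alpha(K)=\frac{(a+b)(q-p)}{\log(q/p)}.
\]
Write $a=\overline{P}^2(1,2)$. Then $a+b=a\,(1+p/q)=a/q$, and $q-p=1-2\pi(S)$. Substituting gives exactly the stated expression
\[
\frac{\overline{P}^2(1,2)\,(1-2\pi(S))}{\pi(S')\log(\pi(S')/\pi(S))}.
\]
The main care point is convention matching: the paper's $\alpha$ in \eqref{eq:alpha} uses the Dirichlet form $\langle (I-P)f,f\rangle_\pi$ and the same entropy functional $\mathscr{L}(f)$ as in \cite{diaconis_logsobolev}, so the constant transfers verbatim.

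\textbf{Step 3: the edge case and the bound.} When $\pi(S)=\pi(S')=\tfrac12$, the expression is read as its limit. The limit of $(q-p)/\log(q/p)$ as $p\to q$ is $q=\tfrac12$, which yields $\alpha=\overline{P}^2(1,2)$, in line with Diaconis--Saloff-Coste. Finally, I plug $\alpha(\overline{P}^2)$ into Proposition~\ref{prop:logsobolevbound}. The entropy of a distribution on two points is at most $\log 2$, and since $\alpha\le 1$ the factor $1-\alpha\ge0$. Together these give the displayed decay bound.
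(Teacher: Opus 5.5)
Your proposal is correct and follows the same route as the paper, which specialises Proposition~\ref{prop:logsobolevbound} to $k=2$, takes the two-point log-Sobolev constant from Corollary A.3 of \cite{diaconis_logsobolev}, and bounds $H(\overline{\pi})\le\log 2$; your detailed-balance computation $a+b=a/\pi(S')$ is exactly what converts the two-point formula into the stated expression. Your remarks on the limit at $\pi(S)=1/2$ and on $1-\alpha\ge 0$ are refinements the paper leaves implicit, and you left the claim $\alpha\le 1$ unjustified; it follows from $(q-p)/\log(q/p)\le (p+q)/2$, which gives $\alpha\le (a+b)/2\le 1$.
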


Writing 
\begin{align} \label{eq:g(S)}
    g(S,P) := \frac{1}{\pi(S)\pi(S')} \sum_{\substack{x \in S\\ y \in S'}} \pi(x)P(x,y),
\end{align}
we note that
\begin{align*}
     \frac{\overline{P}^2(1,2)}{\pi(S')} &= \frac{1}{\pi(S')}  \overline{P}(1,2) \left(\overline{P}(1,1) + \overline{P}(2,2)\right)\\
     &= g(S,P)\left(2 - \overline{P}(1,2)\left(1+\frac{\pi(S)}{\pi(S')}\right)\right)\\
     &= g(S,P)(2 - g(S,P)),
\end{align*}
where the second line follows from $\overline{\pi}$-reversibility of $\overline{P}$.

\subsection{Decay rate of the KL divergence of $(GP)^l$ from $\Pi$ and $(PG)^l$}
Here, we show how the KL divergence of $(GP)^l$ and $(PG)^l$ from $\Pi$ can be related closely with that of $(GPG)^l$.

\begin{proposition}\label{prop:GPGPGKL}
    Given $P \in \mathcal{L}(\pi)$, $\mathcal{X} = \bigsqcup_{i=1}^k \mathcal{O}_i$ and any Gibbs orbit kernel $G$, for $l \in \mathbb{N}$ with $l \geq 2$, 
    $$D_{\mathrm{KL}}^\pi \left((GPG)^l \| \Pi \right) \leq D_{\mathrm{KL}}^\pi \left((PG)^l \| \Pi \right) = D_{\mathrm{KL}}^\pi \left((GP)^l \| \Pi \right) \leq D_{\mathrm{KL}}^\pi \left((GPG)^{l-1} \| \Pi \right).$$
\end{proposition}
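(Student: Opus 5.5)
We plan to reduce everything to the data-processing inequality for the $\pi$-weighted KL divergence between kernels, combined with the algebraic identities $G^2=G$, $G\Pi=\Pi G=\Pi$ and $P\Pi=\Pi P=\Pi$ (the last since $P\in\mathcal{S}(\pi)$ and the rows of $\Pi$ equal $\pi$). The key lemma we would first record is the following: for stochastic $A,B$ and $Q\in\mathcal{S}(\pi)$,
\begin{align*}
D_{\mathrm{KL}}^\pi(AQ\|BQ)\le D_{\mathrm{KL}}^\pi(A\|B),\qquad D_{\mathrm{KL}}^\pi(QA\|QB)\le D_{\mathrm{KL}}^\pi(A\|B).
\end{align*}
The first follows row by row: $\delta_xAQ$ and $\delta_xBQ$ are images of $\delta_xA,\delta_xB$ under the same Markov kernel $Q$. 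The second follows from joint convexity of KL: $\delta_xQA=\sum_z Q(x,z)\delta_zA$, so $D_{\mathrm{KL}}(\delta_xQA\|\delta_xQB)\le\sum_zQ(x,z)D_{\mathrm{KL}}(\delta_zA\|\delta_zB)$. Weighting by $\pi(x)$ and using $\pi Q=\pi$ then gives the claim. Since $G\in\mathcal{S}(\pi)$ as well, both versions apply with $Q\in\{P,G\}$.

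For the middle equality we would write $(GP)^l=G(PG)^{l-1}P$ and $(PG)^l=P(GP)^{l-1}G$. Applying the lemma with right multiplication by $P$ and left multiplication by $G$ gives
\begin{align*}
D_{\mathrm{KL}}^\pi((GP)^l\|\Pi)=D_{\mathrm{KL}}^\pi(G(PG)^{l-1}P\|G\Pi P)\le D_{\mathrm{KL}}^\pi((PG)^{l-1}\|\Pi).
\end{align*}
This only drops one power, so we instead seek an exact identity. Here reversibility enters: $((PG)^l)^*=(GP)^l$ for $P,G\in\mathcal{L}(\pi)$. We would check directly that $D_{\mathrm{KL}}^\pi(M\|\Pi)=D_{\mathrm{KL}}^\pi(M^*\|\Pi)$ for any $M\in\mathcal{S}(\pi)$. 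Indeed $\pi(x)M(x,y)=\pi(y)M^*(y,x)$, and the log term $\log\frac{M(x,y)}{\pi(y)}=\log\frac{\pi(x)M(x,y)}{\pi(x)\pi(y)}$ is symmetric under swapping $(x,y)$ together with $M\leftrightarrow M^*$. Hence the KL equals the mutual-information-type quantity $\sum\pi(x)M(x,y)\log\frac{\pi(x)M(x,y)}{\pi(x)\pi(y)}$, which is invariant under transposition of the joint law. This gives the equality.

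For the outer inequalities we use idempotence. First, $(GPG)^l=G(PG)^l$, so left multiplication by $G$ together with $G\Pi=\Pi$ yields $D_{\mathrm{KL}}^\pi((GPG)^l\|\Pi)\le D_{\mathrm{KL}}^\pi((PG)^l\|\Pi)$. Second, $(GP)^l=(GPG)^{l-1}\,GP$ because $(GPG)^{l-1}=G(PG)^{l-2}PG$ and $(GP)^l=G(PG)^{l-1}P=(GPG)^{l-1}P$, the last step using $G^2=G$. Right multiplication by $P$ (or by $GP$) and $\Pi P=\Pi$ give $D_{\mathrm{KL}}^\pi((GP)^l\|\Pi)\le D_{\mathrm{KL}}^\pi((GPG)^{l-1}\|\Pi)$; this is where $l\ge2$ is needed.

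The main obstacle is bookkeeping: verifying the two factorisations $(GPG)^l=G(PG)^l$ and $(GP)^l=(GPG)^{l-1}P$ via $G^2=G$, and confirming that the weighting in the left-multiplication data-processing step is exactly $\pi$ because $\pi G=\pi$ and $\pi P=\pi$. The adjoint-invariance identity is the only place reversibility is used.
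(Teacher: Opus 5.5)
Your proposal is correct and follows the paper's structure: data processing gives the two outer inequalities, and adjoint invariance of $D^\pi_{\mathrm{KL}}(\cdot\|\Pi)$ (the bisection property with $Q=\Pi$) gives the equality, except that you prove these ingredients directly (row-wise data processing, joint convexity with $\pi Q=\pi$, and the mutual-information form) where the paper cites them. The one small difference is the rightmost inequality: you factor $(GP)^l=(GPG)^{l-1}P$ and use right multiplication, whereas the paper factors $(PG)^l=P(GPG)^{l-1}$ and uses the weak left-multiplication data-processing inequality; the two are mirror images, linked by the middle equality.
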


\begin{proof}
    Proposition 5.3 in \cite{GPG} shows that for any $\pi$-stationary $P$,
    $$D_{KL}^\pi(P \| \Pi) \geq D_{KL}^\pi(GPG \| \Pi).$$
    The left-most inequality is then a direct consequence of the above result since
    \begin{align*}
        D_{\mathrm{KL}}^\pi \left((PG)^l \| \Pi \right) \geq D_{\mathrm{KL}}^\pi \left(G(PG)^lG \| \Pi \right) = D_{\mathrm{KL}}^\pi \left((GPG)^l \| \Pi \right).
    \end{align*}

    The right-most inequality can be derived from 
    $$D_{\mathrm{KL}}^\pi \left((PG)^l \| \Pi \right) = D_{\mathrm{KL}}^\pi \left(P(GPG)^{l-1} \| \Pi \right) \leq D_{\mathrm{KL}}^\pi \left((GPG)^{l-1} \| \Pi \right),$$
    where the last inequality follows from the weak data-processing inequality of Markov chains in Proposition 3.9 of \cite{wang2023informationdivergencesmarkovchains}.
    
    Finally, the equality is given by the bisection property, for $P,Q \in \mathcal{S}(\pi)$,
    $$D_{\mathrm{KL}}^\pi(P \| Q) = D_{\mathrm{KL}}^\pi(P^* \| Q^*),$$
    as given in \cite{wolfer_2023} Section 4.2.2.
\end{proof}

By combining Proposition \ref{prop:logsobolevbound} and Proposition \ref{prop:GPGPGKL}, we thus have the following:
\begin{corollary}
    For $P \in \mathcal{L}(\pi)$, $\mathcal{X} = \bigsqcup_{i=1}^k \mathcal{O}_i$ and any Gibbs orbit kernel $G$,  
    $$D_{\mathrm{KL}}^\pi \left((PG)^l \| \Pi \right) \leq \left(1-\alpha\left(\overline{P}^2\right)\right)^{l-1} H(\overline{\pi}),$$
    for $l \in \mathbb{N}$ with $l \geq 2$, and $\alpha$ as given in \eqref{eq:alpha}.
\end{corollary}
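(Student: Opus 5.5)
The corollary follows by chaining Proposition \ref{prop:GPGPGKL} with Proposition \ref{prop:logsobolevbound}. Let $l \geq 2$. The right-most inequality of Proposition \ref{prop:GPGPGKL} gives
\begin{align*}
    D_{\mathrm{KL}}^\pi \left((PG)^l \| \Pi \right) \leq D_{\mathrm{KL}}^\pi \left((GPG)^{l-1} \| \Pi \right).
\end{align*}
Its hypotheses are $P \in \mathcal{L}(\pi)$, an arbitrary partition and the associated Gibbs kernel $G$, which are exactly the hypotheses of the corollary.

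Since $l-1 \geq 1$ lies in $\mathbb{N}$, we then apply Proposition \ref{prop:logsobolevbound} with $l-1$ in place of $l$. It requires only $P \in \mathcal{L}(\pi)$ and an arbitrary partition, and yields
\begin{align*}
    D_{\mathrm{KL}}^\pi \left((GPG)^{l-1} \| \Pi \right) = D_{\mathrm{KL}}^{\overline\pi} \left( \overline{P}^{l-1} \| \overline{\Pi}\right) \leq \left(1-\alpha\left(\overline{P}^2\right)\right)^{l-1} H(\overline{\pi}).
\end{align*}
Combining the two displays gives the claimed bound. By the equality in Proposition \ref{prop:GPGPGKL}, the same bound also holds for $(GP)^l$.

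There is no genuine obstacle. The only point to check is that Proposition \ref{prop:logsobolevbound} holds for every exponent $l-1 \geq 1$, not just a fixed one. This is indeed the case, since the Diaconis--Saloff-Coste contraction it invokes is iterated step by step. In particular, no separate argument is needed for the boundary case $l=2$.
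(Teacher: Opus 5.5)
Your proposal is correct and matches the paper's argument: the corollary follows from the right-most inequality of Proposition \ref{prop:GPGPGKL} combined with Proposition \ref{prop:logsobolevbound} at exponent $l-1$, which that proposition already covers since it is stated for every $l \in \mathbb{N}$. Your extension to $(GP)^l$ via the equality in Proposition \ref{prop:GPGPGKL} is also valid.
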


As such, the rate of decay of the KL divergence of both $(GP)^l$ and $(PG)^l$ from $\Pi$ can be bounded above by an expression that depends on the log-Sobolev constant of $\overline{P}^2$.

\section{Optimisation in Frobenius norm} \label{sec:frob}
First, we show the relationship between $GP^2$ and $\overline{P^2}$ in terms of trace. 

\begin{proposition} \label{prop:trace GP^2 = P^2bar}
        For $P\in \mathcal{S}(\pi)$ and $\mathcal{X} = \bigsqcup_{i=1}^k \mathcal{O}_i$, we have that 
        $$\Tr(GP^2) = \Tr(\overline{P^2}).$$
\end{proposition}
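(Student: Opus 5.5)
The plan is to compute the trace of $GP^2$ directly from the definition of the Gibbs kernel $G$, by grouping the summation over states according to orbits. Note first that $P^2 \in \mathcal{S}(\pi)$ whenever $P\in\mathcal{S}(\pi)$, so $\overline{P^2}$ is well defined via \eqref{eq:Pbar} with $P$ replaced by $P^2$. In fact the argument will not use any property of $P^2$ beyond its being a matrix indexed by $\mathcal{X}$, so I will prove the slightly more general identity $\Tr(GM) = \Tr(\overline{M})$ for an arbitrary $M$, where $\overline{M}$ is defined by the same formula as \eqref{eq:Pbar}. Then I will specialise to $M = P^2$.

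The key steps, in order, are as follows.

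First, write
\begin{align*}
\Tr(GM) = \sum_{x\in\mathcal{X}} \sum_{y \in \mathcal{X}} G(x,y)M(y,x).
\end{align*}
Since $G(x,y)=0$ unless $y\in\mathcal{O}(x)$, I restrict the inner sum to $y \in \mathcal{O}(x)$ and insert $G(x,y) = \pi(y)/\pi(\mathcal{O}(x))$.

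Second, decompose the outer sum over the orbits, $\sum_{x} = \sum_{i=1}^k \sum_{x\in\mathcal{O}_i}$. For $x \in \mathcal{O}_i$ the denominator is the constant $\pi(\mathcal{O}_i)$, which gives
\begin{align*}
\Tr(GM) = \sum_{i=1}^k \frac{1}{\pi(\mathcal{O}_i)} \sum_{x,y\in\mathcal{O}_i} \pi(y) M(y,x).
\end{align*}

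Third, relabel the dummy variables $x \leftrightarrow y$. The $i$-th summand then becomes exactly $\overline{M}(i,i)$ by \eqref{eq:Pbar} with $j=i$. Summing over $i$ gives $\Tr(\overline{M})$, and setting $M=P^2$ yields the claim.

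There is no real obstacle here; the proof is a direct bookkeeping computation. The only point requiring care is the indexing. The weight $\pi(y)$ attaches to the \emph{column} index of $G$, which is the \emph{row} index of $M$ in the product, and this is precisely the index that must carry the $\pi$-weight in the definition of $\overline{M}(i,i)$. I would therefore double-check the orientation after the relabelling, and also note explicitly that stationarity of $P$ is needed only so that $\overline{P^2}$ is a genuine transition matrix, not for the trace identity itself.
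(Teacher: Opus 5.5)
Your proof is correct and is essentially the paper's argument: the paper also expands $\Tr(GP^2)=\sum_x GP^2(x,x)$, groups the sum by orbits using $G(x,y)=\pi(y)/\pi(\mathcal{O}(x))$ on $\mathcal{O}(x)$, and identifies each block with $\overline{P^2}(i,i)$, doing silently the $x \leftrightarrow y$ relabelling that you make explicit. Your remark that the identity $\Tr(GM)=\Tr(\overline{M})$ holds for an arbitrary matrix $M$, with stationarity needed only to make $\overline{P^2}$ a genuine transition matrix, is a correct and harmless generalisation.
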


\begin{proof}
    \begin{align*}
        \Tr(GP^2) &= \sum_{x\in \mathcal{X}}GP^2(x,x)\\
        &= \sum_{i=1}^k \frac{1}{\pi(\mathcal{O}_i)} \sum_{x, y \in \mathcal{O}_i} \pi(x)P^2(x,y)\\
        &= \sum_{i=1}^k \overline{P^2}(i,i) = \Tr(\overline{P^2}).
    \end{align*}
\end{proof}

\begin{corollary}\label{cor:GPPi}
    For $P\in \mathcal{L}(\pi)$,
    \begin{equation*}
        \|GP- \Pi\|^2_{F,\pi} = \Tr(GP^2) - 1 = \Tr(\overline{P^2}) - 1 =  \Tr(\overline{P^2}- \overline{\Pi}).
    \end{equation*}
\end{corollary}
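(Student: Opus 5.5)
We expand the squared Frobenius norm using the inner product $\langle M,N\rangle_{F,\pi} = \Tr(M^*N)$ established in Section \ref{sec:prelim}:
\begin{align*}
\|GP-\Pi\|_{F,\pi}^2 = \Tr\big((GP)^*GP\big) - \Tr\big((GP)^*\Pi\big) - \Tr\big(\Pi^*GP\big) + \Tr(\Pi^*\Pi).
\end{align*}
The ingredients are that $P\in\mathcal{L}(\pi)$ gives $P^*=P$, that $G$ is $\pi$-reversible and idempotent so $G^*=G$ and $G^2=G$, and that $\Pi^*=\Pi$ with $\Pi^2=\Pi$. For the adjoint of a product we use $(AB)^*=B^*A^*$, which is immediate from the definition of the $\ell^2(\pi)$-adjoint.

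The key steps, in order, are as follows. First, $(GP)^*GP = P^*G^*GP = PG^2P = PGP$, so by cyclicity of the trace $\Tr(PGP)=\Tr(GP^2)$. Second, the cross terms. Since $P$ and $G$ are both $\pi$-stationary, $\Pi P = \Pi G = \Pi$ because every row of $\Pi$ is $\pi$. Since they are stochastic, $P\Pi = G\Pi = \Pi$ because the columns of $\Pi$ are constant. Hence $(GP)^*\Pi = PG\Pi = \Pi$ and $\Pi^* GP = \Pi GP = \Pi$. Each cross term is therefore $\Tr(\Pi)=\sum_x\pi(x)=1$, and likewise $\Tr(\Pi^*\Pi)=\Tr(\Pi)=1$. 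Combining, we get $\Tr(GP^2)-1-1+1=\Tr(GP^2)-1$, which is the first equality.

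The second equality is Proposition \ref{prop:trace GP^2 = P^2bar} applied directly; it only needs $P\in\mathcal{S}(\pi)$, which reversibility implies. The third equality follows from $\Tr(\overline{\Pi})=\sum_i \overline{\pi}(i)=\sum_i \pi(\mathcal{O}_i)=1$.

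The only point needing care is checking that $\Pi^*=\Pi$ under the weighted adjoint. From the formula used in the positive-definiteness argument, $M^*(x,y)=\pi(y)M(y,x)/\pi(x)$, so $\Pi^*(x,y)=\pi(y)\pi(x)/\pi(x)=\pi(y)=\Pi(x,y)$. With that settled, there is no real obstacle; the rest is bookkeeping of adjoints and the absorbing identities $\Pi P = P\Pi = \Pi$, $\Pi G = G\Pi = \Pi$.
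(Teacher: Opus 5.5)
Your proof is correct and follows essentially the route the paper has in mind. The paper states this corollary without a separate proof, as an immediate consequence of $\Tr(GP^2)=\Tr(\overline{P^2})$; your expansion of the squared norm (using $P^*=P$, $G^*=G$, $G^2=G$ and the absorbing identities for $\Pi$ to get $\Tr(PGP)-1=\Tr(GP^2)-1$), followed by $\Tr(\overline{\Pi})=1$, just fills in the omitted step.
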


A similar relationship can be shown for $GPG$ and $\overline{P}$.
\begin{proposition}
    Given $P\in \mathcal{L}(\pi)$ and $\mathcal{X} = \bigsqcup_{i=1}^k \mathcal{O}_i$, for $l \in \mathbb{N}$ we have
    \begin{equation*}
        \Tr((GPG)^l) = \Tr\left(\overline{P}^l\right).
    \end{equation*}
\end{proposition}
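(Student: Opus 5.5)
We can read the trace directly off the entrywise formula in Proposition \ref{prop:GPGoverlinePrs}; reversibility is not needed, since that proposition only uses $P \in \mathcal{S}(\pi)$. For every $l \in \mathbb{N}$ and every $x \in \mathcal{O}_i$, Proposition \ref{prop:GPGoverlinePrs} with $y = x$ and $j = i$ gives
\begin{equation*}
    (GPG)^l(x,x) = \overline{P}^l(i,i)\,\frac{\pi(x)}{\pi(\mathcal{O}_i)}.
\end{equation*}

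We then sum over the diagonal, grouping the states by orbit:
\begin{equation*}
    \Tr\left((GPG)^l\right) = \sum_{i=1}^k \sum_{x \in \mathcal{O}_i} \overline{P}^l(i,i)\,\frac{\pi(x)}{\pi(\mathcal{O}_i)} = \sum_{i=1}^k \overline{P}^l(i,i)\,\frac{\sum_{x\in\mathcal{O}_i}\pi(x)}{\pi(\mathcal{O}_i)} = \sum_{i=1}^k \overline{P}^l(i,i).
\end{equation*}
The inner sum collapses because $\sum_{x\in\mathcal{O}_i}\pi(x) = \pi(\mathcal{O}_i)$, and the final sum is by definition $\Tr(\overline{P}^l)$. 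This is the same mechanism as in Proposition \ref{prop:trace GP^2 = P^2bar}.

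There is no real obstacle here. The one point to check is that the quantity in the statement is the power of the projection, $\overline{P}^l$, not the projection of the power, $\overline{P^l}$. That is exactly what Proposition \ref{prop:GPGoverlinePrs} delivers: its inductive step already produced $\overline{P}^{l+1}$ by composing block-constant kernels.

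An alternative route would use cyclicity of the trace and idempotency $G^2=G$ to reduce to $\Tr(G(PG)^l)$, but the direct entrywise computation above is cleaner. I will therefore present only that.
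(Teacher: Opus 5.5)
Your proof is correct, and it takes a different route from the paper's. The paper argues spectrally. It imports Proposition 6.2 of \cite{GPG}: the non-zero eigenvalues of $GPG$ coincide with those of $\overline{P}$ and the remaining ones are $0$. It then applies $\Tr(A^l)=\sum_i \lambda_i(A)^l$.

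You instead read the diagonal of $(GPG)^l$ directly off the entrywise formula of Proposition \ref{prop:GPGoverlinePrs}, and the weights $\pi(x)/\pi(\mathcal{O}_i)$ then sum to $1$ on each orbit. Your argument has three advantages:
\begin{itemize}
\item It is self-contained within the paper rather than resting on an external spectral result.
\item It needs no bookkeeping of algebraic multiplicities.
\item It makes plain that the hypothesis $P\in\mathcal{L}(\pi)$ plays no role in this identity. The entrywise formula uses neither reversibility nor even stationarity. Reversibility is only needed downstream, in Corollary \ref{cor:norm GPG}, where self-adjointness of $GPG$ is used to write $\|(GPG)^l-\Pi\|_{F,\pi}^2=\Tr((GPG)^{2l})-1$.
\end{itemize}

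The paper's spectral route does carry strictly more information than the trace identity: it gives the whole spectrum of $GPG$, in particular $\lambda_2(GPG)=\lambda_2(\overline{P})$, which is what Proposition \ref{prop:GSPGSlPi} actually uses. For the trace identity alone, your computation is the more economical proof.
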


\begin{proof}
Proposition 6.2 of \cite{GPG} shows that the non-zero eigenvalues of $GPG$ coincide with those of $\overline{P}$, and that all remaining eigenvalues of $GPG$ are equal to $0$.
Since $\Tr(A^l)=\sum_i \lambda_i(A)^l$ for any real, square matrix $A$ and integer $l \geq 1$, the claim is as follows.
\end{proof}

Analogous to Proposition \ref{prop:GPGoverlineP KL} which relates the KL divergence of $(GPG)^l$ from $\Pi$ to the KL divergence of $\overline{P}^l$ from $\overline{\Pi}$, using the above proposition allows us to reduce the Frobenius norm of $(GPG)^l - \Pi$ to the projected version $\overline{P}^l - \overline{\Pi}$:

\begin{corollary} \label{cor:norm GPG}
    For $P \in \mathcal{L}(\pi)$ and $\mathcal{X} = \bigsqcup_{i=1}^k \mathcal{O}_i$, it follows that for $l \in \mathbb{N}$
    \begin{align*}
        \|(GPG)^l - \Pi\|_{F, \pi}^2 &= \Tr((GPG)^{2l})-1 \nonumber\\
        &= \Tr(\overline{P}^{2l}) - \Tr(\overline{\Pi}) = \left\|\overline{P}^l - \overline{\Pi}\right\|^2_{F,\overline{\pi}}.
    \end{align*}
\end{corollary}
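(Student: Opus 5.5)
We plan to establish the three equalities of Corollary \ref{cor:norm GPG} in sequence, starting from the definition of the $\pi$-Frobenius norm. First note that $GPG$ is $\pi$-reversible: $G$ and $P$ are self-adjoint in $\ell^2(\pi)$, so $(GPG)^* = G^*P^*G^* = GPG$, and consequently $((GPG)^l)^* = (GPG)^l$. Moreover $\Pi$ is self-adjoint, idempotent, and satisfies $M\Pi = \Pi M = \Pi$ for every $M \in \mathcal{S}(\pi)$; this uses $M\mathbf{1} = \mathbf{1}$ and $\pi M = \pi$, both of which hold for $GPG$ because $\pi G = \pi$ and $\pi P = \pi$. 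Expanding then gives
\begin{align*}
\|(GPG)^l - \Pi\|_{F,\pi}^2 &= \Tr\left(((GPG)^l - \Pi)^*((GPG)^l - \Pi)\right) \\
&= \Tr\left((GPG)^{2l}\right) - 2\Tr\left((GPG)^l\Pi\right) + \Tr(\Pi^2) \\
&= \Tr\left((GPG)^{2l}\right) - \Tr(\Pi),
\end{align*}
and $\Tr(\Pi) = \sum_x \pi(x) = 1$. This proves the first equality.

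For the second equality we apply the preceding proposition with exponent $2l$, which gives $\Tr((GPG)^{2l}) = \Tr(\overline{P}^{2l})$. We also use $\Tr(\overline{\Pi}) = \sum_i \overline{\pi}(i) = 1$.

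The third equality is the same computation carried out on the projected chain. Since $P \in \mathcal{L}(\pi)$, $\overline{P}$ is $\overline{\pi}$-reversible, as already noted in the proof of Proposition \ref{prop:logsobolevbound}; hence $\overline{P}^l$ is self-adjoint in $\ell^2(\overline{\pi})$. Also $\overline{P}\,\overline{\Pi} = \overline{\Pi}\,\overline{P} = \overline{\Pi}$ and $\overline{\Pi}^2 = \overline{\Pi}$. Repeating the expansion above in $\ell^2(\overline{\pi})$ therefore yields $\|\overline{P}^l - \overline{\Pi}\|_{F,\overline{\pi}}^2 = \Tr(\overline{P}^{2l}) - \Tr(\overline{\Pi})$.

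None of these steps is a serious obstacle. The only point needing care is to justify the adjoint identities, so that the cross terms reduce to $\Tr(\Pi)$ and the squared term becomes $\Tr((GPG)^{2l})$ instead of $\Tr(((GPG)^l)^*(GPG)^l)$. This is exactly where reversibility of $P$, and hence of $GPG$ and of $\overline{P}$, is used.
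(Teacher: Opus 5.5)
Your proposal is correct and takes essentially the same route as the paper, which states this corollary as an immediate consequence of the preceding identity $\Tr((GPG)^m)=\Tr(\overline{P}^m)$. Reversibility makes $(GPG)^l$ and $\Pi$ self-adjoint in $\ell^2(\pi)$, and likewise $\overline{P}^l$ and $\overline{\Pi}$ in $\ell^2(\overline{\pi})$, so each squared norm collapses to a trace of the $2l$-th power minus $1$, and applying that identity with $m=2l$ finishes the argument; your explicit check of the cross terms via $M\Pi=\Pi M=\Pi$ is exactly the detail the paper leaves implicit.
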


Analogous to Proposition \ref{prop:GPGPGKL}, we can further relate the rate of convergence in Frobenius norm to stationarity between $(GP)^l$, $(PG)^l$ and $(GPG)^l.$

\begin{proposition}
    For $P \in \mathcal{L}(\pi)$, $\mathcal{X} = \bigsqcup_{i=1}^k \mathcal{O}_i$ and any Gibbs kernel $G$, the inequality
    $$\|(GPG)^l - \Pi\|_{F, \pi} \leq \|(PG)^l - \Pi\|_{F, \pi} = \|(GP)^l - \Pi\|_{F, \pi}$$
    holds for $l \in \mathbb{N}$. For $l \geq 2$, we also have
    \begin{align*}
         \|(PG)^l - \Pi\|_{F, \pi} = \|(GP)^l - \Pi\|_{F, \pi} \leq \|(GPG)^{l-1} - \Pi\|_{F, \pi}.
    \end{align*}
\end{proposition}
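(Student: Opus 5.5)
Everything will be expressed in terms of the Frobenius inner product $\langle M,N\rangle_{F,\pi}=\Tr(M^*N)$ and the fact that $\Pi$ is absorbed by $\pi$-stationary kernels.

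\textbf{Reductions.} For any $Q\in\mathcal{S}(\pi)$ (in particular $G$, $P$, and their products) we have $Q\Pi=\Pi$ and $\Pi Q=\Pi$. Also $\Pi^*=\Pi$ and $\Tr(\Pi)=1$. Consequently $\|A-\Pi\|_{F,\pi}^2=\Tr(A^*A)-1$ for any $\pi$-stationary $A$ whose adjoint is also $\pi$-stationary. This is the same computation as in Corollary~\ref{cor:GPPi} and Corollary~\ref{cor:norm GPG}.

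\textbf{Step 1: the equality.} Since $P^*=P$ and $G^*=G$, we have $((PG)^l)^*=(GP)^l$. The Frobenius norm is adjoint-invariant, because $\|M^*\|_{F,\pi}^2=\Tr(MM^*)=\Tr(M^*M)$ by cyclicity. Applying this to $M=(PG)^l-\Pi$ gives $\|(PG)^l-\Pi\|_{F,\pi}=\|(GP)^l-\Pi\|_{F,\pi}$.

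\textbf{Step 2: the left inequality.} Note that $(GPG)^l=G(PG)^lG$, using $G^2=G$. Write $A=(PG)^l$; then $G(A-\Pi)G=GAG-\Pi$. The map $M\mapsto GMG$ is a contraction in $\|\cdot\|_{F,\pi}$. Indeed,
\[
\|GMG\|_{F,\pi}^2=\Tr(GM^*GMG)=\Tr(M^*GMG)=\langle GM, MG\rangle_{F,\pi}\le \|GM\|_{F,\pi}\|MG\|_{F,\pi}.
\]
Moreover $\|GM\|_{F,\pi}^2=\Tr(M^*GM)\le \Tr(M^*M)$, because $G$ is a self-adjoint orthogonal projection in $\ell^2(\pi)$ and so $M^*GM\preceq M^*M$. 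The same bound holds for $\|MG\|_{F,\pi}$ by cyclicity. Hence $\|(GPG)^l-\Pi\|_{F,\pi}\le\|(PG)^l-\Pi\|_{F,\pi}$.

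\textbf{Step 3: the right inequality ($l\ge2$).} Write $(PG)^l=P(GPG)^{l-1}$ and set $B=(GPG)^{l-1}-\Pi$. Since $P\Pi=\Pi$, we get $(PG)^l-\Pi=PB$. We then need $\|PB\|_{F,\pi}^2=\Tr(B^*P^2B)\le\Tr(B^*B)$. This holds because $P$ is a self-adjoint contraction on $\ell^2(\pi)$ (its spectrum lies in $[-1,1]$), so $P^2\preceq I$, and therefore $B^*P^2B\preceq B^*B$ in the $\pi$-inner-product sense. Taking traces preserves this order.

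\textbf{Main obstacle.} The only delicate point is the operator-ordering arguments in Steps~2 and~3. We must phrase them correctly with respect to $\ell^2(\pi)$, not the standard inner product: $\Tr(M^*XM)$ with $X$ $\pi$-self-adjoint and $0\preceq X\preceq I$ must be compared with $\Tr(M^*M)$. I would handle this by conjugating with $D^{1/2}=\mathrm{diag}(\pi)^{1/2}$. This turns $\pi$-adjoints into ordinary transposes, so that $\|M\|_{F,\pi}$ becomes the usual Frobenius norm of $D^{1/2}MD^{-1/2}$. After this change, $G$ becomes a symmetric projection and $P$ a symmetric matrix of spectral norm at most one. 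The inequalities then follow from $\|XY\|_F\le\|X\|_{op}\|Y\|_F$.
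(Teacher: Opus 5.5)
Your proposal is correct and follows essentially the paper's argument: adjoint invariance of the Frobenius (Hilbert--Schmidt) norm gives the equality, and $\|TS\|_{\mathrm{HS}}\le\|T\|_{\mathrm{op}}\|S\|_{\mathrm{HS}}$ with $\|G\|_{\mathrm{op}},\|P\|_{\mathrm{op}}\le 1$ gives both inequalities, using $(PG)^l-\Pi=P\bigl((GPG)^{l-1}-\Pi\bigr)$ for the second. The only cosmetic difference is in Step 2: the paper writes $(GPG)^l-\Pi=G\bigl((PG)^l-\Pi\bigr)$ directly, since $(GPG)^l=G(PG)^l$, so your two-sided sandwich with Cauchy--Schwarz is valid but unnecessary.
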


\begin{proof}
    Let $H$ be a Hilbert space with $\mathcal{T}:H \to H$ a bounded linear operator and $\mathcal{S}: H \to H$ a Hilbert-Schmidt operator. Then we have the following properties (see \cite{conwayfunctional}):
    \begin{align*}
        \|\mathcal{S}^*\|_{\mathrm{HS}} &= \|\mathcal{S}\|_{\mathrm{HS}},\\
        \|\mathcal{TS}\|_{\mathrm{HS}} &\leq \|\mathcal{T}\|_{\mathrm{op}}\|\mathcal{S}\|_{\mathrm{HS}},\\
        \|\mathcal{ST}\|_{\mathrm{HS}} &\leq \|\mathcal{S}\|_{\mathrm{HS}}\|\mathcal{T}\|_{\mathrm{op}},
    \end{align*}
    where $\| \cdot \|_{\mathrm{op}}$ is the usual operator norm and $\|\cdot \|_{\mathrm{HS}}$ is the Hilbert-Schmidt norm. 

    Since the Frobenius norm is a Hilbert-Schmidt norm on a finite dimensional space, all $P \in \mathcal{S}(\pi)$ are bounded operators. Furthermore, $\|P\|_{\mathrm{op}}^2 \leq 1$ for all $P \in \mathcal{L}(\pi)$.

    Hence the first inequality holds by noting that 
    $$\|(GPG)^l - \Pi\|_{F, \pi} = \|G((PG)^l - \Pi)\|_{F, \pi} \leq \|(PG)^l - \Pi\|_{F, \pi}.$$
    The second inequality is similar, with
    $$\|(PG)^l - \Pi\|_{F, \pi} = \|P((GPG)^{l-1} - \Pi)\|_{F, \pi} \leq \|(GPG)^{l-1} - \Pi\|_{F, \pi},$$
    and finally the equality holds since $((PG)^l - \Pi)^* = (GP)^l - \Pi$ given that $P \in \mathcal{L}(\pi).$
\end{proof}

\subsection{Case of $k=2$ orbits for $GP$} \label{sec:GP_k=2}
We consider in this subsection the case where $\mathcal{X} = S \sqcup S'$, and we seek to optimise with respect to $S$ the objective $f : 2^{\mathcal{X}} \to \mathbb{R}^+$ given by
\begin{align}\label{eq:f(S)}
    f(S) = f(S,P) := \|G_S P - \Pi\|_{F, \pi}^2,
\end{align}
where we use $G = G_S$ to highlight the dependence of the Gibbs kernel with respect to the choice of $S$. 

First, we exclude the two trivial cases $S = \mathcal{X}$ and $S = \emptyset$, since the choice of such partitions give $G_S = \Pi$ and hence $\|G_S P - \Pi\|_{F, \pi}^2 =0$. Therefore, we look towards solving the objective
\begin{equation}\label{eq:GP obj}
    \min_{S \neq \mathcal{X}, \emptyset} \|G_S P - \Pi\|_{F, \pi}^2.
\end{equation}

Recall $g : 2^{\mathcal{X}} \to \mathbb{R}^+$ as given in \eqref{eq:g(S)}, where we instead consider $P^2$:
\begin{equation*}
    g(S) = g(S,P^2) := \frac{1}{\pi(S)\pi(S')} \sum_{\substack{x \in S\\ y \in S'}} \pi(x)P^2(x,y).
\end{equation*}

For $P \in \mathcal{L}(\pi)$, in the special case when one takes $S^*$ to be the symmetrised Cheeger's cut of $P^2$, then we have
\begin{align*}
    g(S,P^2) \geq g(S^*, P^2) = \phi^*(P^2).
\end{align*}

Our first result of this section links the squared-Frobenius norm to stationarity of $G_S P$ to $g(S)$, and in the special case when one takes $S$ to be a symmetrised Cheeger's cut of $P^2$, the same norm can be linked to the symmetrised Cheeger's constant.
\begin{proposition} \label{prop:GP argmin}
    For any $\pi$-reversible $P$, $S \neq \mathcal{X}, \emptyset$ and $\mathcal{X} = S \sqcup S'$, we have
    \begin{align*}
        \|G_S P - \Pi\|_{F, \pi}^2 = 1 - g(S),
    \end{align*}
    where we recall $g$ is introduced in \eqref{eq:g(S)}. In particular, when one takes $S^*$ to be a symmetrised Cheeger's cut of $P^2$, then we have
    \begin{align*}
        \|G_{S^*} P - \Pi\|_{F, \pi}^2 = 1 - \phi^*(P^2).
    \end{align*}
    We also note that the non-trivial set $S \subset \mathcal{X}$ that achieves the minimum of \eqref{eq:GP obj} also maximises the function $g(S)$. That is, 
    $$\argmin_{S \neq \mathcal{X}, \emptyset} \|G_S P - \Pi\|_{F, \pi}^2 = \argmax_{S \neq \mathcal{X}, \emptyset} g(S).$$
    In fact, the other direction holds as well:
    $$\argmax_{S \neq \mathcal{X}, \emptyset} \|G_S P - \Pi\|_{F, \pi}^2 = \argmin_{S \neq \mathcal{X}, \emptyset} g(S).$$
\end{proposition}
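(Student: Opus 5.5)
We start from Corollary \ref{cor:GPPi}, which gives $\|G_SP - \Pi\|_{F,\pi}^2 = \Tr(\overline{P^2}) - 1$, where $\overline{P^2}$ is the $2\times 2$ projection of $P^2$ with respect to the partition $(S,S')$. The task is therefore to express $\Tr(\overline{P^2})$ in terms of the single off-diagonal flow of $P^2$ across the cut.

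Write $Q(S,S') := \sum_{x \in S, y \in S'} \pi(x)P^2(x,y)$. Since $P^2$ is $\pi$-stationary (indeed $\pi$-reversible), the row sums of $\overline{P^2}$ equal one. Hence
\begin{align*}
\overline{P^2}(1,1) &= 1 - \frac{Q(S,S')}{\pi(S)}, &
\overline{P^2}(2,2) &= 1 - \frac{Q(S',S)}{\pi(S')}.
\end{align*}
By stationarity, or by reversibility of $P^2$, the flows across the cut in the two directions agree: $Q(S',S) = Q(S,S')$. Therefore
\begin{align*}
\Tr(\overline{P^2}) = 2 - Q(S,S')\left(\frac{1}{\pi(S)} + \frac{1}{\pi(S')}\right) = 2 - \frac{Q(S,S')}{\pi(S)\pi(S')} = 2 - g(S),
\end{align*}
using $\pi(S) + \pi(S') = 1$. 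Subtracting $1$ gives $\|G_SP - \Pi\|_{F,\pi}^2 = 1 - g(S)$.

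The symmetrised Cheeger statement follows by substituting $S = S^*$. By definition of a symmetrised Cheeger's cut of $P^2$, we have $g(S^*) = \phi^*(P^2)$.

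Both argmin/argmax equalities follow because $S \mapsto 1 - g(S)$ is a strictly decreasing affine transformation of $g$ over the same feasible set $\{S \neq \mathcal{X}, \emptyset\}$. Minimisers of $1 - g$ are exactly maximisers of $g$, and maximisers of $1 - g$ are exactly minimisers of $g$.

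The only step needing care is the identity $Q(S',S) = Q(S,S')$, i.e.\ balance of flow across a cut. This holds for any $\pi$-stationary kernel, and $P^2$ is one. There is no substantive obstacle beyond this bookkeeping.
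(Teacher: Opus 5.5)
Your proposal is correct and follows essentially the same route as the paper: reduce via Corollary \ref{cor:GPPi} to $\Tr(\overline{P^2})-1$, write each diagonal entry of $\overline{P^2}$ as one minus the normalised flow of $P^2$ across the cut, and combine the balance of flow $Q(S',S)=Q(S,S')$ with $\pi(S)+\pi(S')=1$. You also spell out the Cheeger-cut and argmin/argmax consequences, which the paper leaves implicit, and they follow exactly as you say.
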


\begin{proof} \label{prop:GP argmax}
    From Proposition \ref{prop:trace GP^2 = P^2bar}, for any non-empty $S \neq \mathcal{X}$,
    \begin{align*}
        \Tr(GP^2) &= \Tr(\overline{P^2})\\
        &= \frac{1}{\pi(S)} \sum_{x,y \in S} \pi(x) P^2(x,y) + \frac{1}{\pi(S')} \sum_{x,y \in S'} \pi(x) P^2(x,y)\\
        &= \frac{1}{\pi(S)} \bigg(\pi(S) -  \sum_{\substack{x \in S \\ y \in S'}} \pi(x) P^2(x,y)\bigg) + \frac{1}{\pi(S')} \bigg(\pi(S') -  \sum_{\substack{x \in S' \\ y \in S}} \pi(x) P^2(x,y)\bigg)\\
        &= 2 - \frac{1}{\pi(S) \pi(S')} \sum_{\substack{x \in S \\ y \in S'}} \pi(x) P^2(x,y).
    \end{align*}
    Hence, 
    $$\|G_S P - \Pi\|_{F, \pi}^2 = 1 - \frac{1}{\pi(S) \pi(S')} \sum_{\substack{x \in S \\ y \in S'}} \pi(x) P^2(x,y)$$
    and the claim holds. 
\end{proof}

In fact, we note that the minimisation need only be done over the space
$$\mathcal{A} = \{S \subset \mathcal{X}: 0 < \pi(S) \leq 1/2\}.$$

\begin{proposition}
    The two optimisation problems below are equal given any $P \in \mathcal{L}(\pi)$, $S \neq \mathcal{X}, \emptyset$ and $\mathcal{X} = S \sqcup S'$:
    $$\min_{S \neq \mathcal{X}, \emptyset} \|G_S P - \Pi\|_{F, \pi}^2 = \min_{S \in \mathcal{A}} \|G_S P - \Pi\|_{F, \pi}^2.$$
\end{proposition}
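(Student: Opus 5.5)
The plan is to show that the objective is invariant under complementation, $S \mapsto S'$. Once this holds, every feasible $S$ can be swapped for whichever of $S$ or $S'$ has $\pi$-mass at most $1/2$, without changing the objective value.

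First, by Proposition \ref{prop:GP argmin}, for every non-trivial $S$ we have $\|G_S P - \Pi\|_{F,\pi}^2 = 1 - g(S)$, where
$$g(S) = \frac{1}{\pi(S)\pi(S')}\sum_{x\in S,\,y\in S'}\pi(x)P^2(x,y).$$
Since $P \in \mathcal{L}(\pi)$, the chain $P^2$ is also $\pi$-reversible, and in any case it is $\pi$-stationary. This gives the flow balance
$$\sum_{x\in S,\,y\in S'}\pi(x)P^2(x,y) = \sum_{x\in S',\,y\in S}\pi(x)P^2(x,y).$$
One can obtain it either from detailed balance by swapping the summation variables, or from stationarity, since both sides equal $\pi(S) - \sum_{x,y\in S}\pi(x)P^2(x,y)$. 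The denominator $\pi(S)\pi(S')$ is symmetric in $S$ and $S'$, so $g(S) = g(S')$ and therefore $\|G_SP-\Pi\|_{F,\pi}^2 = \|G_{S'}P-\Pi\|_{F,\pi}^2$. A more direct route is to note that $G_S = G_{S'}$ as matrices, because the partition $\{S, S'\}$ is the same. This gives the invariance immediately and removes the need for the flow computation altogether. I would state this observation first and keep the $g$-based argument only as a consistency check.

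Next, I would prove the equality of the two minima. Since $\mathcal{A}$ is a subset of the non-trivial sets (elements of $\mathcal{A}$ satisfy $0 < \pi(S) \le 1/2 < 1$ and $\pi$ has full support), the minimum over $\mathcal{A}$ is at least the minimum over all non-trivial $S$. For the reverse inequality, take any non-trivial $S$. If $\pi(S) \le 1/2$, then $S \in \mathcal{A}$. Otherwise $\pi(S') = 1 - \pi(S) < 1/2$, and $S'$ is non-empty, so $S' \in \mathcal{A}$ and it has the same objective value. Hence every value attained over the non-trivial sets is also attained on $\mathcal{A}$, and the two minima coincide.

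I do not expect a real obstacle here. The only point needing care is confirming that $S' \in \mathcal{A}$, which requires $S'$ to be non-empty so that $\pi(S') > 0$. This follows from $S \neq \mathcal{X}$ together with the positivity of $\pi$.
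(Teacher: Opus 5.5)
Your proposal is correct and follows the paper's argument: the paper also observes $g(S)=g(S')$, deduces $f(S)=f(S')$, and restricts to $\mathcal{A}$ by replacing $S$ with $S'$ whenever $\pi(S)>1/2$. Your extra observation that $G_S=G_{S'}$, because the partition $\{S,S'\}$ is the same, reaches the complement invariance more directly, and shows that this reduction needs neither the reversibility of $P$ nor the identity $f=1-g$.
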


\begin{proof}
    Notice that $g(S)$ given in \eqref{eq:g(S)} is symmetric in the sense that $g(S) = g(S')$. Hence for any $S \subset \mathcal{X}$ with $1/2 < \pi(S) < 1$, it must hold that $f(S) = 1-g(S) = f(S').$ It thus suffices to minimise the objective over $\mathcal{A}$.
\end{proof}

Proposition \ref{prop:GP argmax} therefore reveals that the partitions induced by Cheeger's cuts are the worst possible choices of orbits for the choice of $S$ when measured in the Frobenius norm. From this perspective, the objective can be understood as ``anti-Cheeger'', it selects orbits that deliberately cut across metastable regions rather than respecting them.

However, direct optimisation of $g(S)$ is combinatorial and often intractable when $|\mathcal{X}|$ is large. This motivates us to consider the functional 

\begin{equation}\label{eq:h(S)}
    h(S) = h(S,P^2) := \frac{1}{\pi(S)} \sum_{\substack{x \in S\\ y \in S'}} \pi(x)P^2(x,y).
\end{equation}

We first show how we can always find a set $S \in \mathcal{A}$ which maximises $h(S)$, before showing how it can be used to approximate a solution in minimising $\|G_S P - \Pi\|_{F, \pi}^2$.

\begin{proposition} \label{prop:x* optimal}
    For ergodic $P \in \mathcal{S}(\pi)$ and $|\mathcal{X}| \geq 2$, let 
    $$x^* = x^*(P^2) \in \argmax_{0 < \pi(x) < 1} \left(1-P^2(x,x)\right).$$
    Then 
    $$U^* = \{x^*\} \in \argmax_{S \neq \mathcal{X}, \emptyset} h(S).$$
    Note that $U^* \in \mathcal{A}$.
\end{proposition}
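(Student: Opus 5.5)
The plan is to show that $h(S)$ is a weighted average of per-state escape probabilities, and then bound that average by its maximum term. Write $Q := P^2$, which is again a $\pi$-stationary transition matrix. For any nonempty $S$ with $S \neq \mathcal{X}$,
\begin{align*}
    h(S) = \frac{1}{\pi(S)} \sum_{x \in S} \pi(x) \sum_{y \in S'} Q(x,y) = \sum_{x \in S} \frac{\pi(x)}{\pi(S)}\, Q(x,S').
\end{align*}
So $h(S)$ is a convex combination, with weights $\pi(x)/\pi(S)$ over $x \in S$, of the quantities $Q(x,S')$.

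First, for each $x \in S$, since $S' \subseteq \mathcal{X}\setminus\{x\}$ and the entries of $Q$ are nonnegative, we have $Q(x,S') \le Q(x,\mathcal{X}\setminus\{x\}) = 1 - Q(x,x)$. Next, every $x \in S$ satisfies $0<\pi(x)<1$: positivity holds because $\pi \in \mathcal{P}(\mathcal{X})$, and $\pi(x)<1$ holds because $S' \neq \emptyset$ carries positive mass. Hence each such $x$ lies in the index set of the $\argmax$, so $1-Q(x,x) \le 1-Q(x^*,x^*)$. Averaging these bounds gives $h(S) \le 1 - Q(x^*,x^*)$ for every admissible $S$.

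Then I would check that the bound is attained. For $S=\{x^*\}$, which is nonempty and proper because $|\mathcal{X}|\ge 2$, we have $S' = \mathcal{X}\setminus\{x^*\}$, and so $h(\{x^*\}) = Q(x^*,S') = 1-Q(x^*,x^*)$. Therefore $\{x^*\}$ is a maximiser. The set over which $x^*$ is chosen is nonempty, since $|\mathcal{X}|\ge 2$ and all masses are positive, so every state has $\pi(x)<1$.

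It remains to show $U^*\in\mathcal{A}$, that is, $\pi(x^*) \le 1/2$, and this is the only delicate step. It does not follow from the maximisation over $1-P^2(x,x)$ alone: if $x^*$ has mass above $1/2$, the claim fails unless there is a tie. My first attempt would be to observe that if $\pi(x^*)>1/2$, then the complement $T = \mathcal{X}\setminus\{x^*\}$ satisfies $\pi(T) < 1/2$, and by stationarity
\begin{align*}
    \pi(x^*)\bigl(1-Q(x^*,x^*)\bigr) = \sum_{y\in T}\pi(y)Q(y,x^*) \le \sum_{y \in T}\pi(y)\bigl(1-Q(y,y)\bigr).
\end{align*}
The right-hand side is at most $\pi(T)\max_{y\in T}\bigl(1-Q(y,y)\bigr)$. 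This forces some $y\in T$ with $1-Q(y,y) \ge \frac{\pi(x^*)}{\pi(T)}\bigl(1-Q(x^*,x^*)\bigr) > 1-Q(x^*,x^*)$, unless $1-Q(x^*,x^*)=0$, which ergodicity excludes. That contradicts the maximality of $x^*$, so $\pi(x^*)\le 1/2$ and $U^* \in \mathcal{A}$.

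I expect this last step to be the main obstacle, and I would double-check that the inequality $Q(y,x^*)\le 1-Q(y,y)$ is used correctly. Ergodicity of $P$ only guarantees irreducibility of $P$, so one must confirm that $Q(x^*,x^*)<1$. This holds because if $P^2(x,x)=1$, then $x$ would communicate only with the states it reaches in one step and back, and irreducibility on $|\mathcal{X}|\ge 2$ prevents $Q = P^2$ from fixing $x$ forever unless the chain is periodic in a way that keeps it away from all other states. If needed, I would handle this case separately.
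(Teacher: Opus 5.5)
Your proof is correct and follows essentially the same route as the paper: the averaging bound $h(S)\le\max_{x\in S}\bigl(1-P^2(x,x)\bigr)\le 1-P^2(x^*,x^*)=h(\{x^*\})$, then a flow-balance (stationarity) argument across the cut $\{x^*\}$ to rule out $\pi(x^*)>1/2$. The paper phrases that last step as $h(U^{*\prime})>h(U^*)$, contradicting maximality of $U^*$, while you unroll it one step further to contradict maximality of $x^*$ directly. Your closing worry is easily settled. Only $P^2(x^*,x^*)<1$ is needed, and since $x^*$ maximises $1-P^2(x,x)$ this is equivalent to $P^2\neq I$, which the paper also derives from ergodicity. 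Even for a merely irreducible $P$, the sole exception is the two-state swap, where $\pi$ is uniform, so $\pi(x^*)=1/2$ and the conclusion holds anyway.
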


\begin{proof}
    For some non-empty $S \neq \mathcal{X}$, write 
    $$h(S) = \frac{1}{\pi(S)} \sum_{\substack{x \in S\\ y \in S'}} \pi(x)P^2(x,y) = \sum_{x \in S} \frac{\pi(x)}{\pi(S)} P^2(x,S').$$
    Then 
    \begin{align*}
        h(S) &\leq \max_{x \in S} P^2(x, S')\\
        &= \max_{x \in S} 1-P^2(x,S) \\
        &\leq \max_{x \in S} 1 - P^2(x,x).
    \end{align*}
    Now let $x^* \in \argmax_{0 < \pi(x) < 1} \left(1-P^2(x,x)\right).$ It must then be the case that 
    $$h(S) \leq 1 - P^2(x^*, x^*) = h(U^*),$$
    achieving the upper bound as claimed.
    
    Note that $h(U^*) > 0$. If $h(U^*) = 0$, then we have $P^2(x,x) = 1$ for all $x \in \mathcal{X}$. That is, $P^2 = I$, which contradicts that $P$ is ergodic.
    Assume the contrary such that $U^* \notin \mathcal{A}$, that is, $\pi(U^*) > \frac{1}{2}$. Consider
    \begin{align*}
        h(U^{*\prime}) &= \frac{1}{\pi(U^{*'})} \sum_{\substack{x \in U^{*'}\\ y \in U^{*}}} \pi(x)P^2(x,y) \\
        &= \frac{1}{\pi(U^{*'})} \sum_{\substack{x \in U^{*}\\ y \in U^{*'}}} \pi(x)P^2(x,y) \\
        &> \frac{1}{\pi(U^{*})} \sum_{\substack{x \in U^{*}\\ y \in U^{*'}}} \pi(x)P^2(x,y) = h(U^*),
    \end{align*}
    which contradicts $U^* \in \argmax_{S \neq \mathcal{X}, \emptyset} h(S)$. Note that the second equality above uses $P^2 \in \mathcal{S}(\pi)$, and the inequality makes use of $h(U^*) > 0$. 

\end{proof}

\begin{lemma}\label{lem:hg2h}
    Let $P \in \mathcal{S}(\pi)$. For any $S \in \mathcal{A}$, that is, $0 < \pi(S) \leq 1/2,$
    \begin{equation*}
        h(S) \leq g(S) \leq 2h(S),
    \end{equation*}
    where we recall $g, h$ are introduced in \eqref{eq:g(S)} and \eqref{eq:h(S)} respectively.
\end{lemma}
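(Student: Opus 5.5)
The plan is to compare the two functionals directly. Write
\[
Q(S) := \sum_{x \in S,\, y \in S'} \pi(x) P^2(x,y) \geq 0 .
\]
Then, by \eqref{eq:g(S)} (with $P^2$ in place of $P$) and \eqref{eq:h(S)},
\[
h(S) = \frac{Q(S)}{\pi(S)} \qquad \text{and} \qquad g(S) = \frac{Q(S)}{\pi(S)\pi(S')}.
\]
The two share the same numerator, so
\[
g(S) = \frac{h(S)}{\pi(S')} = \frac{h(S)}{1-\pi(S)}.
\]
This identity needs only that $S$ is non-trivial, so that $\pi(S), \pi(S') > 0$. Neither stationarity nor reversibility is used. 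It is the only structural step, and everything else is a bound on the scalar factor $1/(1-\pi(S))$.

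Now restrict to $S \in \mathcal{A}$, that is, $0 < \pi(S) \leq 1/2$. Then $\pi(S') = 1 - \pi(S) \in [1/2, 1)$, and therefore
\[
1 \leq \frac{1}{\pi(S')} \leq 2 .
\]
Since $h(S) \geq 0$, multiplying through by $h(S)$ preserves both inequalities and gives
\[
h(S) \leq g(S) \leq 2h(S),
\]
which is the claim.

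I expect no real obstacle. The only care points are the following.
\begin{itemize}
\item Check that $\pi(S') > 0$ for every $S \in \mathcal{A}$. This holds because $\pi(S) \leq 1/2 < 1$.
\item Use the nonnegativity of $h$, which ensures that multiplying by $h(S)$ does not flip the inequalities.
\item Note where the assumption $\pi(S) \leq 1/2$ enters. It is exactly what gives the upper constant $2$. Outside $\mathcal{A}$, the ratio $g/h$ can be arbitrarily large as $\pi(S) \to 1$.
\end{itemize}
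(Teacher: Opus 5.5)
Your proposal is correct and follows the paper's proof: both use the identity $g(S) = h(S)/\pi(S')$ together with $1/2 \leq \pi(S') < 1$ for $S \in \mathcal{A}$. Your explicit appeal to $h(S) \geq 0$ is the only detail the paper leaves implicit.
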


\begin{proof}
    The inequality holds from the fact that 
    $$g(S) = \frac{h(S)}{\pi(S')}\quad \mathrm{and}\quad 1/2 \leq \pi(S') < 1.$$
\end{proof}

\begin{proposition} \label{prop:1/2-approx}
    Consider some ergodic $P \in \mathcal{S}(\pi)$ and let $|\mathcal{X}| \geq 2$. Recall that $f, h$ are introduced in \eqref{eq:f(S)} and \eqref{eq:h(S)} respectively. Any solution
    $$U^* \in \argmax_{S\in \mathcal{A}} h(S)$$
    is an additive $\frac{1}{2}$-approximate minimiser of $f(S)$. That is, 
    $$S^* \in \argmin_{S\in \mathcal{A}} f(S)$$ 
    satisfies 
    $$f(U^*) - f(S^*) \leq \frac{1-f(S^*)}{2} \leq \frac{1}{2}.$$
\end{proposition}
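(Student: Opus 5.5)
The approach is to translate everything into the functional $g$ and then use the sandwich $h \le g \le 2h$ of Lemma \ref{lem:hg2h} on $\mathcal{A}$. By Proposition \ref{prop:GP argmin}, for every nontrivial $S$ we have $f(S) = 1 - g(S)$. The statement uses the $\pi$-reversible formula, so I will work under the standing reversibility assumption of this section. If only stationarity is assumed, the same computation still gives $\Tr(G_SP^2)-1 = 1-g(S)$ via Proposition \ref{prop:trace GP^2 = P^2bar}, and that is the quantity being compared. With this identity,
\begin{align*}
f(U^*) - f(S^*) = g(S^*) - g(U^*), \qquad 1 - f(S^*) = g(S^*).
\end{align*}
The claim therefore reduces to showing $g(S^*) - g(U^*) \le g(S^*)/2$, i.e. $g(U^*) \ge \tfrac12 g(S^*)$.

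The key step is a chain of inequalities on $\mathcal{A}$. Since both $S^*$ and $U^*$ lie in $\mathcal{A}$, Lemma \ref{lem:hg2h} applies to both. I would write
\begin{align*}
g(S^*) \le 2h(S^*) \le 2h(U^*) \le 2g(U^*),
\end{align*}
where the middle inequality uses that $U^*$ maximises $h$ over $\mathcal{A}$. This gives $g(U^*) \ge g(S^*)/2$ and hence the first inequality $f(U^*) - f(S^*) \le \frac{1 - f(S^*)}{2}$.

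For the final bound $\frac{1-f(S^*)}{2} \le \frac12$, it suffices that $f(S^*) \ge 0$, which holds because $f$ is a squared norm. One could also check directly that $g(S^*) \le 1$, which would give the slightly stronger conclusion.

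Existence of a maximiser of $h$ over $\mathcal{A}$ is not an issue, because $\mathcal{A}$ is finite and nonempty when $|\mathcal{X}|\ge 2$. Proposition \ref{prop:x* optimal} even exhibits one, namely a singleton, although this is not needed. The only delicate point is the middle step of the chain: it requires $U^*$ to maximise $h$ over $\mathcal{A}$ specifically, not over all sets, since Lemma \ref{lem:hg2h} needs $\pi(U^*) \le 1/2$. This is exactly how the hypothesis is phrased, and Proposition \ref{prop:x* optimal} confirms that the global maximiser over all nontrivial sets already lies in $\mathcal{A}$. I therefore expect no real obstacle beyond being careful that $f = 1-g$ is available under the stated assumptions.
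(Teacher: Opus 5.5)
Your argument is correct and is essentially the paper's proof. The paper combines $f = 1-g$ with Lemma \ref{lem:hg2h} and the maximality of $U^*$ in the form $f(U^*) \le 1-h(U^*)$ and $h(U^*) \ge h(S^*) \ge \frac{1-f(S^*)}{2}$, which is your chain $g(S^*) \le 2h(S^*) \le 2h(U^*) \le 2g(U^*)$ rearranged.

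One caveat concerns your aside on the merely stationary case. There $\|G_SP-\Pi\|_{F,\pi}^2 = \Tr(G_SPP^*)-1$, not $\Tr(G_SP^2)-1$, so the identity $f = 1-g$ genuinely requires $P \in \mathcal{L}(\pi)$. The paper's proof tacitly assumes the same, through Proposition \ref{prop:GP argmin}.
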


\begin{proof}
    By Lemma \ref{lem:hg2h}, we note that 
    \begin{equation} \label{eq:s* u*}
        f(S^*) = 1-g(S^*) \geq 1-2h(S^*), \quad f(U^*) \leq 1-h(U^*).
    \end{equation}
    By the definition of $U^*$ and \eqref{eq:s* u*} we thus have
    $$h(U^*) \geq h(S^*) \geq \frac{1-f(S^*)}{2}.$$
    This yields the left-most inequality in the statement:
    $$f(U^*) - f(S^*) \leq 1-h(U^*)-f(S^*) \leq \frac{1-f(S^*)}{2}.$$

    Noting that $f(S) = \|G_S P - \Pi\|_{F, \pi}^2 \geq 0$, the right-most inequality holds as well.  
\end{proof}

With Propositions \ref{prop:x* optimal} and \ref{prop:1/2-approx}, we show that while direct optimisation of the objective $\min_{S \neq \mathcal{X}, \emptyset} \|G_S P - \Pi\|_{F, \pi}^2$ is in general NP-hard, an approximate minimiser for the objective can be achieved by a singleton set consisting only of $x^*$ which maximises $1-P^2(x,x)$. This effectively reduces the search space from the problem $\argmax_{S \neq \mathcal{X}, \emptyset} g(S)$ which is exponential in $n = |\mathcal{X}|$ to $\argmax_{0 < \pi(x) < 1} \left(1-P^2(x,x)\right)$ which would be linear in $n$.

Lastly, in the case where we restrict $S$ to only singletons, we have the following result:

\begin{proposition}\label{prop:|S|=1GP}
    For $P \in \mathcal{L}(\pi)$, under the additional constraint $|S| = 1$, 
    $$\argmin_{\substack{S \neq \mathcal{X}\\ |S| = 1}} f(S) = \argmax_{\substack{S \neq \mathcal{X}\\ |S| = 1}} g(S) = \argmax_{x \in \mathcal{X}} \frac{1-P^2(x,x)}{1-\pi(x)},$$
    where we recall that $f, g$ are introduced in \eqref{eq:f(S)} and \eqref{eq:g(S)} respectively.
\end{proposition}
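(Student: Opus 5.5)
The plan is to combine the identity of Proposition \ref{prop:GP argmin} with a direct evaluation of $g$ on singletons. By Proposition \ref{prop:GP argmin}, for every non-trivial $S$ we have $f(S) = 1 - g(S)$, and this holds in particular for every singleton $S = \{x\}$. Since $x \mapsto 1-g(\{x\})$ is a strictly decreasing transformation of $g$, minimising $f$ over the constrained family $\{S \neq \mathcal{X} : |S| = 1\}$ is the same as maximising $g$ over that family. This gives the first equality at once. We assume $|\mathcal{X}| \geq 2$, so that singletons are non-trivial and $\pi(x) < 1$.

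For the second equality, fix $S = \{x\}$ and compute $g$ from its definition with $P^2$ in place of $P$. The numerator is
\begin{align*}
\sum_{y \neq x} \pi(x) P^2(x,y) = \pi(x)\left(1 - P^2(x,x)\right),
\end{align*}
which uses only that $P^2$ is stochastic. The denominator is $\pi(x)\pi(\{x\}') = \pi(x)(1-\pi(x))$. Since $\pi(x) > 0$, it cancels, and we get
\begin{align*}
g(\{x\}) = \frac{1 - P^2(x,x)}{1-\pi(x)}.
\end{align*}
Identifying the singleton $\{x\}$ with the point $x$ then turns $\argmax_{|S|=1} g(S)$ into $\argmax_{x \in \mathcal{X}} \frac{1-P^2(x,x)}{1-\pi(x)}$.

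None of these steps is really an obstacle. The only care needed is bookkeeping: the argmin and argmax sets should be identified via the bijection $x \leftrightarrow \{x\}$, and the denominator $1-\pi(x)$ must be nonzero, which holds because $|\mathcal{X}| \geq 2$ and $\pi$ has full support.

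Reversibility is used only through Proposition \ref{prop:GP argmin}, which supplies $f = 1-g$. The singleton computation itself needs only stationarity and stochasticity.
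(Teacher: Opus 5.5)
Your proposal is correct and matches the paper's proof. The paper likewise combines $f(S)=1-g(S)$ from Proposition~\ref{prop:GP argmin} with the direct evaluation $g(\{x\})=\frac{1-P^2(x,x)}{1-\pi(x)}$ from the definition of $g$, and your remark that $|\mathcal{X}|\geq 2$ is needed (so singletons are non-trivial and $1-\pi(x)\neq 0$) makes explicit an assumption the paper leaves implicit.
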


\begin{proof}
    This follows from the definition of $g(S)$ by setting $S = \{x\}$ for some $x \in \mathcal{X}$.
\end{proof}

\subsection{Case of $k=2$ for $GPG$} \label{sec:GPG_k=2}
In this subsection, we again let $\mathcal{X} = S \sqcup S'$, and we wish to minimise with respect to $S$ the objective $\|G_S PG_S - \Pi\|_{F, \pi}^2$. 

In the first result of this subsection, we give an upper bound on the discrepancy $(G_SPG_S)^l - \Pi$ under the Frobenius norm:
\begin{proposition}\label{prop:GSPGSlPi}
    For $P \in \mathcal{L}(\pi)$ and some subset $S \subset \mathcal{X}$, $S \neq \mathcal{X}, \emptyset$, $\mathcal{X} = S \sqcup S'$, it holds that for $l \in \mathbb{N}$,
    \begin{align*}
        \|(G_SPG_S)^l - \Pi\|_{F, \pi}^2 &= \lambda_2(G_SPG_S)^{2l} \\
        &= \lambda_2(\overline{P})^{2l} 
        = \left(1 - \frac{1}{\pi(S)\pi(S')}\sum_{\substack{x \in S\\ y \in S'}}  \pi(x)P(x,y) \right)^{2l}.
    \end{align*}
    In particular, when $P$ is additionally assumed to be positive-semidefinite, we have
    \begin{align*}
        \|(G_SPG_S)^l - \Pi\|_{F, \pi} &= \left(1 - \frac{1}{\pi(S)\pi(S')}\sum_{\substack{x \in S\\ y \in S'}}  \pi(x)P(x,y) \right)^{l} = \left(1-g(S,P)\right)^l,
    \end{align*}
    where we recall $g(S,P)$ is defined in \eqref{eq:g(S)}.
\end{proposition}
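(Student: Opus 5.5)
The plan is to reduce everything to the $2\times 2$ projection chain $\overline{P}$ via Corollary \ref{cor:norm GPG}, and then compute the spectrum of $\overline{P}$ explicitly.

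First, by Corollary \ref{cor:norm GPG},
\begin{align*}
\|(G_SPG_S)^l - \Pi\|_{F,\pi}^2 = \Tr\left(\overline{P}^{2l}\right) - 1 .
\end{align*}
Since $P \in \mathcal{L}(\pi)$, the projection $\overline{P}$ is $\overline{\pi}$-reversible. Its eigenvalues are therefore real; call them $1 = \lambda_1(\overline{P})$ and $\lambda_2(\overline{P})$. Hence $\Tr(\overline{P}^{2l}) - 1 = \lambda_2(\overline{P})^{2l}$.

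By Proposition 6.2 of \cite{GPG}, quoted earlier, the non-zero eigenvalues of $G_SPG_S$ coincide with those of $\overline{P}$ and the rest are $0$. This lets us identify $\lambda_2(G_SPG_S)^{2l}$ with $\lambda_2(\overline{P})^{2l}$. Some care is needed here when $\lambda_2(\overline{P}) < 0$, since then $\lambda_2(G_SPG_S)$ in the non-increasing ordering would be $0$ rather than $\lambda_2(\overline{P})$. I would either interpret $\lambda_2(G_SPG_S)$ as the eigenvalue of $G_SPG_S$ inherited from $\overline{P}$, or note that the even power makes the stated chain of equalities consistent. This bookkeeping is the only delicate point.

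Next I compute $\lambda_2(\overline{P})$ explicitly. For a $2\times 2$ stochastic matrix,
\begin{align*}
\lambda_2(\overline{P}) = \Tr(\overline{P}) - 1 = 1 - \overline{P}(1,2) - \overline{P}(2,1).
\end{align*}
Definition \eqref{eq:Pbar} gives $\overline{P}(1,2) = \pi(S)^{-1}\sum_{x\in S,\,y\in S'}\pi(x)P(x,y)$. By stationarity, $\overline{P}(2,1) = \pi(S')^{-1}\sum_{x\in S,\,y\in S'}\pi(x)P(x,y)$, using that the flow from $S$ to $S'$ equals the flow from $S'$ to $S$. Adding the two and using $\frac{1}{\pi(S)} + \frac{1}{\pi(S')} = \frac{1}{\pi(S)\pi(S')}$ gives
\begin{align*}
\lambda_2(\overline{P}) = 1 - g(S,P),
\end{align*}
which yields the displayed formula.

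For the positive-semidefinite case, I show that $\overline{P}$ is positive semidefinite on $\ell^2(\overline{\pi})$. For $f$ on $\llbracket 2 \rrbracket$, let $\tilde f$ be its lift, constant on blocks. Then
\begin{align*}
\langle \overline{P}f, f\rangle_{\overline{\pi}} = \langle P\tilde f, \tilde f\rangle_\pi \ge 0 .
\end{align*}
Hence $\lambda_2(\overline{P}) = 1 - g(S,P) \ge 0$. Taking square roots of $\lambda_2^{2l}$ then gives $(1 - g(S,P))^l$ without absolute values.
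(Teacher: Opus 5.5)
Your proposal is correct and follows essentially the paper's route: the spectral description of $G_SPG_S$ from Proposition 6.2 of \cite{GPG} (which you reach through Corollary \ref{cor:norm GPG}), then $\lambda_2(\overline{P}) = \Tr(\overline{P}) - 1$ combined with flow balance across the cut, then a positivity argument in the semidefinite case. There, your lifting argument that $\overline{P}$ is positive-semidefinite is an equivalent substitute for the paper's observation that $\langle f, G_SPG_S f\rangle_\pi = \langle G_S f, PG_S f\rangle_\pi \geq 0$. Your ordering caveat is well founded and is glossed over in the paper, but only your first resolution works: if $\lambda_2(\overline{P}) < 0$ and $|\mathcal{X}| \geq 3$, the non-increasing convention gives $\lambda_2(G_SPG_S) = 0$ while the Frobenius distance equals $\lambda_2(\overline{P})^{2l} > 0$, so the even power does not repair the middle equality, and $\lambda_2(G_SPG_S)$ must be read as the eigenvalue inherited from $\overline{P}$.
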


\begin{proof}
    By Proposition 6.2 of \cite{GPG}, the eigenvalues of $G_S P G_S$ are exactly 
    \begin{align*}
        &\lambda_1(G_SPG_S) = 1,\ \lambda_2(G_SPG_S) = \lambda_2(\overline{P}),\\ &\lambda_3(G_SPG_S) = \ldots = \lambda_n(G_SPG_S) = 0.
    \end{align*}
    It follows that the only non-zero eigenvalue of $(G_SPG_S)^l - \Pi$ is $\lambda_2(\overline{P})^l$, and by the definition of the Frobenius norm given that $G_SPG_S \in \mathcal{L}(\pi)$, 
    $$\|(G_SPG_S)^l - \Pi\|_{F, \pi}^2 = \Tr\left(((G_SPG_S)^l - \Pi)^2\right) = \lambda_2(\overline{P})^{2l}.$$

    One can then verify that 
    \begin{align*}
        \lambda_2(\overline{P}) &= \frac{1}{\pi(S)} \sum_{x, y \in S} \pi(x) P(x,y) + \frac{1}{\pi(S')} \sum_{x, y \in \mathcal{S'}} \pi(x) P(x,y) - 1\\
        &= 1 - \frac{1}{\pi(S)}\sum_{\substack{x \in S\\ y \in S'}}  \pi(x)P(x,y) - \frac{1}{\pi(S')}\sum_{\substack{x \in S'\\ y \in S}} \pi(x)P(x,y)\\
        &= 1 - \frac{1}{\pi(S)}\sum_{\substack{x \in S\\ y \in S'}}  \pi(x)P(x,y) - \frac{1}{\pi(S')}\sum_{\substack{x \in S'\\ y \in S}} \pi(y)P(y,x)\\
        &= 1 - \frac{1}{\pi(S)\pi(S')}\sum_{\substack{x \in S\\ y \in S'}}  \pi(x)P(x,y),
    \end{align*}
    where we use the $\pi$-reversibility of $P$ in the third equality.
    
    If $P$ is positive-semidefinite, then for any $f \in \ell^2(\pi)$, 
    $$\langle f, G_S P G_S f\rangle_\pi = \langle G_S f, PG_S f \rangle_\pi \geq 0,$$
    which implies that $G_SPG_S$ is also positive-semidefinite. It follows that $0 \leq \lambda_2(G_S P G_S) \leq 1$ and the last part of the claim holds. 
\end{proof}

We thus see that, for $l \in \mathbb{N}$, we have
\begin{align*}
    \argmin_{S \neq \mathcal{X}, \emptyset} \|(G_SPG_S)^l - \Pi\|_{F, \pi}^2 = \argmin_{S \neq \mathcal{X}, \emptyset} \|G_SPG_S - \Pi\|_{F, \pi}^2.
\end{align*}
Therefore, in the remainder of this paper, the above equality justifies the decision for us to focus on minimising the one-step deviation from stationarity, $G_S P G_S - \Pi$, measured in the squared-Frobenius norm.

In the special case of $l = 1$, we obtain the following corollary:
\begin{corollary} \label{cor:GPG frob}
    For any $\pi$-reversible $P$ and fixed $S \subset \mathcal{X}$, $S \neq \mathcal{X}, \emptyset$, we have
    $$\|G_SPG_S - \Pi\|_{F, \pi}^2 = \left(1 - \frac{1}{\pi(S)\pi(S')}\sum_{\substack{x \in S\\ y \in S'}}  \pi(x)P(x,y)\right)^2.$$
    In particular, when one takes $S^*$ to be a symmetrised Cheeger's cut of $P$, then we have
    \begin{align*}
        \|G_{S^*} P G_{S^*} - \Pi\|_{F, \pi}^2 = \left(1 - \phi^*(P)\right)^2.
    \end{align*}
\end{corollary}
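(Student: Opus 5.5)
The first claim is the case $l=1$ of Proposition \ref{prop:GSPGSlPi}, so I will simply specialise it. For $\pi$-reversible $P$ and $S \neq \mathcal{X}, \emptyset$, that proposition gives
\begin{align*}
\|G_SPG_S - \Pi\|_{F,\pi}^2 = \lambda_2(\overline{P})^2 = \left(1 - \frac{1}{\pi(S)\pi(S')}\sum_{\substack{x \in S\\ y \in S'}} \pi(x)P(x,y)\right)^2 .
\end{align*}
The only ingredients are that $G_SPG_S$ is self-adjoint in $\ell^2(\pi)$ with spectrum $\{1,\lambda_2(\overline{P}),0,\dots,0\}$ (Proposition 6.2 of \cite{GPG}), and the explicit formula for $\lambda_2(\overline{P})$ as $\Tr(\overline{P})-1$, simplified using reversibility. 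Both are already established in the proof of Proposition \ref{prop:GSPGSlPi}, so nothing new is required for this part.

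For the second claim, I note that the bracketed quantity is exactly $1-g(S,P)$ with $g$ as in \eqref{eq:g(S)}. I then take $S^*$ to be a symmetrised Cheeger's cut of $P$, that is, a minimiser of $g(\cdot,P)$ over $0<\pi(S)<1$. Since $\pi$ has strictly positive masses, this range is exactly the range of nonempty proper subsets. By definition of $\phi^*$, we have $g(S^*,P) = \phi^*(P)$. Substituting into the first display yields $\|G_{S^*}PG_{S^*}-\Pi\|_{F,\pi}^2 = (1-\phi^*(P))^2$.

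There is no real obstacle here. The only point needing care is that $S^*$ is admissible, meaning $S^* \neq \emptyset,\mathcal{X}$, so that the first formula applies. This follows from the constraint $0<\pi(S^*)<1$ together with the positivity of $\pi$.
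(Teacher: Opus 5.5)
Your proposal is correct and matches the paper's argument: the first identity is Proposition \ref{prop:GSPGSlPi} with $l=1$, and the second follows by substituting $g(S^*,P)=\phi^*(P)$ for a symmetrised Cheeger's cut $S^*$. You describe the spectrum of $G_SPG_S$ as the multiset $\{1,\lambda_2(\overline{P}),0,\dots,0\}$ rather than as an ordered list, which is slightly more careful than the paper, since when $\lambda_2(\overline{P})<0$ it need not be the second-largest eigenvalue of $G_SPG_S$.
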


The result above shows that the Frobenius norm of $G_S P G_S - \Pi$ depends explicitly on the one-step transition kernel $P$, in contrast to the objective for $G_S P - \Pi$ in Proposition \ref{prop:GP argmin}, which depends on the two-step transition $P^2$. This difference can be traced to the identities
$$\|G_SP\|_{F, \pi}^2 = \Tr(G_S P^2)=\Tr(\overline{P^2}) \quad \text{and} \quad \|G_SPG_S\|_{F, \pi}^2 = \Tr((G_S P G_S)^2)=\Tr\left(\overline{P}^2\right),$$
so that the former aggregates two-step transition behaviour before projection, while the latter projects the one-step dynamics directly. We note that in general, $\overline{P^2}\neq \overline{P}^2$, and hence these two quantities encode different information about the underlying chain.

Analogous to Proposition \ref{prop:|S|=1GP} for $G_S P$, we consider the case of restricting $S$ to be a singleton in the context of $G_S P G_S$:
\begin{corollary}
    For $P \in \mathcal{L}(\pi)$, under the additional constraint of $|S| = 1$, 
    $$\argmin_{\substack{S \neq \mathcal{X}, \emptyset\\ |S|=1}} \|G_SPG_S - \Pi\|_{F, \pi}^2 = \argmin_{x \in \mathcal{X}} \bigg(\frac{P(x,x) - \pi(x)}{1-\pi(x)}\bigg)^2.$$
\end{corollary}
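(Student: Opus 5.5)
The plan is to specialise Corollary \ref{cor:GPG frob} to singletons $S=\{x\}$ and then simplify the cut term using the row-sum identity for $P$. Fix $x\in\mathcal{X}$ with $S=\{x\}\neq\mathcal{X}$, which forces $|\mathcal{X}|\ge 2$ and $0<\pi(x)<1$, so the corollary applies. With $S'=\mathcal{X}\setminus\{x\}$ it gives
\begin{align*}
\|G_SPG_S-\Pi\|_{F,\pi}^2=\left(1-\frac{\pi(x)\sum_{y\neq x}P(x,y)}{\pi(x)(1-\pi(x))}\right)^2 .
\end{align*}

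Since each row of $P$ sums to one, we have $\sum_{y\neq x}P(x,y)=1-P(x,x)$. The factor $\pi(x)$ cancels, and the bracket becomes
$$1-\frac{1-P(x,x)}{1-\pi(x)}=\frac{(1-\pi(x))-(1-P(x,x))}{1-\pi(x)}=\frac{P(x,x)-\pi(x)}{1-\pi(x)}.$$
Squaring this recovers exactly the displayed objective.

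The singleton constraint gives a bijection $x\leftrightarrow\{x\}$ between $\mathcal{X}$ and the feasible sets. The objective value at $\{x\}$ equals the function of $x$ in the statement, so the two argmin sets coincide under this identification.

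The only point needing care is the justification of Corollary \ref{cor:GPG frob}, which uses $\pi$-reversibility to symmetrise the two cross terms in $\lambda_2(\overline{P})$. Here this is automatic, because $P\in\mathcal{L}(\pi)$ is assumed. I expect no real obstacle: the argument mirrors Proposition \ref{prop:|S|=1GP}, with $P$ in place of $P^2$ and a square appearing because the Frobenius distance of $G_SPG_S$ is $\lambda_2(\overline{P})^2$ rather than linear in $g$.
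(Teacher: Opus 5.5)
Your proposal is correct and is essentially the paper's argument: the paper gives no separate proof, treating the result as the specialisation of Corollary~\ref{cor:GPG frob} to $S=\{x\}$. That specialisation is exactly your computation, combining the row-sum identity $\sum_{y\neq x}P(x,y)=1-P(x,x)$ with the identification $x\leftrightarrow\{x\}$ (valid since $S\neq\mathcal{X}$ forces $|\mathcal{X}|\ge 2$ and hence $0<\pi(x)<1$).
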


Collecting Proposition \ref{prop:GP argmin} and Proposition \ref{prop:GSPGSlPi}, we see that for positive-semidefinite $P \in \mathcal{L}(\pi)$, we have
\begin{align*}
    \|G_SP - \Pi\|_{F, \pi}^2 &= 1 - g(S,P^2), \\
    \|G_SPG_S - \Pi\|_{F, \pi} &= 1 - g(S,P). 
\end{align*}
Interestingly, the former is governed by $P^2$ while the latter depends on $P$. Therefore, replacing $P^2$ by $P$ in the analysis that leads to Proposition \ref{prop:1/2-approx} yields a $\frac{1}{2}$-approximate minimiser of $\|G_SPG_S - \Pi\|_{F, \pi}$. The proof is omitted as it is the same as that of Proposition \ref{prop:1/2-approx} with $P^2$ therein replaced by $P$.
\begin{proposition} \label{prop:1/2-approxGPG}
    Assume that $P$ is $\pi$-reversible, ergodic and positive-semidefinite, $|\mathcal{X}| \geq 2$ and recall that $h$ is introduced in \eqref{eq:h(S)}. Any solution
    $$U^* = \{x^*(P)\} \in \argmax_{S\in \mathcal{A}} h(S,P)$$
    with $x^*(P)$ as in Proposition \ref{prop:x* optimal} is an additive $\frac{1}{2}$-approximate minimiser of $\|G_SPG_S - \Pi\|_{F, \pi}$. That is, 
    $$S^* \in \argmin_{S\in \mathcal{A}} \|G_S P G_S - \Pi\|_{F, \pi}$$ 
    satisfies 
    $$\|G_{U^*}PG_{U^*} - \Pi\|_{F, \pi} - \|G_{S^*}PG_{S^*} - \Pi\|_{F, \pi} \leq \frac{1}{2}.$$
\end{proposition}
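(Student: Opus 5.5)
The plan is to mirror the proof of Proposition \ref{prop:1/2-approx}, with the objective $F(S) := \|G_SPG_S - \Pi\|_{F,\pi}$ in place of $f$ and the one-step kernel $P$ in place of $P^2$. Positive-semidefiniteness enters through Proposition \ref{prop:GSPGSlPi} with $l=1$, which gives the linear identity
\begin{align*}
F(S) = 1 - g(S,P), \qquad S \neq \mathcal{X},\emptyset .
\end{align*}
The quantity $1-g(S,P)=\lambda_2(G_SPG_S)$ is nonnegative, so no absolute value or square intervenes. This identity is what makes the additive argument go through unchanged.

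First I check that $U^* = \{x^*(P)\}$ is a legitimate maximiser of $h(\cdot,P)$ over $\mathcal{A}$. Proposition \ref{prop:x* optimal} is stated for $P^2$, but its proof uses only that the kernel is ergodic and $\pi$-stationary. Repeating it with $P$ gives $h(S,P) \le \max_{x\in S}(1-P(x,x)) \le 1-P(x^*,x^*) = h(U^*,P)$. It also gives $h(U^*,P)>0$: otherwise $P=I$, contradicting ergodicity. The complement argument then shows $U^*\in\mathcal{A}$. Hence $U^*$ maximises $h(\cdot,P)$ over all nontrivial $S$, and in particular over $\mathcal{A}$.

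Next I apply Lemma \ref{lem:hg2h} with $P$ in place of $P^2$, which only requires $P\in\mathcal{S}(\pi)$. For $S\in\mathcal{A}$ it gives $h(S,P)\le g(S,P)\le 2h(S,P)$. Two bounds follow:
\begin{align*}
F(S^*) &= 1-g(S^*,P) \ge 1-2h(S^*,P), \\
F(U^*) &= 1-g(U^*,P) \le 1-h(U^*,P).
\end{align*}
By maximality, $h(U^*,P)\ge h(S^*,P)\ge (1-F(S^*))/2$, so
\begin{align*}
F(U^*)-F(S^*) \le 1-F(S^*)-\tfrac{1-F(S^*)}{2} = \tfrac{1-F(S^*)}{2} \le \tfrac12 .
\end{align*}
The last step uses $F(S^*)\ge 0$, which is immediate for a norm.

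The only genuine obstacle is the first step: justifying $F(S)=1-g(S,P)$ rather than $|1-g(S,P)|$. This relies on Proposition \ref{prop:GSPGSlPi}, where positive-semidefiniteness of $P$ forces $G_SPG_S\succeq 0$ and hence $\lambda_2(\overline{P})\ge 0$. Without that assumption the map $S\mapsto F(S)$ is not affine in $g$, and the chain of inequalities above would break. A secondary point is that the statement minimises over $\mathcal{A}$. By the symmetry $g(S,P)=g(S',P)$, noted in the proof for $f$, this is the same as minimising over all nontrivial $S$, so restricting to $\mathcal{A}$, where Lemma \ref{lem:hg2h} applies, loses nothing.
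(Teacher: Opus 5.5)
Your proposal is correct and matches the paper's approach: the paper omits the proof, saying only that it is the proof of Proposition~\ref{prop:1/2-approx} with $P^2$ replaced by $P$. Your argument does exactly that, resting on the identity $\|G_SPG_S-\Pi\|_{F,\pi}=1-g(S,P)$ from Proposition~\ref{prop:GSPGSlPi} under positive-semidefiniteness, together with Lemma~\ref{lem:hg2h} and the singleton-maximiser argument, both applied to $P$.
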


\subsection{A recursive construction for minimising the Frobenius norm}
Here, we describe a recursive construction that sequentially isolates singletons and applies
group-averaging on the remaining subspace. The construction produces a sequence of Frobenius distances computed on nested reduced state spaces, together with a recursion that bounds the distance at level $i$ in terms of the distance at level $i+1$.

Let the state space be $\mathcal{X} = \llbracket n\rrbracket$, and let $P$ be a $\pi$-reversible, ergodic, and positive-semidefinite Markov kernel on $\mathcal{X}$. At the first step, we partition $\mathcal{X}$ into
$$\mathcal{X}=S_1\sqcup S_1', \qquad S_1=\{1\}.$$

By convention, we write $\pi_1 = \pi.$ With Proposition \ref{prop:GSPGSlPi}, the Frobenius distance associated with this partition satisfies
$$\|G_{S_1}PG_{S_1}-\Pi_1\|_{F,\pi_1} = \frac{P(1,1)-\pi_1(1)}{1-\pi_1(1)}.$$

We now restrict attention to the complement $S_1'$, endowed with the conditional distribution
$$\pi_2(x) := \frac{\pi_1(x)}{1-\pi_1(1)}, \qquad x\in S_1'.$$

Repeating the same construction on $S_1'$, we split
$$S_1' = S_2 \sqcup S_2', \qquad S_2=\{2\},$$
and for any $\pi_2$-reversible, ergodic, positive-semidefinite sampler $P_2$, we have
$$\|G_{S_2}P_2G_{S_2}-\Pi_2\|_{F,\pi_2} = \frac{P_2(2,2)-\pi_2(2)}{1-\pi_2(2)},$$
where for any $i \in \llbracket n-1 \rrbracket$, $\Pi_i$ is the transition matrix whose rows are all $\pi_i.$

The recursive nature of the construction follows from the fact that $G_{S_1}$ can be written as the limit of the next constructed $G_{S_2}P_2G_{S_2}$. Concretely,

$$G_{S_1} =
\begin{pmatrix}
1 & 0\\
0 & \Pi_2
\end{pmatrix}
=
\lim_{l\to\infty}
\begin{pmatrix}
1 & 0\\
0 & (G_{S_2}P_2G_{S_2})^l
\end{pmatrix}.
$$

Defining
\begin{equation}\label{eq:A_l}
    A_l := \begin{pmatrix}
    1 & 0\\
    0 & (G_{S_2}P_2G_{S_2})^l
    \end{pmatrix}, 
\end{equation}
we may write
$$\|A_l P A_l - \Pi_1\|_{F,\pi_1} = \|A_l P (A_l-G_{S_1}) + (A_l-G_{S_1})PG_{S_1} + G_{S_1}PG_{S_1}-\Pi_1\|_{F,\pi_1},
$$
which tends towards the limit $\|G_{S_1}PG_{S_1}-\Pi_1\|_{F,\pi_1}$ as $l$ approaches $\infty$.

\begin{lemma} \label{lemma:A_l-G_S}
    Let $A_l$ be defined as per \eqref{eq:A_l} with $P_2$ being $\pi_2$-reversible. Then 
    $$\| A_l - G_{S_1}\|_{F,\pi_1}^2 = \|(G_{S_2}P_2G_{S_2})^l - \Pi_2\|_{F, \pi_2}^2.$$
\end{lemma}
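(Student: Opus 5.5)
The plan is to compute the matrix $A_l - G_{S_1}$ directly in block form and then evaluate its $\pi_1$-weighted Frobenius norm. With the convention that state $1$ comes first, we have
\begin{align*}
    A_l - G_{S_1} = \begin{pmatrix} 0 & 0\\ 0 & (G_{S_2}P_2G_{S_2})^l - \Pi_2 \end{pmatrix} =: \begin{pmatrix} 0 & 0\\ 0 & B \end{pmatrix}.
\end{align*}
Here the top-left entries cancel because both $A_l$ and $G_{S_1}$ have a $1$ there, since $\{1\}$ is its own orbit under $G_{S_1}$. The off-diagonal blocks vanish in both matrices. The lower-right block of $G_{S_1}$ is exactly $\Pi_2$, because $G_{S_1}(x,y) = \pi(y)/\pi(S_1') = \pi_2(y)$ for $x,y \in S_1'$.

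Next I would write out the Frobenius norm using the adjoint formula $M^*(x,y) = \pi(y)M(y,x)/\pi(x)$. For any $M$ we have $\|M\|_{F,\pi}^2 = \sum_{x,z} \frac{\pi(z)}{\pi(x)} M(z,x)^2$, as in the positive-definiteness computation in Section \ref{sec:prelim}. Applied to $M = A_l - G_{S_1}$ with $\pi = \pi_1$, only indices $x,z \in S_1'$ contribute. On $S_1'$ the ratio satisfies $\pi_1(z)/\pi_1(x) = \pi_2(z)/\pi_2(x)$, because $\pi_2$ is just $\pi_1$ rescaled by the constant $1/(1-\pi_1(1))$. Hence
\begin{align*}
    \|A_l - G_{S_1}\|_{F,\pi_1}^2 = \sum_{x,z \in S_1'} \frac{\pi_2(z)}{\pi_2(x)} B(z,x)^2 = \|B\|_{F,\pi_2}^2,
\end{align*}
which is the claim.

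The only point requiring care is checking that the $\pi_1$-adjoint restricted to the block equals the $\pi_2$-adjoint, which is the ratio-invariance noted above. Reversibility of $P_2$ is not really needed for the identity itself; it only ensures that the objects are the natural ones from Proposition \ref{prop:GSPGSlPi}. I do not expect a substantive obstacle; the main thing is to state clearly the indexing convention that puts $S_1 = \{1\}$ first.
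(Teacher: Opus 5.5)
Your proposal is correct and follows essentially the same route as the paper: both write $A_l - G_{S_1}$ in block form with only the lower-right block $(G_{S_2}P_2G_{S_2})^l - \Pi_2$ nonzero, expand $\|\cdot\|_{F,\pi_1}^2$ entrywise as a sum of $\frac{\pi_1(x)}{\pi_1(y)}M(x,y)^2$ over $x,y \in S_1'$, and use that these ratios coincide with $\frac{\pi_2(x)}{\pi_2(y)}$. Your remark that reversibility of $P_2$ is not needed for the identity is also accurate, since the paper's argument does not use it either.
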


\begin{proof}
    Since 
    $$A_l - G_{S_1} = \begin{pmatrix}
    0 & 0\\
    0 & (G_{S_2}P_2G_{S_2})^l - \Pi_2
    \end{pmatrix}$$
    it follows that 
    \begin{align}
        \| A_l - G_{S_1}\|_{F,\pi_1}^2 &= \sum_{x,y \in S_1'} \frac{\pi_1(x)}{\pi_1(y)}\left((G_{S_2}P_2G_{S_2})^l(x,y) - \Pi_{S_1'}(x,y)\right)^2\\
        &= \sum_{x,y \in S_1'} \frac{\pi_2(x)}{\pi_2(y)}\left((G_{S_2}P_2G_{S_2})^l(x,y) - \Pi_{S_1'}(x,y)\right)^2 \\
        &= \|(G_{S_2}P_2G_{S_2})^l - \Pi_{2}\|_{F, \pi_2}^2.
    \end{align}
\end{proof}

With this, we obtain an upper bound for $\|A_l P_1 A_l -\Pi_1\|_{F, \pi_1}.$ 

\begin{proposition}\label{prop:normA_lPA_l}
    For $P_1$ that is $\pi_1$-reversible and $P_2$ which is $\pi_2$-reversible and positive-semidefinite, 
    $$\|A_l P_1 A_l -\Pi_1\|_{F, \pi_1} \leq 2\|(G_{S_2}P_2G_{S_2})^l - \Pi_{2}\|_{F, \pi_2} + \|G_{S_1}P_1G_{S_1} - \Pi_1\|_{F, \pi_1}.$$
\end{proposition}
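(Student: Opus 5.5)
The plan is to start from the decomposition displayed just before Lemma \ref{lemma:A_l-G_S}, namely
$$A_l P_1 A_l - \Pi_1 = A_l P_1 (A_l-G_{S_1}) + (A_l-G_{S_1})P_1G_{S_1} + \left(G_{S_1}P_1G_{S_1}-\Pi_1\right),$$
and apply the triangle inequality for $\|\cdot\|_{F,\pi_1}$. The third term is already the last term of the claimed bound. For the first two terms, we use the Hilbert--Schmidt inequalities recalled earlier:
$$\|\mathcal{T}\mathcal{S}\|_{\mathrm{HS}}\le \|\mathcal{T}\|_{\mathrm{op}}\|\mathcal{S}\|_{\mathrm{HS}},\qquad \|\mathcal{S}\mathcal{T}\|_{\mathrm{HS}}\le \|\mathcal{S}\|_{\mathrm{HS}}\|\mathcal{T}\|_{\mathrm{op}}.$$
These give
$$\|A_lP_1(A_l-G_{S_1})\|_{F,\pi_1}\le \|A_l\|_{\mathrm{op}}\|P_1\|_{\mathrm{op}}\|A_l-G_{S_1}\|_{F,\pi_1}$$
and
$$\|(A_l-G_{S_1})P_1G_{S_1}\|_{F,\pi_1}\le \|A_l-G_{S_1}\|_{F,\pi_1}\|P_1\|_{\mathrm{op}}\|G_{S_1}\|_{\mathrm{op}}.$$
Here all operator norms are taken on $\ell^2(\pi_1)$.

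Next we bound the operator norms by $1$. Since $P_1$ and $G_{S_1}$ are $\pi_1$-reversible Markov kernels, we have $\|P_1\|_{\mathrm{op}}\le 1$ and $\|G_{S_1}\|_{\mathrm{op}}\le 1$. For $A_l$ we use its block structure: $A_l$ acts as the identity on the coordinate $1$ and as $(G_{S_2}P_2G_{S_2})^l$ on $S_1'$. Because $\pi_1$ restricted to $S_1'$ is a constant multiple of $\pi_2$, the space $\ell^2(\pi_1)$ decomposes orthogonally as $\ell^2(\pi_1|_{\{1\}})\oplus \ell^2(\pi_1|_{S_1'})$. The second summand carries the norm of $\ell^2(\pi_2)$ up to a scalar factor, which does not affect operator norms. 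Hence
$$\|A_l\|_{\mathrm{op}}=\max\{1,\|(G_{S_2}P_2G_{S_2})^l\|_{\mathrm{op},\pi_2}\}.$$
The kernel $G_{S_2}P_2G_{S_2}$ is $\pi_2$-reversible and a Markov kernel, so its $\ell^2(\pi_2)$ operator norm is at most $1$. Therefore $\|A_l\|_{\mathrm{op}}\le 1$.

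Finally, Lemma \ref{lemma:A_l-G_S} identifies $\|A_l-G_{S_1}\|_{F,\pi_1}$ with $\|(G_{S_2}P_2G_{S_2})^l-\Pi_2\|_{F,\pi_2}$. Combining the three term bounds then yields the stated inequality with constant $2$.

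The only step needing care is the operator-norm bound for the block matrix $A_l$, in particular checking that the weighted inner product genuinely splits orthogonally across $\{1\}$ and $S_1'$. This holds because $\pi_1$ is diagonal as a weight, so there is no cross term. Positive-semidefiniteness of $P_2$ is not essential here; it merely guarantees the eigenvalues lie in $[0,1]$, which is consistent with the operator norm bound.
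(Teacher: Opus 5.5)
Your proof is correct and has the same skeleton as the paper's: the three-term decomposition, the triangle inequality, and Lemma \ref{lemma:A_l-G_S} to rewrite $\|A_l-G_{S_1}\|_{F,\pi_1}$. The difference is in how the two cross terms are bounded.

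The paper expands $\|A_lP_1(A_l-G_{S_1})\|_{F,\pi_1}^2=\Tr\bigl(P_1A_l^2P_1(A_l-G_{S_1})^2\bigr)$. This uses self-adjointness of $P_1$, $A_l$ and $G_{S_1}$ in $\ell^2(\pi_1)$. It then applies Von Neumann's trace inequality with $\sigma_i(P_1A_l^2P_1)\le 1$. For that step the singular values must be read in the $\pi_1$-weighted sense, i.e.\ after conjugating by $\mathrm{diag}(\pi_1)^{1/2}$. The contraction bound on $A_l$ is also used there without comment.

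You instead use $\|\mathcal{T}\mathcal{S}\|_{\mathrm{HS}}\le\|\mathcal{T}\|_{\mathrm{op}}\|\mathcal{S}\|_{\mathrm{HS}}$ and its mirror image, which the paper already invokes when comparing $GPG$, $PG$ and $GP$ in Frobenius norm. This is intrinsic to $\ell^2(\pi_1)$, avoids the weighting issue, and makes $\|A_l\|_{\mathrm{op}}\le 1$ explicit. Your block-diagonal argument for that bound is fine. A quicker route is to note that $A_l$ is itself a $\pi_1$-stationary Markov kernel.

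Your route also shows the hypotheses can be weakened. Stationarity alone gives operator norm at most $1$ by Jensen's inequality, and the lemma is an entrywise computation, so reversibility could be replaced by stationarity throughout. As you observe, positive-semidefiniteness of $P_2$ plays no role in this inequality, even though the paper's proof cites it.
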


\begin{proof}
    By the subadditivity of norms,
    $$\|A_l P_1 A_l - \Pi_1\|_{F,\pi_1} \leq \|A_l P_1 (A_l-G_{S_1})\|_{F,\pi_1} + \|(A_l-G_{S_1})P_1G_{S_1}\|_{F,\pi_1} + \|G_{S_1}P_1G_{S_1}-\Pi_1\|_{F,\pi_1}.$$

    Let $\sigma_1(M) \geq \sigma_2(M) \geq \cdots \geq \sigma_n(M)$ denote the singular values of a matrix $M \in \mathbb{R}^{n \times n}$. It then follows 
    \begin{align*}
        \|A_l P_1 (A_l-G_{S_1})\|_{F,\pi_1}^2 &= \Tr((A_l-G_{S_1}) P_1 A_l^2 P_1 (A_l-G_{S_1}))\\
        &= \Tr(P_1 A_l^2 P_1 (A_l-G_{S_1})^2)\\
        &\leq \sum_{i=1}^n \sigma_i\left(P_1 A_l^2 P_1\right) \ \sigma_i \left((A_l-G_{S_1})^2\right)\\
        & \leq \sum_{i=1}^n\sigma_i \left((A_l-G_{S_1})^2\right)\\
        &= \Tr\left((A_l-G_{S_1})^2\right) \\
        &= \| A_l - G_{S_1}\|_{F,\pi_1}^2,
    \end{align*}
    where the first inequality follows from Von Neumann's trace inequality (a proof is given in \cite{mirsky}). 

    Equivalently, $\|(A_l-G_{S_1})P_1G_{S_1}\|_{F,\pi_1}^2 \leq \| A_l - G_{S_1}\|_{F,\pi_1}^2$.  With the results of the preceding Lemma \ref{lemma:A_l-G_S} and the fact that $P_2$ is positive-semidefinite, the conclusion holds.
\end{proof}

We formally describe the recursive construction below.\\

\begin{proposition}
Assume $P$ is $\pi$-reversible, ergodic, and positive-semidefinite on
$\mathcal X=\llbracket n\rrbracket$.

Set $\mathcal X_1:=\mathcal X$, $\pi_1:=\pi$, and $P_1:=P$.
For $i \in \llbracket n-1 \rrbracket$, given the current state space $\mathcal X_i$ with stationary distribution $\pi_i$ and Markov kernel $P_i$, perform:
\begin{enumerate}
    \item Choose a singleton
    $$S_i=\{x_i\}\subset \mathcal X_i \quad\text{with}\quad x_i\in\argmin_{x\in\mathcal X_i} \frac{P_i(x,x)-\pi_i(x)}{1-\pi_i(x)}.
    $$
    \item Let $\mathcal X_{i+1}:=\mathcal X_i\setminus S_i$ and
    set $\pi_{i+1}$ to be the conditional distribution on $\mathcal X_{i+1}$:
    $$
    \pi_{i+1}(y):=\frac{\pi_i(y)}{\pi_i(\mathcal X_{i+1})},\qquad y\in\mathcal X_{i+1}.
    $$
    \item Set $P_{i+1}$ to be any $\pi_{i+1}$-reversible, ergodic, and positive-semidefinite sampler of $\mathcal{X}_{i+1}$ (e.g. a natural restriction or a freshly constructed sampler).
\end{enumerate}
Then at each iteration $i$, the chosen singleton $S_i=\{x_i\}$ attains the smallest Frobenius distance from stationarity among all singleton splits of $\mathcal X_i$.
\end{proposition}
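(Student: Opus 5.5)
The claim is local to each iteration, so I will fix $i \in \llbracket n-1 \rrbracket$ and work entirely on $\mathcal X_i$ with $\pi_i$ and $P_i$. By construction $P_i$ is $\pi_i$-reversible, ergodic and positive-semidefinite: for $i=1$ this is the hypothesis on $P$, and for $i\ge 2$ it is step 3. I then apply Proposition \ref{prop:GSPGSlPi} (with $l=1$, positive-semidefinite case) on the state space $\mathcal X_i$, for an arbitrary singleton $S=\{x\}$ with $x \in \mathcal X_i$. This requires $S \neq \emptyset, \mathcal X_i$, which holds because $|\mathcal X_i| = n-i+1 \geq 2$ for $i \leq n-1$. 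It gives
$$\|G_{\{x\}}P_iG_{\{x\}}-\Pi_i\|_{F,\pi_i} = 1 - g(\{x\},P_i).$$

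Next I compute $g$ on a singleton. Since $S'=\mathcal X_i\setminus\{x\}$, we have
$$\sum_{y\in S'}\pi_i(x)P_i(x,y)=\pi_i(x)\bigl(1-P_i(x,x)\bigr),$$
so
$$g(\{x\},P_i)=\frac{1-P_i(x,x)}{1-\pi_i(x)}.$$
Here $\pi_i(x)<1$ because $\pi_i$ has strictly positive masses on at least two points. Hence
$$1-g(\{x\},P_i)=\frac{P_i(x,x)-\pi_i(x)}{1-\pi_i(x)},$$
which matches the singleton formula used in the recursive construction above. Because the Frobenius norm is nonnegative, this quantity is $\ge 0$; that is consistent with positive-semidefiniteness and also follows from the proposition.

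The Frobenius distance of the singleton split $\{x\}$ is therefore exactly the function being minimised in step 1. Any $x_i$ in that argmin therefore attains the smallest Frobenius distance among all singleton splits of $\mathcal X_i$. Equivalently, this is the corollary on singleton restrictions for $G_SPG_S$ applied to $P_i$, noting that minimising a nonnegative quantity and minimising its square give the same argmin.

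The only point that needs care is checking that the hypotheses of Proposition \ref{prop:GSPGSlPi} hold at every level: reversibility and positive-semidefiniteness of $P_i$ with respect to $\pi_i$ (guaranteed by step 3), and nontriviality of the split (guaranteed by $|\mathcal X_i|\ge 2$). Nothing about how $P_{i+1}$ relates to $P_i$ is needed, because the claim concerns each iteration separately.
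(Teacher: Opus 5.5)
Your proposal is correct and follows the paper's proof. You fix $i$, apply Proposition~\ref{prop:GSPGSlPi} (the positive-semidefinite case, $l=1$) on $\mathcal X_i$ with $\pi_i$ and $P_i$ to obtain the singleton distance $\frac{P_i(x,x)-\pi_i(x)}{1-\pi_i(x)}$, and observe that step 1 minimises exactly this quantity; your explicit computation of $g(\{x\},P_i)$ and your checks that $|\mathcal X_i|\ge 2$ and that $P_i$ meets the hypotheses at every level fill in details the paper leaves implicit.
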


\begin{proof}
Fix $i$. For any singleton $S=\{x\}\subset\mathcal X_i$, apply
Proposition~\ref{prop:GSPGSlPi} on the state space $\mathcal X_i$ with
stationary distribution $\pi_i$ and kernel $P_i$ to obtain
$$
\|G_{\{x\}}P_iG_{\{x\}}-\Pi_{\mathcal X_i}\|_{F,\pi_i}
=\frac{P_i(x,x)-\pi_i(x)}{1-\pi_i(x)}.
$$
The algorithm chooses $x_i$ to minimise the left-hand side over $x\in\mathcal X_i$,
hence $S_i=\{x_i\}$ achieves the smallest Frobenius distance among all singleton splits
at iteration $i$.
\end{proof}

We emphasise that each of the Frobenius distance is taken with respect to a different state space, and accordingly, a different probability distribution. 

In fact, the results of Proposition \ref{prop:normA_lPA_l} can be generalised for each iteration of the algorithm described above.

\begin{corollary}
    For any $i \in \llbracket n-1\rrbracket$, define
    $$ A_l^{(i)} := \begin{pmatrix}
    1 & 0\\
    0 & (G_{S_{i+1}}P_{i+1}G_{S_{i+1}})^l
    \end{pmatrix}.$$
    Then $$G_{S_i} = \lim_{l \to \infty} A^{(i)}_l$$ and 
    $$\big\|A_l^{(i)} P_i A_l^{(i)} -\Pi_i\big\|_{F, \pi_i} \leq 2\|(G_{S_{i+1}}P_{i+1}G_{S_{i+1}})^l - \Pi_{i+1}\|_{F, \pi_{i+1}} + \|G_{S_i}P_iG_{S_i} - \Pi_i\|_{F, \pi_i}.$$
\end{corollary}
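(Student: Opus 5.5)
The plan is to reduce the corollary to the two-level case that has already been treated, namely Lemma \ref{lemma:A_l-G_S} and Proposition \ref{prop:normA_lPA_l}, with the relabelling $(\mathcal{X}_1,\pi_1,P_1,S_1,S_2,P_2)\mapsto(\mathcal{X}_i,\pi_i,P_i,S_i,S_{i+1},P_{i+1})$. Nothing in those arguments used $i=1$, and the recursive construction makes $P_i$ $\pi_i$-reversible, ergodic and positive-semidefinite at every level. So once the block structure is written down in the same way, both claims follow from the same computations.

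The first step is the limit statement. Order $\mathcal{X}_i$ so that $x_i$ is the first coordinate. Then $G_{S_i}$ has the block form $\mathrm{diag}(1,\Pi_{i+1})$: on $S_i=\{x_i\}$ it is the identity, and on $S_i'=\mathcal{X}_{i+1}$ every row equals the conditional law $\pi_i(\cdot)/\pi_i(\mathcal{X}_{i+1})=\pi_{i+1}$.

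Next I need $(G_{S_{i+1}}P_{i+1}G_{S_{i+1}})^l\to\Pi_{i+1}$. For this I apply Proposition \ref{prop:GSPGSlPi} on $\mathcal{X}_{i+1}$, which gives
\[
\|(G_{S_{i+1}}P_{i+1}G_{S_{i+1}})^l-\Pi_{i+1}\|_{F,\pi_{i+1}}=\lambda_2(\overline{P_{i+1}})^l .
\]
Positive-semidefiniteness gives $\lambda_2\ge 0$. Ergodicity gives $\lambda_2<1$, since $g(S_{i+1},P_{i+1})>0$ when the cut has positive flow. Hence the right-hand side tends to $0$, and convergence in a norm on a finite-dimensional space gives entrywise convergence.

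The main obstacle is the degenerate last level, $i=n-1$. There $\mathcal{X}_{n}$ is a singleton, so $S_{n}$ cannot be a nontrivial split. I would handle it by convention: $G_{S_n}=\Pi_n=1$, so the block is identically $\Pi_n$ and the limit and bound hold trivially. The same reasoning covers any level where $S_{i+1}=\mathcal{X}_{i+1}$.

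The second step is the inequality. I would first repeat Lemma \ref{lemma:A_l-G_S} verbatim. The only input it needs is the observation $\pi_i(x)/\pi_i(y)=\pi_{i+1}(x)/\pi_{i+1}(y)$ for $x,y\in\mathcal{X}_{i+1}$, and this yields
\[
\|A^{(i)}_l-G_{S_i}\|_{F,\pi_i}=\|(G_{S_{i+1}}P_{i+1}G_{S_{i+1}})^l-\Pi_{i+1}\|_{F,\pi_{i+1}}.
\]
Then I would use the triangle-inequality decomposition
\[
A^{(i)}_lP_iA^{(i)}_l-\Pi_i=A^{(i)}_lP_i(A^{(i)}_l-G_{S_i})+(A^{(i)}_l-G_{S_i})P_iG_{S_i}+(G_{S_i}P_iG_{S_i}-\Pi_i).
\]
The first two terms are bounded via von Neumann's trace inequality exactly as in Proposition \ref{prop:normA_lPA_l}. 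This requires $A^{(i)}_l$, $P_i$ and $G_{S_i}$ to be $\pi_i$-self-adjoint with operator norm at most $1$.

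For that I would check that $A^{(i)}_l$ is $\pi_i$-self-adjoint with norm at most $1$. It is a direct sum of the identity and a $\pi_{i+1}$-reversible, positive-semidefinite contraction. Reversibility with respect to $\pi_{i+1}$ is equivalent to reversibility with respect to $\pi_i$ restricted to $\mathcal{X}_{i+1}$, because the two differ only by a constant factor. Combining the three bounds gives the stated inequality.
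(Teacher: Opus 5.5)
Your proposal is correct and follows the paper's route. The paper's proof simply says the corollary follows from Lemma~\ref{lemma:A_l-G_S} and Proposition~\ref{prop:normA_lPA_l} with the index shifted, which is exactly your relabelling plus the triangle-inequality and von Neumann argument; your extra checks (the spectral argument for the limit via Proposition~\ref{prop:GSPGSlPi}, the convention at the degenerate level $i=n-1$, and that $A^{(i)}_l$ is a $\pi_i$-self-adjoint contraction) fill in details the paper leaves implicit.
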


The proof follows exactly as that of Lemma \ref{lemma:A_l-G_S} and Proposition \ref{prop:normA_lPA_l} with an appropriate change of index.

\section{Reducing the squared-Frobenius norm to $\mathcal{O}(k)$ via $GP$ and $GPG$ with $k$ orbits} \label{sec:frobeg}

In this section, suppose that the state space $\mathcal{X} = \llbracket n \rrbracket$ is partitioned into $k \in \mathbb{N}$ orbits such that
\begin{align*}
    \mathcal{X} = \bigsqcup_{i=1}^k \mathcal{O}_i,
\end{align*}
where $\mathcal{O}_i \neq \emptyset$ for $i \in \llbracket k \rrbracket$, and we consider the induced Gibbs kernel $G = G(\pi,\mathcal{O}_1,\ldots,\mathcal{O}_k)$.

The main result of this section gives that, for $P \in \mathcal{L}(\pi)$, $\|GP - \Pi \|_{F,\pi}^2$ and $\|GPG - \Pi \|_{F,\pi}^2$ are of the order $\mathcal{O}(k)$, while if in addition $P$ is lazy, then $\|P - \Pi\|_{F,\pi}^2 = \Omega(n)$. Thus, the interpretation is that orbit-averaging such as $GP$ or $GPG$ reduces the Frobenius norm to stationarity to an order at most $k$, even when the original $P$ can be ``far" from stationarity as measured in Frobenius norm (e.g. of the order at least $n$ for lazy $P$).

\begin{proposition}\label{prop:Froborder}
    Consider $\mathcal{X} = \llbracket n \rrbracket$, the partition into $k$ orbits given by $\mathcal{X} = \bigsqcup_{i=1}^k \mathcal{O}_i$ where $\mathcal{O}_i \neq \emptyset$ for $i \in \llbracket k \rrbracket$,   and the induced Gibbs kernel $G$ with respect to the distribution $\pi$ and these orbits. 
    For $P \in \mathcal{L}(\pi)$, then we have
    \begin{align*}
        \|GP - \Pi \|_{F,\pi}^2 &\leq k - 1 = \mathcal{O}(k), \\
        \|GPG - \Pi \|_{F,\pi}^2 &\leq k - 1 = \mathcal{O}(k).
    \end{align*}
    In particular, when $k = 2$, we have
    \begin{align*}
        \|GP - \Pi \|_{F,\pi}^2 &\leq 1 = \mathcal{O}(1), \\
        \|GPG - \Pi \|_{F,\pi}^2 &\leq 1 = \mathcal{O}(1).
    \end{align*}
    In contrast, if $P \in \mathcal{S}(\pi)$ and is lazy, we have
    \begin{align*}
        \|P - \Pi\|_{F,\pi}^2 \geq \dfrac{n}{4} - 1 = \Omega(n).
    \end{align*}
\end{proposition}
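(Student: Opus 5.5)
The plan is to reduce both upper bounds to traces of $k\times k$ projection chains, using Corollary \ref{cor:GPPi} and Corollary \ref{cor:norm GPG}, and to handle the lazy lower bound by a direct diagonal estimate.

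For the $GP$ bound, Corollary \ref{cor:GPPi} gives $\|GP-\Pi\|_{F,\pi}^2 = \Tr(\overline{P^2}) - 1$. Since $\overline{P^2}$ is a $k\times k$ stochastic matrix (it is the projection chain of the stochastic matrix $P^2$), each diagonal entry satisfies $\overline{P^2}(i,i)\le 1$. Hence $\Tr(\overline{P^2})\le k$, and the bound $k-1$ follows.

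For the $GPG$ bound, Corollary \ref{cor:norm GPG} with $l=1$ gives $\|GPG-\Pi\|_{F,\pi}^2 = \Tr(\overline{P}^2)-1$.
\begin{itemize}
\item First, $\overline{P}$ is $\overline{\pi}$-reversible: reversibility of $P$ gives the symmetry $\pi(\mathcal{O}_i)\overline{P}(i,j)=\pi(\mathcal{O}_j)\overline{P}(j,i)$.
\item Therefore $\overline{P}$ is self-adjoint in $\ell^2(\overline{\pi})$ with real eigenvalues in $[-1,1]$.
\item Then $\Tr(\overline{P}^2)=\sum_{i=1}^k \lambda_i(\overline{P})^2\le k$.
\end{itemize}
Alternatively, $\overline{P}^2$ is itself stochastic, so its diagonal entries are at most $1$. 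Either way the bound is $k-1$. The case $k=2$ is immediate. One could also use the exact formulas of Proposition \ref{prop:GP argmin} and Corollary \ref{cor:GPG frob}, which show that both quantities lie in $[0,1]$.

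For the lower bound, I would use $\Pi^*=\Pi$ and $P\Pi=\Pi P=\Pi$ (stationarity) together with $\Pi^2=\Pi$ to expand
\[
\|P-\Pi\|_{F,\pi}^2=\Tr(P^*P)-2\Tr(\Pi P)+\Tr(\Pi)=\Tr(P^*P)-1,
\]
where the middle term uses $\Tr(P^*\Pi)=\Tr(\Pi P)=\Tr(\Pi)=1$. Next I would write $\Tr(P^*P)=\sum_{x,z}\frac{\pi(z)}{\pi(x)}P(z,x)^2$, as in the positive-definiteness computation of Section \ref{sec:prelim}. Keeping only the diagonal terms $z=x$ gives $\Tr(P^*P)\ge\sum_x P(x,x)^2$. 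Laziness means $P(x,x)\ge 1/2$, so this sum is at least $n/4$, and hence $\|P-\Pi\|_{F,\pi}^2\ge n/4-1$.

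The only step needing care is the cross-term identity, specifically checking that $\Tr(P^*\Pi)=1$ holds under mere stationarity. This follows from $\Pi P=\Pi$, since $\pi P=\pi$ makes every row of $\Pi P$ equal to $\pi$. I will also make sure adjoints and traces are taken with respect to $\pi$ consistently.
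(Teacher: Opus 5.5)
Your proposal is correct and follows essentially the same route as the paper: reduce both upper bounds via Corollary \ref{cor:GPPi} and Corollary \ref{cor:norm GPG} to traces of $k\times k$ stochastic matrices, which are at most $k$, and for the lazy lower bound keep only the diagonal terms of $\Tr(P^*P)$. Your explicit check that $\|P-\Pi\|_{F,\pi}^2=\Tr(P^*P)-1$ needs only stationarity fills in a step the paper asserts without comment.
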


\begin{proof}
    First, we note that for a transition matrix $Q \in \mathcal{L}(\mathcal{X})$, its trace is always bounded above by $n$, the size of the state space. That is, we have
    \begin{align}\label{eq:trQ}
        \Tr(Q) = \sum_{i=1}^n Q(i,i) \leq n = |\mathcal{X}|.
    \end{align}
    Using Corollary \ref{cor:GPPi}, \eqref{eq:trQ} and noting that $\overline{P^2} \in \mathcal{L}(\llbracket k \rrbracket)$ is of size $k \times k$, we thus have
    \begin{align*}
        \|GP- \Pi\|^2_{F,\pi} = \Tr(\overline{P^2}) - 1 \leq k - 1.
    \end{align*}
    Similarly, using Corollary \ref{cor:norm GPG}, \eqref{eq:trQ} and noting that $(\overline{P})^2 \in \mathcal{L}(\llbracket k \rrbracket)$ is of size $k \times k$, we have
    \begin{align*}
        \|GPG- \Pi\|^2_{F,\pi} = \Tr((\overline{P})^2) - 1 \leq k - 1.
    \end{align*}
    Finally, we calculate the squared-Frobenius norm to stationarity of $P$:
    \begin{align*}
        \|P - \Pi\|_{F,\pi}^2 &= \Tr(P^*P) - 1 \\
                              &\geq \sum_{i=1}^n (P(i,i))^2 - 1 \\
                              &\geq \dfrac{n}{4} - 1.
    \end{align*}
\end{proof}

\subsection{Example: improving the lazy simple random walk on the $d$-dimensional hypercube via Cheeger's cut and $G_S P G_S$}

In this subsection, we give an example in which $G_S P G_S$ still significantly improves mixing over the original $P$, when the set $S$ is taken to be a Cheeger's cut of $P$. Intuitively, such choice of $S$ may not perform well in $G_S P G_S$ since $P$ does not hop between $S$ and $S'$ effectively. Thus, the moral of this example is that even in such ``worst-case" choice of $S$, the use of $G_S P G_S$ still improves upon $P$.

Specifically, we consider $\mathcal{X} = \{-1,+1\}^d$, the $d$-dimensional hypercube for positive integer $d \geq 3$, and we take $P$ to be the lazy simple random walk on $\mathcal{X}$, that is, with probability $1/2$ the walk stays at the same state while with probability $1/2$, one out of the $d$ coordinates are chosen uniformly-at-random and is flipped to the opposite spin. Clearly, $\pi$ in this setting is the discrete uniform distribution on the hypercube.

In such setup, the eigenvalues of $P$ are shown to be (see e.g. \cite[Lemma $5.2.17$]{roch_mdp_2024}), for $i \in \llbracket 0,d \rrbracket$,
\begin{align*}
    \dfrac{d-i}{d}, \quad \mathrm{with \, multiplicity } \binom{d}{i}.
\end{align*}
Thus, by the binomial theorem of matrices we see that, for $l \in \mathbb{N}$,
\begin{align*}
    \|P^l - \Pi \|^2_{F,\pi} &= \|(P - \Pi)^l\|^2_{F,\pi} \\
                             &= \Tr((P - \Pi)^{2l}) \\
                             &\geq \dfrac{d(d-1)}{2} \left( 1 - \dfrac{2}{d} \right)^{2l}.
\end{align*}
Now, using the inequality $\ln(1-x) \geq - \dfrac{x}{1-x}$ for $x \in (0,1)$, we have
\begin{align*}
    \left( 1 - \dfrac{2}{d} \right)^{2l} \geq \exp\left(-\dfrac{4l}{d-2}\right),
\end{align*}
and hence
\begin{align*}
    \|P^l - \Pi \|^2_{F,\pi} \geq \dfrac{d(d-1)}{2} \exp\left(-\dfrac{4l}{d-2}\right).
\end{align*}
In particular, this yields
\begin{align}\label{eq:hypercubeP}
    l = d&:\quad \|P^d - \Pi \|^2_{F,\pi} = \Omega(d^2), \\
    l = \dfrac{1}{2}d \ln d&:\quad \|P^{(1/2)d\ln d} - \Pi \|^2_{F,\pi} = \Omega(1). \label{eq:hypercubeP2} 
\end{align}
On the other hand, we now choose $S = \{x \in \{-1,+1\}^d;~x(1) = -1\}$, the set of configurations such that the first coordinate has a spin of $-1$. It can be shown that $S$ is a Cheeger's cut of $P$, see e.g. \cite[Section $5.3.2$]{roch_mdp_2024}. Clearly, $\pi(S) = \pi(S') = 1/2$, and by Proposition \ref{prop:GSPGSlPi} yields
\begin{align*}
    \|(G_S P G_S)^l - \Pi \|^2_{F,\pi} &= (1-g(S,P))^{2l} \\
                                       &= (1-2\Phi^*(P))^{2l} \\
                                       &\leq (1 - \gamma(P))^{2l} = \left(1 - \dfrac{1}{d}\right)^{2l},
\end{align*}
where the inequality follows from the Cheeger's inequality \eqref{eq:Cheegerineq}. In particular, we have
\begin{align}\label{eq:hypercubeGPG}
    l &= d:\quad \|(G_S P G_S)^d - \Pi \|^2_{F,\pi} = \mathcal{O}(1), \\
    l &= \dfrac{1}{2}d \ln d:\quad \|(G_S P G_S)^{(1/2)d\ln d} - \Pi \|^2_{F,\pi} = \mathcal{O}\left(\frac{1}{d}\right). \label{eq:hypercubeGPG2}
\end{align}

We are interested in the time $l = \frac{1}{2}d \ln d$ since the Markov chain $P$ exhibits total-variation cutoff at a cutoff time $\frac{1}{2}d \ln d$, see e.g. \cite[Remark $5.2.19$]{roch_mdp_2024}. Contrasting \eqref{eq:hypercubeP}, \eqref{eq:hypercubeP2} with \eqref{eq:hypercubeGPG}, \eqref{eq:hypercubeGPG2}, we thus see that such $G_S P G_S$ converges faster than that of $P$ as measured by the squared-Frobenius norm to stationarity up to time $l = \frac{1}{2} d \ln d$.

\section{Submodular optimisation problems in group-averaged Markov chains} \label{sec:opt}

Recall that we write $G_S$ to be the Gibbs kernel associated with the orbits $\mathcal{X} = S \sqcup S'$, where $S \subseteq \mathcal{X}$ satisfies $0 < \pi(S) < 1.$

In this section, we focus on recognizing submodular structures in natural combinatorial optimisation problems under the KL divergence or the Frobenius norm in the context of group-averaged Markov chains. Taking advantages on these structures, we design optimisation algorithms for the tuning of $G$.

\subsection{Submodular optimisation and minimisation of KL divergence}

In this subsection, we investigate the link between submodular optimisation and entropy with respect to the KL divergence. In particular, we shall prove that the KL divergence from $\Pi$ to $PG_S$ or $G_S P$ can be decomposed as a difference of two submodular functions. This decomposition is constructive, which then allows us to employ existing algorithms for minimization of difference of submodular functions to find optimal or near-optimal $S$, thus offering a method for tuning of $G_S$.

\begin{proposition}[$D_{\mathrm{KL}}^\pi(PG_S \| \Pi)$ can be decomposed as a difference of two submodular functions]\label{prop:DPGdiffsub}
    Let $P$ be a $\pi$-reversible Markov kernel on a finite state space $\mathcal X$. 
    For any partition $S,S'$ such that $\mathcal{X} = S \sqcup S'$ where $S \neq \emptyset,\mathcal{X}$, the KL divergence
    $$D_{\mathrm{KL}}^\pi(PG_S \| \Pi)$$
    can be written as a difference of two submodular set functions. Equivalently, it is also a difference of two supermodular functions. 

    That is, for $\phi : \mathbb R^+ \to \mathbb R,\ \phi(t) = t \log t$,
    $$D_{\mathrm{KL}}^\pi(PG_S \,\|\, \Pi) = T(S) - U(S)$$ 
    where $T, U$ are two supermodular functions given by
    \begin{align*}
        T(S) &= T(S,P,\pi) := \sum_{x \in \mathcal X} \pi(x)\big[\phi(P(x,S)) + \phi(P(x,S'))\big],\\
        U(S) &= U(S,\pi) := \phi(\pi(S)) + \phi(\pi(S')) = -H(\overline{\pi}).
    \end{align*}
\end{proposition}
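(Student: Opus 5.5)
We first compute the divergence explicitly and then establish the two supermodularity claims separately.

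\textbf{Step 1: the identity.} For $y \in \mathcal{O}(y)$ we have $PG_S(x,y) = P(x,\mathcal{O}(y))\,\pi(y)/\pi(\mathcal{O}(y))$. Substituting this into the definition of $D_{\mathrm{KL}}^\pi$ gives
\begin{align*}
D_{\mathrm{KL}}^\pi(PG_S\|\Pi) &= \sum_x \pi(x)\sum_{B\in\{S,S'\}} P(x,B)\log\frac{P(x,B)}{\pi(B)}\\
&= \sum_x \pi(x)\big[\phi(P(x,S))+\phi(P(x,S'))\big] - \sum_{B}\Big(\sum_x \pi(x)P(x,B)\Big)\log \pi(B).
\end{align*}
Stationarity gives $\sum_x\pi(x)P(x,B)=\pi(B)$, so the second sum equals $\phi(\pi(S))+\phi(\pi(S'))=-H(\overline\pi)$. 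This yields $T-U$, with the convention $0\log 0=0$ applied where $P(x,B)=0$. The identity itself needs only $\pi P=\pi$; reversibility is not used.

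\textbf{Step 2: supermodularity.} For fixed $x$, the map $S\mapsto P(x,S)$ is a nonnegative modular set function, and likewise $S\mapsto P(x,S')=1-P(x,S)$. We use the standard fact that if $m$ is modular and $\psi$ is convex, then $\psi\circ m$ is supermodular. To check it, take $A,B$ and set $a=m(A\cap B)$, $b=m(A)$, $c=m(B)$, $d=m(A\cup B)$. Modularity gives $a+d=b+c$. If $m$ is monotone, as with nonnegative weights, then $a\le b,c\le d$, so $(a,d)$ majorises $(b,c)$. Convexity then gives $\psi(a)+\psi(d)\ge\psi(b)+\psi(c)$.

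For $S\mapsto P(x,S')$ the map is monotone decreasing. Here $a$ and $d$ swap roles: $a\ge b,c\ge d$, and the same inequality still holds. Since $\phi$ is convex on $[0,\infty)$, each summand of $T$ is supermodular. A nonnegative combination $\sum_x\pi(x)(\cdot)$ preserves supermodularity, so $T$ is supermodular. The same argument with $m(S)=\pi(S)$ and $m(S)=\pi(S')$ shows $U$ is supermodular.

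\textbf{Step 3: the submodular form.} Writing $T-U=(-U)-(-T)$ expresses the divergence as a difference of two submodular functions. This gives both formulations in the statement.

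\textbf{Main obstacle.} The only delicate point is the majorisation argument in Step 2, since the orientation differs between the increasing map $S\mapsto P(x,S)$ and the decreasing map $S\mapsto P(x,S')$. The boundary convention at $t=0$ also needs care: $\phi$ is continuous and convex on $[0,\infty)$, so it causes no difficulty. The trivial sets $S=\emptyset,\mathcal X$ are harmless, because the functions are defined there and supermodularity is checked on all of $2^{\mathcal X}$.
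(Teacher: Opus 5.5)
Your proof is correct and follows essentially the same route as the paper: the same computation of $PG_S$ gives the identity $T-U$ (where, as you note, only stationarity $\pi P=\pi$ is used), and then a convex function of a monotone modular set function is supermodular. The only difference is that you prove this composition fact directly by a two-point majorisation argument instead of citing Bach's result, which also has the small benefit of explicitly covering the decreasing map $S\mapsto P(x,S')=1-P(x,S)$; the citation, stated for nonnegative weights, glosses over that case.
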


\begin{proof}
    For $S \subset \mathcal X$, define
    $$P(x,S) := \sum_{y \in S} P(x,y), \qquad \pi(S) := \sum_{y \in S} \pi(y).$$
    Then
    $$(PG_S)(x,y) = 
        \begin{cases}
            \dfrac{\pi(y)}{\pi(S)} P(x,S), & y \in S, \\[6pt]
            \dfrac{\pi(y)}{\pi(S')} P(x,S'), & y \in S' .
        \end{cases}$$

    The KL divergence can then be expressed as
    \begin{align*}
        D_{\mathrm{KL}}^\pi(PG_S \,\|\, \Pi) &= \sum_{x \in \mathcal X} \pi(x) \Bigg[P(x,S) \log \frac{P(x,S)}{\pi(S)} + P(x,S') \log \frac{P(x,S')}{\pi(S')}\Bigg] \\
        &= T(S) - U(S).
    \end{align*}

    For each fixed $x$, the set functions $S \mapsto P(x,S)$ and $S \mapsto P(x,S')$ are non-negative and modular. By Proposition 6.1 of \cite{bach2013learningsubmodularfunctionsconvex}, since $\phi$ is convex, the compositions
    $$S \mapsto \phi(P(x,S)),
        \qquad
        S \mapsto \phi(P(x,S'))$$
    are supermodular. Since supermodularity is preserved under non-negative linear combinations, the function
    $$T(S) = \sum_{x \in \mathcal X} \pi(x)\big[\phi(P(x,S)) + \phi(P(x,S'))\big]$$
    is therefore supermodular.

    The remaining terms are
    $$U(S) = \phi(\pi(S)) + \phi(\pi(S')),$$
    which is also supermodular since $\pi(S)$ and $\pi(S')$ are modular.

    Consequently, $D_{\mathrm{KL}}^\pi(PG_S \,\|\, \Pi) = T(S) - U(S)$ can be recognized as a difference of two supermodular functions, and hence equivalently a difference of two submodular functions.
\end{proof}

\begin{corollary}
    Under the same settings as Proposition \ref{prop:DPGdiffsub}, it follows from the bisection property $D_{\mathrm{KL}}^\pi(P \| Q) = D_{\mathrm{KL}}^\pi(P^* \| Q^*)$ (e.g. shown in \cite{wolfer_2023} Section 4.2.2) that for $\pi$-reversible $P$, one can equivalently write
    $$D_{\mathrm{KL}}^\pi(G_SP \,\|\, \Pi) =  D_{\mathrm{KL}}^\pi(PG_S \,\|\, \Pi) = T(S) - U(S).$$
\end{corollary}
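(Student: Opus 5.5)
The plan is to reduce the corollary to Proposition \ref{prop:DPGdiffsub} through the bisection property. The only real work is identifying the adjoint of $PG_S$ and of $\Pi$ in $\ell^2(\pi)$. I will check that both $PG_S$ and $G_SP$ lie in $\mathcal{S}(\pi)$, so that the bisection property of \cite{wolfer_2023} applies. This follows from $\pi P = \pi$ and $\pi G_S = \pi$: the latter holds because $G_S \in \mathcal{L}(\pi)$.

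Next I compute adjoints. The $\ell^2(\pi)$-adjoint reverses products, so $(PG_S)^* = G_S^* P^*$. Since $P$ is $\pi$-reversible, $P^* = P$. Since the Gibbs kernel is $\pi$-reversible by construction, $G_S^* = G_S$; this can be verified directly, because $\pi(x)G_S(x,y) = \pi(x)\pi(y)/\pi(\mathcal{O}(x))$ is symmetric in $x,y$ when $y \in \mathcal{O}(x)$ and vanishes otherwise. Hence $(PG_S)^* = G_SP$.

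For $\Pi$, note that $\Pi^*(x,y) = \pi(y)\Pi(y,x)/\pi(x) = \pi(y)$, so $\Pi^* = \Pi$.

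Applying $D_{\mathrm{KL}}^\pi(Q\|R) = D_{\mathrm{KL}}^\pi(Q^*\|R^*)$ with $Q = PG_S$ and $R = \Pi$ then gives
\[
D_{\mathrm{KL}}^\pi(G_SP\|\Pi) = D_{\mathrm{KL}}^\pi(PG_S\|\Pi).
\]
Proposition \ref{prop:DPGdiffsub} identifies the right-hand side with $T(S)-U(S)$.

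There is no genuine obstacle in this argument. The only point requiring care is confirming that the bisection identity is valid under the paper's convention $0\log(0/a)=0$, where zero entries of the kernels could cause trouble. To settle this, I would observe that
\[
\pi(x)(PG_S)(x,y) = \pi(y)(G_SP)(y,x),
\]
and $\Pi$ is symmetric in the same weighted sense. Reindexing the double sum $x\leftrightarrow y$ therefore transforms one divergence termwise into the other, which also gives a self-contained direct proof of the equality.
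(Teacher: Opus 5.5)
Your proposal is correct and follows the paper's route. Because $P$ and $G_S$ are $\pi$-reversible, $(PG_S)^* = G_SP$ and $\Pi^* = \Pi$, so the bisection property together with Proposition~\ref{prop:DPGdiffsub} gives the claim; your closing reindexing via $\pi(x)(PG_S)(x,y) = \pi(y)(G_SP)(y,x)$ is a valid self-contained check of the same identity that also settles the zero-entry convention.
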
 

In fact, if we consider $P = I$, the decomposition obtained in Proposition \ref{prop:DPGdiffsub} becomes
$$D_{\mathrm{KL}}^\pi(G_S \,\|\, \Pi) = - U(S) = H(\overline{\pi}).$$
In this special case, it is shown in Proposition 7.1 of \cite{GPG} that the optimal cut is given by $S = \{x_{\min}\}$, where $x_{\min}$ is any element with the smallest $\pi$ mass. That is, it is any $x \in \mathcal{X}$ satisfying $\pi(x) \leq \pi(y)$ for all $y \in \mathcal{X}$.

We now show that optimisation of the KL divergence can be seen equivalently as a maximum entropy problem: 

\begin{proposition}[Entropy rate of $P G_S$]\label{prop:entropyPG_S}
    Let $P\in \mathcal{L}(\pi)$ and consider any partition $S,S'$ such that $\mathcal{X} = S \sqcup S'$ where $S \neq \emptyset,\mathcal{X}$. Then we have
    \begin{align*}
        H_\pi(P G_S) = H(\pi) - D_{\mathrm{KL}}^\pi(PG_S \| \Pi) = H(\pi) + U(S) - T(S),
    \end{align*}
    which is a difference of two supermodular (or equivalently submodular) function. In particular, we have
    \begin{equation} \label{eq:argmax_entropy}
        \argmin_S D_{\mathrm{KL}}^\pi(PG_S \| \Pi) = \argmax_S H_\pi(PG_S),
    \end{equation}
    and minimisation of the difference of two submodular functions Proposition \ref{prop:DPGdiffsub} is equivalent to maximising the entropy rate of $PG_S$ with respect to $S$.
\end{proposition}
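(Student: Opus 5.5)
The plan is to verify the general identity $H_\pi(Q) = H(\pi) - D_{\mathrm{KL}}^\pi(Q\|\Pi)$ for any $\pi$-stationary kernel $Q$. We then specialise to $Q = PG_S$ and invoke Proposition \ref{prop:DPGdiffsub}.

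For the identity, expand the logarithm in the definition of $D_{\mathrm{KL}}^\pi(Q\|\Pi)$, recalling that $\Pi(x,y)=\pi(y)$:
\begin{align*}
D_{\mathrm{KL}}^\pi(Q\|\Pi) &= \sum_{x,y}\pi(x)Q(x,y)\log Q(x,y) - \sum_{x,y}\pi(x)Q(x,y)\log\pi(y)\\
&= -H_\pi(Q) - \sum_{y}(\pi Q)(y)\log \pi(y).
\end{align*}
Stationarity $\pi Q=\pi$ turns the last sum into $-H(\pi)$, which gives the claimed identity.

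To apply this to $Q = PG_S$, we need $PG_S\in\mathcal{S}(\pi)$. This holds because $\pi P=\pi$ and $\pi G_S=\pi$, the latter following from the $\pi$-reversibility of $G_S$. The convention $0\log 0=0$ should be applied consistently in both $H_\pi$ and $D_{\mathrm{KL}}^\pi$, so that zero entries of $PG_S$ cause no trouble. Substituting the decomposition of Proposition \ref{prop:DPGdiffsub} then gives
\[
H_\pi(PG_S)=H(\pi)+U(S)-T(S).
\]

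It remains to check the difference-of-supermodular structure. The term $H(\pi)$ is a constant in $S$, hence modular, so $H(\pi)+U(S)$ is supermodular. The term $T$ is supermodular by Proposition \ref{prop:DPGdiffsub}. Therefore $H_\pi(PG_S)$ is a difference of supermodular functions, and, negating both, equivalently a difference of submodular ones.

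The argmin/argmax identity \eqref{eq:argmax_entropy} is then immediate, because $H(\pi)$ does not depend on $S$. The only step requiring any care is the use of stationarity to collapse the cross term. No step here is a genuine obstacle.
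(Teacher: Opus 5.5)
Your proposal is correct and follows essentially the paper's route: expand the logarithm in $D_{\mathrm{KL}}^\pi(PG_S\|\Pi)$, use $\pi$-stationarity of $PG_S$ to identify the cross term with $H(\pi)$, and substitute the decomposition $T-U$ from Proposition \ref{prop:DPGdiffsub}. Your explicit check that $\pi G_S=\pi$ and your verification of the difference-of-supermodular structure fill in details the paper's one-line proof leaves implicit.
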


\begin{proof}
    Equation \eqref{eq:argmax_entropy} follows directly from 
    \begin{align*}
        D_{\mathrm{KL}}^\pi(PG_S \| \Pi) &= \sum_{x,y \in \mathcal{X}} \pi(x) PG_S(x,y)(\log PG_S(x,y) - \log \pi(y))\\
        &= H(\pi) - H_\pi(PG_S).
    \end{align*}
\end{proof}

It is well-known that the Shannon entropy is submodular when viewed as a set function on collections of random variables. For a finite family $(X_i)_{i \in \llbracket n \rrbracket}$, the map $S \mapsto H(X_S)$ is submodular (see, e.g., Section 1.1 of \cite{Krause_Golovin_2014}). 

In Proposition \ref{prop:entropyPG_S}, we instead show that the entropy-related objective associated with the samplers $G_SP$ and $PG_S$ admits a difference-of-submodular (DS) decomposition with respect to the partition variable $S$. While every set function can be written as a difference of submodular functions (\cite{Bilmes_subsup}, Lemma 4), such representations are typically non-constructive. Here, we derive an explicit decomposition that can be exploited algorithmically, connecting the optimisation of $G_SP$ and $PG_S$ to DS optimisation schemes, including the submodular–modular or modular-modular procedures of \cite{Bilmes_subsup}, and more recent difference-of-convex (DC) programming approaches for DS minimisation in \cite{halabi_dc}.

\subsection{An approximate minimisation algorithm for the KL divergence of $PG_S$ from $\Pi$}\label{subsec:MMKL}

In general, a set function $\nu: 2^\mathcal{X} \to \mathbb{R}$ can always be written as a difference of two submodular functions $\nu = u - v$, $u, v: 2^\mathcal{X} \to \mathbb{R}$ (\cite{Bilmes_subsup}, Lemma 4). A standard approach following \cite{iyer2013algorithmsapproximateminimizationdifference} is then to construct modular surrogate functions that locally approximate $u$ and/or $v$. 

Concretely, at a current iterate $A_t \subseteq V$, one selects a modular upper bound $M_t$ of $u$ and a modular lower bound $m_t$ of $v$, both tight at $A_t.$ Minimising the difference $M_t - m_t$ then yields a monotone decrease in the original objective. This produces a descent algorithm towards a local minimum, and is the key idea behind the submodular-supermodular and modular-modular procedures. 

Viewed more broadly, this procedure mirrors the structure of a majorisation-minimisation (MM) algorithm. The procedures above can be seen as a discrete analogue of MM schemes, with modular functions playing the role of majorisers/minorisers. A review of the MM algorithm can be found in \cite{Hunter01022004}.

While Section 2 of \cite{iyer2013algorithmsapproximateminimizationdifference} provides specific constructions of the functions $m_t$ and $M_t$, in the context of our problem, we give a simple explicit choice of $m_t$.

\begin{proposition} \label{prop:U_lower}
    For any fixed $S_t \subseteq \mathcal{X}$ with $0 < \pi(S_t) < 1$, a tight modular lower bound $m_t$ for $U$ is given by
    $$m_t(S) := U(S_t) + \left(\log \frac{\pi(S_t)}{1-\pi(S_t)}\right) \left(\pi(S) - \pi(S_t)\right).$$
    That is, $m_t(S) \leq U(S)$ with equality when $S = S_t$.
\end{proposition}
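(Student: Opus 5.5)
The plan is to reduce the claim to a one-variable convexity statement. Write $p := \pi(S) \in [0,1]$ and $p_t := \pi(S_t) \in (0,1)$. Then $U(S) = \psi(p)$ with
$$\psi(p) := \phi(p) + \phi(1-p) = p\log p + (1-p)\log(1-p),$$
using the convention $\phi(0) = 0$ at the endpoints. Since $p \mapsto \pi(S)$ is modular, $m_t$ is modular: it is a constant plus a constant multiple of the modular function $S \mapsto \pi(S)$. Hence the only content is the inequality $m_t(S) \le U(S)$ together with tightness at $S_t$.

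Tightness is immediate. At $S = S_t$ the linear term vanishes, so $m_t(S_t) = U(S_t)$.

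For the inequality, observe that $\psi$ is convex on $[0,1]$, since $\psi''(p) = 1/p + 1/(1-p) > 0$ on $(0,1)$ and $\psi$ is continuous at the endpoints. Its derivative at the interior point $p_t$ is
$$\psi'(p_t) = \log p_t + 1 - \log(1-p_t) - 1 = \log\frac{p_t}{1-p_t},$$
which is exactly the coefficient appearing in $m_t$. The tangent-line inequality for convex functions then gives $\psi(p) \ge \psi(p_t) + \psi'(p_t)(p - p_t)$ for all $p \in [0,1]$. Substituting $p = \pi(S)$ yields $U(S) \ge m_t(S)$ for every $S \subseteq \mathcal{X}$.

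The only point needing care is the boundary cases $S = \emptyset$ and $S = \mathcal{X}$, where $p \in \{0,1\}$. By the convention $0\log 0 = 0$ used for the KL divergence, $\psi$ is continuous on $[0,1]$. The tangent inequality holds on the open interval and extends to the endpoints by continuity, which settles these cases. The argument requires $p_t \in (0,1)$ so that the derivative is finite, and this is assumed.
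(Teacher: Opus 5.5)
Your proposal is correct and follows essentially the same route as the paper. Both write $U(S)=\Psi(\pi(S))$ with $\Psi(x)=x\log x+(1-x)\log(1-x)$ convex, use $\Psi'(\pi(S_t))=\log\frac{\pi(S_t)}{1-\pi(S_t)}$ in the tangent-line inequality, and get modularity because $m_t$ depends on $S$ only through $\pi(S)$. Your continuity argument for $S=\emptyset$ and $S=\mathcal{X}$ is a small addition the paper omits, since it defines $\Psi$ only on $(0,1)$.
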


\begin{proof}
    Recall that 
    $$U(S) = \pi(S)\log \pi(S) + (1-\pi(S))\log (1-\pi(S)) = \Psi(\pi(S)),$$
    where $\Psi: (0,1) \to \mathbb{R},\ \Psi(x) = x\log x + (1-x)\log (1-x).$ Note that $\Psi$ is convex on $(0,1)$, and $\Psi'(x) = \log(x/(1-x))$ satisfies
    $$\Psi(x) \geq \Psi(y) + \Psi'(y)(x-y)$$
    for any $y \in (0,1).$
    Setting $y = \pi(S_t)$ and $x = \pi(S)$ gives us $m_t(S) \leq U(S)$ as desired. Note that $m_t(S)$ is modular in $S$ since it depends on $S$ only through $\pi(S)$.
\end{proof}

We note that this approximation is possible due to the structure of the convex function $\Psi$. In particular, convex functions admit global affine lower bounds given by their tangents, which are sharp at the point of linearisation. This property enables the construction of a tight modular lower bound for $U(S)$ via a first-order approximation in $\pi(S)$.

By contrast, convexity alone cannot be exploited in a similar manner for $T(S)$. A non-affine convex function does not admit a global affine upper bound that is tight at a given point, and hence no sharp modular majoriser of $T(S)$ can be obtained through convexity arguments alone. Constructing a valid global upper bound for $T(S)$ therefore requires additional structure, which in our case is provided by the submodularity of $-T$. The following construction follows from Proposition 5 of \cite{Bilmes_subsup}.

\begin{proposition} \label{prop:T_upper}
    Consider any $P \in \mathcal{L}(\mathcal{X})$ and let 
    $$W_k := \{\Phi(1),\dots,\Phi(k)\}, \qquad k=1,\dots,|\mathcal X|.$$ Choose $\Phi$ to be any permutation of $\mathcal{X}$ such that 
    $$S_t = \{\Phi(1),\dots,\Phi(|S_t|)\}.$$
    Define $M_t : 2^{\mathcal X} \to \mathbb{R}$ by 
    $$M_t(S) :=\sum_{y\in S} M_t(y), \qquad M_t(\Phi(k)) := \begin{cases}
        T(W_1), & \mathrm{if}\ k = 1,\\
        T(W_k) - T(W_{k-1}), & \mathrm{otherwise}.
    \end{cases}$$
    Then $M_t$ is a modular upper bound for $T$, that is,
    $$T(S,P, \pi) = T(S)  \leq M_t(S) \quad \text{for all } S \subseteq \mathcal X,$$
    with equality when $S=S_t$. 
\end{proposition}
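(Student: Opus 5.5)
The plan is to run the standard ``greedy / chain'' argument for supermodular functions. It rests on two facts about $T$. The first is that $T$ is supermodular. The second is that $T$ is normalised, i.e.\ $T(\emptyset)=0$.

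\textbf{Supermodularity of $T$.} This follows from Proposition~\ref{prop:DPGdiffsub}. Its proof only uses that $S\mapsto P(x,S)$ and $S\mapsto P(x,S')$ are non-negative modular functions and that $\phi(t)=t\log t$ is convex. Neither fact needs reversibility, so the conclusion holds for any $P\in\mathcal{L}(\mathcal{X})$.

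\textbf{Normalisation of $T$.} With the convention $\phi(0)=0\log 0=0$,
$$T(\emptyset)=\sum_x \pi(x)\big[\phi(0)+\phi(P(x,\mathcal X))\big]=\sum_x\pi(x)\,\phi(1)=0.$$
Hence the case $k=1$ in the definition of $M_t$ reads $M_t(\Phi(1))=T(W_1)-T(W_0)$ with $W_0:=\emptyset$. So all increments have the uniform form $M_t(\Phi(k))=T(W_k)-T(W_{k-1})$.

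\textbf{Supermodularity as increasing marginal gains.} Supermodularity is equivalent to the following: for $A\subseteq B$ and $y\notin B$,
$$T(A\cup\{y\})-T(A)\le T(B\cup\{y\})-T(B).$$
This follows by applying the defining inequality to the sets $A\cup\{y\}$ and $B$.

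\textbf{Upper bound.} Fix an arbitrary $S$ and list its elements in $\Phi$-order as $\Phi(j_1),\dots,\Phi(j_m)$ with $j_1<\dots<j_m$. Set $S_i:=\{\Phi(j_1),\dots,\Phi(j_i)\}$ and $S_0=\emptyset$. Telescoping gives
$$T(S)=T(\emptyset)+\sum_{i=1}^m\big[T(S_{i-1}\cup\{\Phi(j_i)\})-T(S_{i-1})\big].$$
Since $S_{i-1}\subseteq W_{j_i-1}$ and $\Phi(j_i)\notin W_{j_i-1}$, the increasing-marginal-gain property bounds each summand by
$$T(W_{j_i})-T(W_{j_i-1})=M_t(\Phi(j_i)).$$
Summing and using $T(\emptyset)=0$ gives $T(S)\le \sum_{y\in S}M_t(y)=M_t(S)$.

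\textbf{Tightness at $S_t$.} By the choice of $\Phi$, $S_t=W_{|S_t|}$ is a prefix of the chain. Then $M_t(S_t)=\sum_{k=1}^{|S_t|}[T(W_k)-T(W_{k-1})]$ telescopes exactly to $T(W_{|S_t|})-T(\emptyset)=T(S_t)$.

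The only delicate point is the normalisation $T(\emptyset)=0$. It relies on the convention $\phi(0)=0$ and on the row sums of $P$ being $1$. Without it, the modular function would be off by a constant and the stated definition $M_t(\Phi(1))=T(W_1)$ would not give an upper bound. Everything else is the routine telescoping comparison along the chain.
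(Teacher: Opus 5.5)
Your proof is correct and takes the same route as the paper. The paper simply invokes Proposition 5 of \cite{Bilmes_subsup}, the chain/greedy modular bound applied to the normalised submodular function $-T$, and your telescoping comparison along $\emptyset=W_0\subseteq W_1\subseteq\cdots$ is exactly the proof of that cited result; you also make explicit two points the paper leaves implicit, namely that supermodularity of $T$ does not use reversibility (needed because the statement allows any $P\in\mathcal{L}(\mathcal{X})$) and that $T(\emptyset)=0$, which is what makes the $k=1$ case $M_t(\Phi(1))=T(W_1)$ consistent.
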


Now, one can use either or both of the approximations listed in Propositions \ref{prop:U_lower} and \ref{prop:T_upper} as surrogates to the objective function $T - U.$ Replacing only one of the two terms by its modular approximation yields submodular optimisation problem, for which convex analytic methods is available via the Lovász extension. While such problems admit polynomial-time algorithms, exact optimisation is often computationally expensive and scales poorly in large
state spaces, motivating the use of greedy-type approximation algorithms in practice.

In contrast, replacing both $T$ and $U$ by their respective modular bounds results in a fully modular surrogate objective. Optimisation in this case is simpler and can be carried out exactly and efficiently, at the expense of a coarser local approximation of the original objective. 

\subsection{A coordinate descent algorithm for minimising $D^\pi_{\mathrm{KL}}(G_V P G_S \| \Pi)$ via difference-of-submodular decomposition} \label{sub:doubleMM}

Let $V,S \subseteq \mathcal{X}$. By applying the Pythagorean identity twice (see Corollary 6.1 of \cite{choi2025groupaveragedmarkovchainsmixing}), we see that
\begin{align*}
    D^\pi_{\mathrm{KL}}(P \| \Pi) &= D^\pi_{\mathrm{KL}}(P \| P G_S) + D^\pi_{\mathrm{KL}}(P G_S \| \Pi) \\
                                  &= D^\pi_{\mathrm{KL}}(P \| P G_S) + D^\pi_{\mathrm{KL}}(P G_S \| G_V P G_S) + D^\pi_{\mathrm{KL}}(G_V P G_S \| \Pi).
\end{align*}

We therefore have
\begin{align}\label{eq:argminargmaxGVPGS}
    \argmin_{V,S \subseteq \mathcal{X};~V,S \neq \emptyset, \mathcal{X}} D^\pi_{\mathrm{KL}}(G_V P G_S \| \Pi) = \argmax_{V,S \subseteq \mathcal{X};~V,S \neq \emptyset, \mathcal{X}} D^\pi_{\mathrm{KL}}(P \| P G_S) + D^\pi_{\mathrm{KL}}(P G_S \| G_V P G_S).
\end{align}
Thus, in the remainder of this subsection we focus on the maximisation problem on the right hand side of \eqref{eq:argminargmaxGVPGS}. Note that under the setting of $V \neq S, S'$, in general $G_V P G_S$ is non-reversible, even if $P$ is reversible.

We first work with $D^\pi_{\mathrm{KL}}(P \| P G_S)$ and decompose it as a difference of two submodular functions:
\begin{lemma}\label{lem:decomposePPGS}
    Let $P \in \mathcal{L}(\pi)$ and $S \subseteq \mathcal{X}$. We have
    \begin{align*}
        D^\pi_{\mathrm{KL}}(P \| P G_S) = H(\pi) - H_\pi(P) - T(S) + U(S),
    \end{align*}
    where we recall $T,U$ are two supermodular functions introduced in Proposition \ref{prop:DPGdiffsub}. Thus, the map $\mathcal{X} \supseteq S \mapsto D^\pi_{\mathrm{KL}}(P \| P G_S)$ is a difference of two submodular functions.
\end{lemma}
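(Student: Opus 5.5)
The cleanest route is to combine the Pythagorean identity already displayed just before the lemma with the identities of Propositions \ref{prop:DPGdiffsub} and \ref{prop:entropyPG_S}. The first step of the identity reads
\[
D^\pi_{\mathrm{KL}}(P \| \Pi) = D^\pi_{\mathrm{KL}}(P \| P G_S) + D^\pi_{\mathrm{KL}}(P G_S \| \Pi),
\]
so $D^\pi_{\mathrm{KL}}(P \| P G_S) = D^\pi_{\mathrm{KL}}(P \| \Pi) - D^\pi_{\mathrm{KL}}(P G_S \| \Pi)$. By Proposition \ref{prop:DPGdiffsub}, the second term equals $T(S) - U(S)$. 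For the first term, I would repeat the computation in the proof of Proposition \ref{prop:entropyPG_S} with $PG_S$ replaced by $P$:
\[
\sum_{x,y}\pi(x)P(x,y)\log P(x,y) - \sum_{x,y}\pi(x)P(x,y)\log\pi(y).
\]
Stationarity of $\pi$ gives $\sum_x \pi(x)P(x,y) = \pi(y)$, so this equals $-H_\pi(P) + H(\pi)$. Substituting both pieces yields the claimed formula $H(\pi) - H_\pi(P) - T(S) + U(S)$.

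To avoid depending on the cited Pythagorean identity, I would also verify the formula directly as a cross-check. From the explicit form of $PG_S$ in the proof of Proposition \ref{prop:DPGdiffsub}, for $y \in S$ we have $\log\big(P(x,y)/PG_S(x,y)\big) = \log P(x,y) - \log\pi(y) + \log\pi(S) - \log P(x,S)$, and similarly for $y \in S'$. Weight by $\pi(x)P(x,y)$ and sum. The term $\sum_{y\in S}P(x,y)\log P(x,S)$ collapses to $\phi(P(x,S))$, which produces $-T(S)$. Using stationarity again, $\sum_x\pi(x)\sum_{y\in S}P(x,y)\log\pi(S) = \pi(S)\log\pi(S)$, which produces $+U(S)$. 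The remaining terms give $H(\pi) - H_\pi(P)$. The one point needing care is the convention $0\log 0 = 0$ when $P(x,S)=0$; in that case every $P(x,y)$ with $y\in S$ is zero as well, so all the affected terms vanish consistently.

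For the final claim, note that $H(\pi) - H_\pi(P)$ does not depend on $S$, so it is a constant and hence modular. The map $S \mapsto D^\pi_{\mathrm{KL}}(P\|PG_S)$ therefore equals $U(S) - T(S)$ plus a constant. By Proposition \ref{prop:DPGdiffsub}, both $U$ and $T$ are supermodular. Writing $U - T = (-T) - (-U)$ and absorbing the constant into either term exhibits the map as a difference of two submodular functions.

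I do not expect a genuine obstacle. The only delicate points are the zero-probability conventions and remembering that stationarity, not reversibility, is what drives the $U(S)$ and $H(\pi)$ terms.
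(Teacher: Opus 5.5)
Your proposal is correct and follows essentially the same route as the paper: use the Pythagorean identity to write $D^\pi_{\mathrm{KL}}(P\|PG_S)=D^\pi_{\mathrm{KL}}(P\|\Pi)-D^\pi_{\mathrm{KL}}(PG_S\|\Pi)$, then substitute $H(\pi)-H_\pi(P)$ for the first term and $T(S)-U(S)$ (Proposition \ref{prop:DPGdiffsub}) for the second, and read off the difference-of-submodular structure from $U-T=(-T)-(-U)$. Your additional term-by-term verification is also correct and makes the argument self-contained, showing that only stationarity is used, but it is not needed for the proof.
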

\begin{proof}
    We have
    \begin{align*}
        D^\pi_{\mathrm{KL}}(P \| P G_S) &= D^\pi_{\mathrm{KL}}(P \| \Pi) - D^\pi_{\mathrm{KL}}(P G_S \| \Pi) \\
        &= H(\pi) - H_\pi(P) - T(S) + U(S),
    \end{align*}
    where the first equality follows from the Pythagorean identity (see Corollary 6.1 of \cite{choi2025groupaveragedmarkovchainsmixing}) and the second equality makes use of Proposition \ref{prop:DPGdiffsub}.
\end{proof}

Next, we handle $D^\pi_{\mathrm{KL}}(P G_S \| G_V P G_S)$. First, we define $\psi:2^{\mathcal{X}} \times 2^{\mathcal{X}} \to \mathbb{R}^+$ to be
\begin{align*}
    \psi(V,S) = \psi(V,S,P,\pi) := - \left(\sum_{x \in V} \pi(x) P(x,S)\right) \log\left(\sum_{k \in V} \pi(k)P(k,S)\right).
\end{align*}
We calculate that
\begin{align}
    D^\pi_{\mathrm{KL}}&(P G_S \| G_V P G_S) \nonumber \\
    &= \sum_{x,y} \pi(x) PG_S(x,y) \log\left(\dfrac{P G_S(x,y)}{G_V P G_S(x,y)} \right) \nonumber \\
    &= \sum_x \pi(x) P(x,S) \log\left(\dfrac{P(x,S)}{G_V P(x, S)}\right) + \sum_x \pi(x) P(x,S') \log\left(\dfrac{P(x,S')}{G_V P(x, S')}\right) \nonumber \\
    &= T(S) - H(\overline{\pi}^V) - \sum_{x \in V} \pi(x) P(x,S) \log\left(\sum_{k \in V} \pi(k)P(k,S)\right) \nonumber \\
    &\quad -  \sum_{x \in V'} \pi(x) P(x,S) \log\left(\sum_{k \in V'} \pi(k)P(k,S)\right) -  \sum_{x \in V} \pi(x) P(x,S') \log\left(\sum_{k \in V} \pi(k)P(k,S')\right) \nonumber \\
    &\quad - \sum_{x \in V'} \pi(x) P(x,S') \log\left(\sum_{k \in V'} \pi(k)P(k,S')\right) \nonumber \\
    &= T(S) - H(\overline{\pi}^V) + \psi(V,S) + \psi(V',S) + \psi(V,S') + \psi(V',S'), \label{eq:decomposeGVPGS}
\end{align}
where we recall that $T$ is introduced in Proposition \ref{prop:DPGdiffsub} and $H(\overline{\pi}^V)$ is the Shannon entropy of the two-point distribution $\overline{\pi}^V := (\pi(V),\pi(V'))$.

\begin{lemma}\label{lem:decompose}
    Let $P \in \mathcal{L}(\pi)$. We have
    \begin{enumerate}
        \item\label{it:HoverlinepiV} The mapping
        \begin{align*}
            \mathcal{X} \supseteq V \mapsto H(\overline{\pi}^V) = -U(V)
        \end{align*}
        is a submodular function, where we recall $U(V)$ is supermodular by Proposition \ref{prop:DPGdiffsub}.

        \item\label{it:psiV} Fix $S \subseteq \mathcal{X}$. The mapping
        \begin{align*}
            \mathcal{X} \supseteq V \mapsto \psi(V,S) = \underbrace{\sum_{y \in S} \pi(y) P(y,V') \log\left(\sum_{k \in V} \pi(k)P(k,S)\right)}_{\text{submodular}} - \underbrace{\pi(S) \log\left(\sum_{k \in V} \pi(k)P(k,S)\right)}_{\text{submodular}}
        \end{align*}
        for $\sum_{k \in V} \pi(k)P(k,S) \neq 0$ is decomposed as a difference of two submodular functions. If  $\sum_{k \in V} \pi(k)P(k,S) = 0$ then $\psi(V,S) = \infty$.

        \item\label{it:psiS} Fix $V \subseteq \mathcal{X}$. The mapping
        \begin{align*}
            \mathcal{X} \supseteq S \mapsto \psi(V,S) = \underbrace{\sum_{x \in V} \pi(x) P(x,S') \log\left(\sum_{k \in V} \pi(k)P(k,S)\right)}_{\text{submodular}} - \underbrace{\pi(V) \log\left(\sum_{k \in V} \pi(k)P(k,S)\right)}_{\text{submodular}}
        \end{align*}
        for $\sum_{k \in V} \pi(k)P(k,S) \neq 0$ is decomposed as a difference of two submodular functions. If  $\sum_{k \in V} \pi(k)P(k,S) = 0$ then $\psi(V,S) = \infty$.
    \end{enumerate}
\end{lemma}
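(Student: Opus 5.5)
The plan is to reduce every item to a single fact: a concave function of a non-negative modular set function is submodular. This is the concave counterpart of Proposition 6.1 of \cite{bach2013learningsubmodularfunctionsconvex}, which was already used in the proof of Proposition \ref{prop:DPGdiffsub}.

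\textbf{Item \ref{it:HoverlinepiV}.} This is immediate. We have $H(\overline{\pi}^V) = -\phi(\pi(V)) - \phi(\pi(V')) = -U(V)$, and $U$ is supermodular by Proposition \ref{prop:DPGdiffsub}. Hence $-U$ is submodular.

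\textbf{Item \ref{it:psiV}.} Fix $S$ and set $a(V) := \sum_{k \in V} \pi(k) P(k,S)$, which is a non-negative modular function of $V$.
\begin{enumerate}
\item First verify the displayed identity. By $\pi$-reversibility,
$$a(V) = \sum_{y \in S} \pi(y) P(y,V) = \pi(S) - \sum_{y \in S}\pi(y)P(y,V').$$
Substituting this into $\psi(V,S) = -a(V)\log a(V)$ gives exactly the stated difference whenever $a(V) > 0$.
\item The subtracted term $\pi(S)\log a(V)$ is submodular, since $t \mapsto \pi(S)\log t$ is concave and $a$ is modular.
\item For the first term, write $b(V) := \sum_{y\in S}\pi(y)P(y,V') = \pi(S) - a(V)$. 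The term then equals $\Theta(a(V))$ with $\Theta(t) := (\pi(S)-t)\log t$ on $(0,\pi(S)]$, and $a(V) \le \pi(S)$ keeps us in this range. A direct computation gives
$$\Theta''(t) = -\frac{\pi(S)}{t^2} - \frac{1}{t} < 0,$$
so $\Theta$ is concave and $\Theta \circ a$ is submodular.
\end{enumerate}

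\textbf{Item \ref{it:psiS}.} Fix $V$ and set $c(S) := \sum_{k\in V}\pi(k)P(k,S)$, which is modular in $S$. Here no reversibility is needed, because
$$\sum_{x\in V}\pi(x)P(x,S') = \pi(V) - c(S)$$
holds for any stochastic $P$. The same two concavity arguments then go through with $\pi(S)$ replaced by $\pi(V)$, using $c(S) \le \pi(V)$.

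\textbf{Degenerate case and main obstacle.} When $a(V)=0$ (or $c(S)=0$) the logarithm terms are undefined, and we simply adopt the convention stated in the lemma. One subtlety needs care: concave-of-modular arguments require the domain of the concave function to contain the range of the modular function. Each decomposition is therefore asserted only on the family of sets where the argument of the logarithm is positive, or else the value $\infty$ must be handled by the extended-value convention. The one genuinely non-routine step is recognising that the first term is itself a concave function $\Theta$ of the same modular quantity. This holds via the identity $b = \pi(S) - a$ (respectively $\pi(V) - c$), and the check is the sign of $\Theta''$.
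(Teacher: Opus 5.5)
Your proposal is correct. Item 1, and the identities $a(V)=\pi(S)-\sum_{y\in S}\pi(y)P(y,V')$ (via reversibility) and $\sum_{x\in V}\pi(x)P(x,S')=\pi(V)-c(S)$, match the paper. You differ from the paper only in how you show that the first term in items 2 and 3 is submodular.

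\textbf{The paper's route.} It works summand by summand. For each $y\in S$ it views $-P(y,V')\log a(V)$ as a product of two non-negative, non-increasing, supermodular set functions: the modular $P(y,V')$, and $-\log a(V)$, which is non-negative because $a(V)\le 1$. It then applies Topkis's product lemma and sums over $y$, and likewise over $x\in V$ in item 3.

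\textbf{Your route.} You sum first, use $b=\pi(S)-a$ to rewrite the whole term as $\Theta(a(V))$, and reduce everything to the single concave-of-modular fact plus the check $\Theta''<0$.

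\textbf{What each buys.} The paper's route gives the finer statement that each summand is separately submodular. The cost is a second tool, with its sign and co-monotonicity hypotheses. Yours needs one tool and no sign bookkeeping.

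Your viewpoint also gives more than the lemma asks. Since $\Theta(t)=\pi(S)\log t+\eta(t)$ with $\eta(t):=-t\log t$, one has $\psi(V,S)=\eta(\sum_{k\in V}\pi(k)P(k,S))$ straight from the definition of $\psi$. Here $\eta$ is concave on $[0,\infty)$, and $\sum_{k\in V}\pi(k)P(k,S)$ is non-negative modular both in $V$ and in $S$. Hence $\psi$ is already submodular in each argument, and no reversibility is needed. So the difference decomposition is correct but not required for the difference-of-submodular structure of $\varphi$.

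\textbf{Degenerate case.} The family $\{V : a(V)>0\}$ is in general not closed under intersections, so restricting the domain to it does not give a lattice. For the two $\log$ terms, the right reading is the extended-value one with $\log 0=-\infty$; under it the submodular inequalities still hold.
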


\begin{proof}
    In this proof, we shall repeatedly use the following two properties:
    \begin{enumerate}[(a)]
        \item\label{it:convextransform} By Proposition 6.1 of \cite{bach2013learningsubmodularfunctionsconvex}, a convex (resp.~concave) function composes with a non-negative modular function is supermodular (resp.~submodular).

        \item\label{it:productsuper} By Corollary 2.6.3 of \cite{Topkis_1998}, the product of two non-negative, supermodular and both non-decreasing or both non-increasing set function is supermodular.
    \end{enumerate}

    We first show item \eqref{it:HoverlinepiV}, which follows from the fact that $U(V)$ is supermodular by Proposition \ref{prop:DPGdiffsub}.

    Next, we prove item \eqref{it:psiV}. First, by reversibility of $P$ we compute that
    \begin{align*}
        \psi(V,S) &= - \left(\sum_{y \in S} \pi(y) P(y,V)\right) \log\left(\sum_{k \in V} \pi(k)P(k,S)\right) \\
        &= - \left(\sum_{y \in S} \pi(y) (1-P(y,V'))\right) \log\left(\sum_{k \in V} \pi(k)P(k,S)\right) \\
        &= \sum_{y \in S} \pi(y) P(y,V') \log\left(\sum_{k \in V} \pi(k)P(k,S)\right) - \pi(S) \log\left(\sum_{k \in V} \pi(k)P(k,S)\right)
    \end{align*}
    By the previous paragraph we note that $\log\left(\sum_{k \in V} \pi(k)P(k,S)\right)$ is submodular in $V$. We also see that $-P(y,V') \log\left(\sum_{k \in V} \pi(k)P(k,S)\right)$ is supermodular as it is a product of a non-increasing, non-negative modular function $P(y,V')$ and non-increasing, non-negative supermodular function $-\log\left(\sum_{k \in V} \pi(k)P(k,S)\right)$. This item is shown as summing over $y \in S$ preserves submodularity.

    Finally, we prove item \eqref{it:psiS}. We see that
    \begin{align*}
        \psi(V,S) &= - \left(\sum_{x \in V} \pi(x) (1-P(x,S'))\right) \log\left(\sum_{k \in V} \pi(k)P(k,S)\right) \\
        &= \sum_{x \in V} \pi(x) P(x,S') \log\left(\sum_{k \in V} \pi(k)P(k,S)\right) - \pi(V) \log\left(\sum_{k \in V} \pi(k)P(k,S)\right).
    \end{align*}
    Similarly to the previous two paragraphs, we have $\log\left(\sum_{k \in V} \pi(k)P(k,S)\right)$ is submodular in $S$. We also note that $-P(x,S') \log\left(\sum_{k \in V} \pi(k)P(k,S)\right)$ is supermodular as it is a product of a non-increasing, non-negative modular function $P(x,S')$ and non-increasing, non-negative supermodular function $-\log\left(\sum_{k \in V} \pi(k)P(k,S)\right)$. This item is shown as summing over $x \in V$ preserves submodularity.
\end{proof}

By collecting Lemma \ref{lem:decomposePPGS}, \eqref{eq:decomposeGVPGS} and Lemma \ref{lem:decompose}, we see that the maximisation problem in \eqref{eq:argminargmaxGVPGS} boils down to
\begin{align*}
    &\argmax_{V,S \subseteq \mathcal{X};~V,S \neq \emptyset, \mathcal{X}} D^\pi_{\mathrm{KL}}(P \| P G_S) + D^\pi_{\mathrm{KL}}(P G_S \| G_V P G_S) \\
    &= \argmax_{V,S \subseteq \mathcal{X};~V,S \neq \emptyset, \mathcal{X}} U(S) - H(\overline{\pi}^V) + \psi(V,S) + \psi(V',S) + \psi(V,S') + \psi(V',S') \\
    &= \argmin_{V,S \subseteq \mathcal{X};~V,S \neq \emptyset, \mathcal{X}} -U(S) + H(\overline{\pi}^V) - \psi(V,S) - \psi(V',S) - \psi(V,S') - \psi(V',S') =: \varphi(V,S).
\end{align*}
Note that $\varphi(V,S)$ is a difference-of-submodular function.

We now propose a coordinate descent type algorithm to minimize $\varphi(V,S)$. Suppose that the algorithm is initialized at $V^0 \subseteq \mathcal{X}$. By Lemma \ref{lem:decompose}, the mapping $S \mapsto \varphi(V^0,S)$ is a difference-of-submodular function, thus one can invoke existing algorithms for (approximately) minimizing 
$$S^0 \in \argmin_{S \subseteq \mathcal{X};~S \neq \emptyset, \mathcal{X}} \varphi(V^0,S).$$
Using Lemma \ref{lem:decompose} again, the mapping $V \mapsto \varphi(V,S^0)$ is a difference-of-submodular function, and we (approximately) minimize
$$V^1 \in \argmin_{V \subseteq \mathcal{X};~V \neq \emptyset, \mathcal{X}} \varphi(V,S^0).$$
Proceeding iteratively, we alternately minimize the two coordinates. At iteration $l \in \mathbb{N}$, we (approximately) minimize
\begin{align*}
    V^l \in \argmin_{V \subseteq \mathcal{X};~V \neq \emptyset, \mathcal{X}} \varphi(V,S^{l-1}), \quad S^l \in \argmin_{S \subseteq \mathcal{X};~S \neq \emptyset, \mathcal{X}} \varphi(V^l,S).
\end{align*}
This yields a monotonically non-increasing sequence:
\begin{align*}
    \varphi(V^0,S^0) \geq \varphi(V^1,S^1) \geq \ldots \geq \varphi(V^l,S^l).
\end{align*}

\subsection{Submodular optimisation, minimisation of Frobenius norm and a MM type algorithm} \label{sub:frobsubmod}

In this subsection, we reveal the submodularity structure arising in the Frobenius norm between $G_S P$ or $G_S P G_S$ and $\Pi$. This allows us to take advantage of such structure to design optimisation algorithms to tune the choice of $S$.

First, we present a useful lemma:
\begin{lemma}\label{lem:Qxy}
    Assume that $P \in \mathcal{L}(\pi)$ and let $S \subseteq \mathcal{X}$. We have
    \begin{align}
        \sum_{x \in S} \sum_{y \in S'} \pi(x) P(x,y) &= \pi(S) - \sum_{x \in S} \sum_{y \in S} \pi(x) P(x,y) \label{eq:sumSS}\\
                                                     &= \pi(S') - \sum_{x \in S'} \sum_{y \in S'} \pi(x) P(x,y), \label{eq:sumS'S'}
    \end{align}
    where $\sum_{x \in \emptyset} := 0$. Consequently, the mappings
    \begin{align*}
        \mathcal{X} \supseteq S &\mapsto \sum_{x \in S} \sum_{y \in S} \pi(x) P(x,y), \quad 
        \mathcal{X} \supseteq S \mapsto \sum_{x \in S'} \sum_{y \in S'} \pi(x) P(x,y)
    \end{align*}
    are supermodular.
\end{lemma}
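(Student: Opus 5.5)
The two identities come from splitting rows of $P$ over $S\sqcup S'$ and then using stationarity; supermodularity is a direct check of the defining inequality.

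\textbf{The identities \eqref{eq:sumSS} and \eqref{eq:sumS'S'}.} For \eqref{eq:sumSS}, for each $x\in S$ we have $\sum_{y\in S'}P(x,y) = 1 - \sum_{y\in S}P(x,y)$. Multiplying by $\pi(x)$ and summing over $x\in S$ gives the claim. For \eqref{eq:sumS'S'}, I first use stationarity, which is all that reversibility is needed for here. Since $\pi P=\pi$, summing $\pi(x)P(x,y)$ over $x\in\mathcal X$ and $y\in S'$ gives $\pi(S')$, so
$$\sum_{x\in S}\sum_{y\in S'}\pi(x)P(x,y) = \pi(S') - \sum_{x\in S'}\sum_{y\in S'}\pi(x)P(x,y).$$
The convention $\sum_{x\in\emptyset}:=0$ covers the cases $S=\emptyset$ and $S=\mathcal X$.

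\textbf{Supermodularity of $F(S):=\sum_{x,y\in S}\pi(x)P(x,y)$.} I write $F$ as the quadratic form $F(S)=\sum_{x,y}w(x,y)\mathbf 1_S(x)\mathbf 1_S(y)$ with nonnegative weights $w(x,y)=\pi(x)P(x,y)$. Fix $A,B\subseteq\mathcal X$ and compare the coefficient of each $w(x,y)$ in $F(A\cup B)+F(A\cap B)$ with its coefficient in $F(A)+F(B)$. It suffices to show, for every pair $(x,y)$,
$$\mathbf 1_{A\cup B}(x)\mathbf 1_{A\cup B}(y)+\mathbf 1_{A\cap B}(x)\mathbf 1_{A\cap B}(y)\ \ge\ \mathbf 1_A(x)\mathbf 1_A(y)+\mathbf 1_B(x)\mathbf 1_B(y).$$
I check this by cases on the memberships of $x$ and $y$.
\begin{itemize}
\item If both $x,y\in A\cap B$, both sides equal $2$.
\item If the right side equals $2$, then $x,y\in A\cap B$, which is the previous case.
\item If the right side equals $1$, say $x,y\in A$, then $x,y\in A\cup B$, so the left side is at least $1$.
\item If the right side equals $0$, there is nothing to prove since the left side is nonnegative.
\end{itemize}
Multiplying by $w(x,y)\ge 0$ and summing over $(x,y)$ gives supermodularity.

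An alternative route is the cut representation. By \eqref{eq:sumSS}, $F(S)=\pi(S)-c(S)$ with $c(S)=\sum_{x\in S,\,y\in S'}\pi(x)P(x,y)$. This $c$ is a directed cut function with nonnegative weights, hence submodular. Since $\pi(S)$ is modular, $F$ is supermodular.

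\textbf{The complement map.} The map $S\mapsto\sum_{x,y\in S'}\pi(x)P(x,y)=F(S')$ is supermodular because complementation preserves supermodularity. Indeed, $(A\cup B)'=A'\cap B'$ and $(A\cap B)'=A'\cup B'$, so the defining inequality for $S\mapsto F(S')$ at $(A,B)$ is exactly the inequality for $F$ at $(A',B')$.

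Every step is an elementary verification. The only point needing care is using stationarity, rather than row sums alone, in \eqref{eq:sumS'S'}.
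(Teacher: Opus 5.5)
Your proof is correct, and it differs from the paper's only in two minor places.

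For the identities, \eqref{eq:sumSS} is derived exactly as in the paper. For the second identity, the paper uses detailed balance to rewrite $\sum_{x\in S}\sum_{y\in S'}\pi(x)P(x,y)$ as $\sum_{y\in S'}\sum_{x\in S}\pi(y)P(y,x)$, then applies \eqref{eq:sumSS} with $S'$ in place of $S$. You use only $\pi P=\pi$, summed over columns in $S'$. This shows the identity holds for every $P\in\mathcal{S}(\pi)$, not just reversible ones.

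For supermodularity, the paper's argument is exactly your ``alternative route'': by \eqref{eq:sumSS}, $F(S)=\pi(S)-c(S)$ with $c$ a nonnegative cut function, which is submodular by a cited fact; complementation then handles the second map. Your primary argument is the pairwise check on the quadratic form $\mathbf{1}_S^\top W\mathbf{1}_S$ with $W=(\pi(x)P(x,y))\ge 0$ entrywise. It is self-contained and uses neither the identities nor row sums, stationarity or reversibility; it needs only nonnegative weights. In effect it reproves the cited submodularity of cut functions. The paper's route is shorter given that citation, and it makes the ``consequently'' in the statement literal.
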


\begin{proof}
    When $S = \emptyset, \mathcal{X}$, the result trivially holds. Thus, in the remainder of the proof we assume that $S \neq \emptyset, \mathcal{X}$.

    To see the first equality \eqref{eq:sumSS} in the lemma statement, we see that
    \begin{align*}
        \sum_{x \in S} \sum_{y \in S'} \pi(x) P(x,y) &= \sum_{x \in S} \pi(x) (1-P(x,S)) = \pi(S) - \sum_{x \in S} \sum_{y \in S} \pi(x) P(x,y).
    \end{align*}
    To prove the second equality \eqref{eq:sumS'S'} in the statement, we note that, by the reversibility of $P$, 
    \begin{align*}
        \sum_{x \in S} \sum_{y \in S'} \pi(x) P(x,y) &= \sum_{y \in S'}\sum_{x \in S} \pi(y)P(y,x) = \pi(S') - \sum_{x \in S'} \sum_{y \in S'} \pi(x) P(x,y).
    \end{align*}
    where the last equality follows from \eqref{eq:sumSS} by replacing $S$ with $S'$.

    $\sum_{x \in S'} \sum_{y \in S'} \pi(x) P(x,y)$ is the complement function of $\sum_{x \in S} \sum_{y \in S} \pi(x) P(x,y)$. Since complement preserves supermodularity, it thus suffices for us to show that $\sum_{x \in S} \sum_{y \in S} \pi(x) P(x,y)$ is supermodular in $S$, which is indeed the case since by \eqref{eq:sumSS} it is the difference of a modular function $\pi(S)$ and a submodular function $\sum_{x \in S} \sum_{y \in S'} \pi(x) P(x,y)$ (a non-negative cut function is submodular, see e.g. \cite[Section $6.2$]{bach2013learningsubmodularfunctionsconvex}).
\end{proof}

Next, we recognize two set functions as supermodular and one set function as submodular:
\begin{lemma}
    Assume that $P \in \mathcal{L}(\pi)$. The following two mappings
    \begin{align*}
        \{S \subseteq \mathcal{X}; 0 < \pi(S) < 1\} \ni S &\mapsto \dfrac{1}{\pi(S)} \sum_{x \in S'} \sum_{y \in S'} \pi(x) P(x,y) \\
        \{S \subseteq \mathcal{X}; 0 < \pi(S) < 1\} \ni S &\mapsto \dfrac{1}{\pi(S')} \sum_{x \in S} \sum_{y \in S} \pi(x) P(x,y)
    \end{align*}
    are supermodular while the following mapping
    \begin{align*}
        \{S \subseteq \mathcal{X}; 0 < \pi(S) < 1\} \ni S &\mapsto 3 - \dfrac{1}{\pi(S)\pi(S')}
    \end{align*}
    is submodular.
\end{lemma}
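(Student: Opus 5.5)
The plan is to treat the three maps separately. In each case we reduce to the two composition rules already used in the proof of Lemma \ref{lem:decompose}:
\begin{enumerate}[(a)]
    \item a convex function of a non-negative modular function is supermodular (Proposition 6.1 of \cite{bach2013learningsubmodularfunctionsconvex});
    \item a product of non-negative supermodular functions that are monotone in the same direction is supermodular (Corollary 2.6.3 of \cite{Topkis_1998}).
\end{enumerate}
Throughout, the inequality $f(A\cap B)+f(A\cup B)\ge f(A)+f(B)$ is only required for $A,B$ with $A\cap B$ and $A\cup B$ in the domain $\{0<\pi(S)<1\}$. This is the reading under which $1/\pi(S)$ and $1/\pi(S')$ are finite, and I would state this convention explicitly at the start.

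For the third map, I would first use $\pi(S)+\pi(S')=1$ to write
\begin{align*}
    \frac{1}{\pi(S)\pi(S')} = \frac{1}{\pi(S)} + \frac{1}{\pi(S')} = \frac{1}{\pi(S)} + \frac{1}{1-\pi(S)}.
\end{align*}
Both $t\mapsto 1/t$ and $t\mapsto 1/(1-t)$ are convex on $(0,1)$, and $S\mapsto \pi(S)$ is non-negative and modular. By rule (a), each summand is supermodular in $S$, and hence so is their sum. Negating and adding the constant $3$ (a modular function) then shows that $3 - 1/(\pi(S)\pi(S'))$ is submodular.

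For the first map, I would write it as the product $\frac{1}{\pi(S)}\cdot Q'(S)$, where $Q'(S):=\sum_{x,y\in S'}\pi(x)P(x,y)$. The two factors are handled as follows.
\begin{itemize}
    \item $1/\pi(S)$ is non-negative and supermodular by the previous paragraph. It is non-increasing in $S$, since $\pi(S)$ grows as $S$ grows.
    \item $Q'(S)$ is supermodular by Lemma \ref{lem:Qxy}. It is non-negative, and it is non-increasing in $S$: enlarging $S$ shrinks $S'$, which only removes non-negative terms $\pi(x)P(x,y)$ from the double sum.
\end{itemize}
Rule (b) then gives supermodularity of the product.

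The second map is symmetric. Here $1/\pi(S') = 1/(1-\pi(S))$ is non-negative, supermodular by rule (a), and non-decreasing in $S$. Likewise $Q(S):=\sum_{x,y\in S}\pi(x)P(x,y)$ is non-negative, supermodular by Lemma \ref{lem:Qxy}, and non-decreasing in $S$. Both factors are therefore monotone in the same direction, and rule (b) applies again.

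The step I expect to need the most care is the application of rules (a) and (b) on the restricted family $\{0<\pi(S)<1\}$ rather than on the full lattice $2^{\mathcal X}$. The convex functions $1/t$ and $1/(1-t)$ blow up at the endpoints, and the union of two admissible sets may be $\mathcal X$. To handle this, I would check directly that the proofs of both rules are pointwise four-set arguments. Such an argument only involves the values at $A$, $B$, $A\cap B$ and $A\cup B$. Consequently, it goes through verbatim whenever all four sets lie in the domain, which is exactly what the stated convention requires.
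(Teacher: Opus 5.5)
Your proof is correct and follows essentially the same route as the paper. Like the paper, you use rule (a) for the reciprocal factors $1/\pi(S)$ and $1/\pi(S')$, Lemma \ref{lem:Qxy} for the double sums, and the Topkis product rule for the two products. The only differences are cosmetic: you split $1/(\pi(S)\pi(S')) = 1/\pi(S) + 1/(1-\pi(S))$ where the paper directly invokes concavity of $t \mapsto 3 - 1/(t(1-t))$, and you make explicit the domain convention (meet and join lying in $\{0<\pi(S)<1\}$) that the paper leaves implicit.
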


\begin{proof}
    We again make use of item \eqref{it:convextransform} and \eqref{it:productsuper} repeatedly. 
    
    Precisely, since the mapping $(0,1) \ni t \mapsto 1/t$ is convex, $1/\pi(S)$ (resp.~$1/\pi(S')$) is thus supermodular by item \eqref{it:convextransform} and furthermore non-increasing (resp.~non-decreasing). Therefore, $\frac{1}{\pi(S)} \sum_{x \in S'} \sum_{y \in S'} \pi(x) P(x,y)$ can be seen as a product of two non-increasing, non-negative, supermodular functions by Lemma \ref{lem:Qxy}, and hence supermodular by item \eqref{it:productsuper}. Similarly, $\frac{1}{\pi(S')} \sum_{x \in S} \sum_{y \in S} \pi(x) P(x,y)$ can be seen as a product of two non-decreasing, non-negative, supermodular functions by Lemma \ref{lem:Qxy}, and hence supermodular by item \eqref{it:productsuper}.

    We also note that the mapping $(0,1) \ni t \mapsto 3 - \frac{1}{t(1-t)}$ is concave, and hence by item \eqref{it:convextransform}, $3 - \dfrac{1}{\pi(S)\pi(S')}$ is submodular.
\end{proof}

Taking advantages of the previous two lemmas, the main result of this subsection decomposes the Frobenius norm of $G_S P G_S$ or $G_S P$ to $\Pi$ as a difference of two submodular functions:
\begin{proposition}\label{prop:GPGFdecompose}
    Assume that $P \in \mathcal{L}(\pi)$ and $S \subseteq \mathcal{X}$ with $S \neq \emptyset,\mathcal{X}$.
    \begin{enumerate}
        \item\label{it:GPGdecompose} Assume further that $P$ is positive-semidefinite. We decompose
        \begin{align*}
            \|G_SPG_S - \Pi\|_{F, \pi} &= \underbrace{3 - \dfrac{1}{\pi(S)\pi(S')}}_{\text{submodular}} + \underbrace{\dfrac{1}{\pi(S)} \sum_{x \in S'} \sum_{y \in S'} \pi(x) P(x,y)}_{\text{supermodular}} \\
            &\quad + \underbrace{\dfrac{1}{\pi(S')} \sum_{x \in S} \sum_{y \in S} \pi(x) P(x,y)}_{\text{supermodular}}
        \end{align*}
        as a difference of two submodular functions.

        \item\label{it:GPdecompose} We decompose
        \begin{align*}
            \|G_SP - \Pi\|_{F, \pi}^2 &= \underbrace{3 - \dfrac{1}{\pi(S)\pi(S')}}_{\text{submodular}} + \underbrace{\dfrac{1}{\pi(S)} \sum_{x \in S'} \sum_{y \in S'} \pi(x) P^2(x,y)}_{\text{supermodular}} \\
            &\quad + \underbrace{\dfrac{1}{\pi(S')} \sum_{x \in S} \sum_{y \in S} \pi(x) P^2(x,y)}_{\text{supermodular}}
        \end{align*}
        as a difference of two submodular functions.
    \end{enumerate}
\end{proposition}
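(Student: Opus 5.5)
We reduce both items to identities already established, then do a short algebraic rearrangement. The submodularity labels come directly from the two preceding lemmas.

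For item \eqref{it:GPGdecompose}, we invoke Proposition \ref{prop:GSPGSlPi} with $l=1$ and $P$ positive-semidefinite. This gives $\|G_SPG_S - \Pi\|_{F,\pi} = 1 - g(S,P)$, with $g(S,P) = \frac{1}{\pi(S)\pi(S')}\sum_{x\in S, y\in S'}\pi(x)P(x,y)$. The task is then to rewrite $1-g(S,P)$ in the displayed form. Write $C := \sum_{x \in S}\sum_{y\in S'} \pi(x)P(x,y)$. Note that
\[
\frac{1}{\pi(S)\pi(S')} = \frac{1}{\pi(S)} + \frac{1}{\pi(S')},
\]
because $\pi(S)+\pi(S')=1$. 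Hence
\[
g = \frac{C}{\pi(S)} + \frac{C}{\pi(S')}.
\]
Lemma \ref{lem:Qxy} gives two expressions for $C$. We use \eqref{eq:sumS'S'} in the term with $1/\pi(S)$ and \eqref{eq:sumSS} in the term with $1/\pi(S')$. This yields
\[
g = \frac{\pi(S') - Q(S')}{\pi(S)} + \frac{\pi(S) - Q(S)}{\pi(S')},
\]
where $Q(A) := \sum_{x,y\in A}\pi(x)P(x,y)$.

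So we need
\[
1 - \frac{\pi(S')}{\pi(S)} - \frac{\pi(S)}{\pi(S')} = 3 - \frac{1}{\pi(S)\pi(S')}.
\]
To verify this, write $p=\pi(S)$. Then
\[
\frac{1-p}{p} + \frac{p}{1-p} = \frac{(1-p)^2+p^2}{p(1-p)} = \frac{1-2p(1-p)}{p(1-p)} = \frac{1}{p(1-p)} - 2.
\]
Therefore $1 - g$ equals
\[
3 - \frac{1}{\pi(S)\pi(S')} + \frac{Q(S')}{\pi(S)} + \frac{Q(S)}{\pi(S')},
\]
which is exactly the claimed expression.

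The submodular and supermodular labels follow from the preceding lemma applied to $P$. A supermodular function is the negative of a submodular one, so the sum is submodular minus submodular.

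For item \eqref{it:GPdecompose}, we proceed identically using Proposition \ref{prop:GP argmin}: $\|G_SP - \Pi\|_{F,\pi}^2 = 1 - g(S,P^2)$. We apply the same algebra with $P$ replaced by $P^2$. This is legitimate because $P^2 \in \mathcal{L}(\pi)$ whenever $P\in\mathcal{L}(\pi)$. Consequently, Lemma \ref{lem:Qxy} and the preceding lemma apply to $P^2$ verbatim; no positive-semidefiniteness is needed here.

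The only step requiring care is the algebraic identity for the $\pi(S)$-only terms, together with choosing the correct form of $C$ for each denominator. If the wrong form of $C$ is paired with a denominator, the result is mixed terms like $Q(S)/\pi(S)$, which are not of the claimed shape. The rest is bookkeeping.
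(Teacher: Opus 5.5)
Your proof is correct and follows essentially the same route as the paper: you start from $1-g(S,P)$ (resp.\ $1-g(S,P^2)$) via Proposition~\ref{prop:GSPGSlPi} (resp.\ Proposition~\ref{prop:GP argmin}), rewrite the cut sum using the two identities of Lemma~\ref{lem:Qxy}, simplify the $\pi(S)$-only terms to $3 - 1/(\pi(S)\pi(S'))$, and read off the labels from the preceding lemma. Splitting $1/(\pi(S)\pi(S')) = 1/\pi(S) + 1/\pi(S')$ up front is only a tidier ordering of the same algebra, and your remark that $P^2 \in \mathcal{L}(\pi)$ makes explicit what the paper leaves implicit when it substitutes $P^2$ for $P$.
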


\begin{proof}
    We first prove item \eqref{it:GPGdecompose}. By Proposition \ref{prop:GSPGSlPi}, we compute that
    \begin{align*}
        \|G_SPG_S - \Pi\|_{F, \pi} &= 1 - \dfrac{1}{\pi(S)\pi(S')} \sum_{\substack{x \in S\\ y \in S'}} \pi(x) P(x,y) \\
                                   &= 1 -  \dfrac{1}{\pi(S)\pi(S')} \left(\pi(S') - \sum_{x \in S'} \sum_{y \in S'} \pi(x) P(x,y)\right) \\
                                   &= 1 - \dfrac{1}{\pi(S)} + \dfrac{1}{\pi(S)} \sum_{x \in S'} \sum_{y \in S'} \pi(x) P(x,y) \\
                                   &\quad + \dfrac{1}{\pi(S')} \sum_{x \in S'} \sum_{y \in S'} \pi(x) P(x,y) \\
                                   &= 1 - \dfrac{1}{\pi(S)} + \dfrac{1}{\pi(S)} \sum_{x \in S'} \sum_{y \in S'} \pi(x) P(x,y) \\
                                   &\quad + \dfrac{1}{\pi(S')} \left(\pi(S') - \pi(S) + \sum_{x \in S} \sum_{y \in S} \pi(x) P(x,y)\right) \\
                                   &= 3 - \dfrac{1}{\pi(S)\pi(S')} + \dfrac{1}{\pi(S)} \sum_{x \in S'} \sum_{y \in S'} \pi(x) P(x,y) \\
            &\quad + \dfrac{1}{\pi(S')} \sum_{x \in S} \sum_{y \in S} \pi(x) P(x,y),
    \end{align*}
    where we use Lemma \ref{lem:Qxy} in the second and the fourth equality.

    Next, we prove item \eqref{it:GPdecompose}. By Proposition \ref{prop:GP argmin}, we note that
    \begin{align*}
        \|G_SP - \Pi\|_{F, \pi}^2 &= 1 - \dfrac{1}{\pi(S)\pi(S')} \sum_{\substack{x \in S\\ y \in S'}} \pi(x) P^2(x,y),
    \end{align*}
    and the desired result follows from the exact same proof of item \eqref{it:GPGdecompose} with $P$ being replaced by $P^2$.
\end{proof}

Similar to the earlier Subsection \ref{subsec:MMKL} where we design majorisation-minimisation (MM) type algorithms to minimize the KL divergence, one can make use of the difference-of-submodular structure as presented in Proposition \ref{prop:GPGFdecompose} to design MM type algorithms to minimise the Frobenius norm.

Specifically, since $(0,1) \ni t \mapsto 1/(t(1-t))$ is convex, by the supporting hyperplane theorem for convex differentiable function we have, for $S,S^0 \subseteq \mathcal{X}$ with $S, S^0 \neq \emptyset,\mathcal{X}$, 
\begin{align*}
    \dfrac{1}{\pi(S)\pi(S')} \geq \dfrac{1}{\pi(S^0)\pi(S^{0'})} + \dfrac{2\pi(S^0) - 1}{\pi(S^0)^2 (1 - \pi(S^0))^2}(\pi(S) - \pi(S^0)),
\end{align*}
in which this lower bound is clearly modular in $S$. Using the above inequality in Proposition \ref{prop:GPGFdecompose} we see that for reversible $P$, we obtain a majorisation function $\zeta(S;S^0)$ which is supermodular in $S$:
\begin{align*}
            \|G_SP - \Pi\|_{F, \pi}^2 &\leq 3 - \dfrac{1}{\pi(S^0)\pi(S^{0'})} + \dfrac{1 - 2\pi(S^0)}{\pi(S^0)^2 (1 - \pi(S^0))^2}(\pi(S) - \pi(S^0)) \\
            &\quad + \dfrac{1}{\pi(S)} \sum_{x \in S'} \sum_{y \in S'} \pi(x) P^2(x,y) 
            + \dfrac{1}{\pi(S')} \sum_{x \in S} \sum_{y \in S} \pi(x) P^2(x,y) \\
            &=: \zeta(S;S^0) = \zeta(S,P^2,\pi;S^0)
\end{align*}

Now, we propose a MM type algorithm to minimise $\|G_SP - \Pi\|_{F, \pi}^2$. Suppose that the algorithm is initialised at $S^0 \subseteq \mathcal{X}$ with $S^0 \neq \emptyset,\mathcal{X}$. We then (approximately) minimise the supermodular majorisation function
\begin{align*}
    S^1 \in \argmin_{S \subseteq \mathcal{X};~ S \neq \emptyset, \mathcal{X}} \zeta(S;S^0).
\end{align*}
This supermodular minimisation can be done for instance using the approximation algorithms presented in \cite{uriel_2011}. This yields
\begin{align*}
    \|G_{S^0}P - \Pi\|_{F, \pi}^2 = \zeta(S^0;S^0) \geq \zeta(S^1;S^0) \geq \|G_{S^1}P - \Pi\|_{F, \pi}^2 = \zeta(S^1;S^1).
\end{align*}
We then iterate the above procedure. For $l \in \mathbb{N}$, we (approximately) minimise the supermodular majorisation function
\begin{align*}
    S^l \in \argmin_{S \subseteq \mathcal{X};~ S \neq \emptyset, \mathcal{X}} \zeta(S;S^{l-1}).
\end{align*}
This gives a non-increasing sequence of Frobenius norm:
\begin{align*}
    \|G_{S^0}P - \Pi\|_{F, \pi}^2 \geq \|G_{S^1}P - \Pi\|_{F, \pi}^2 \geq \ldots \geq \|G_{S^l}P - \Pi\|_{F, \pi}^2.
\end{align*}

Similar MM type algorithms can also be developed for minimising $\|G_{S}P G_S - \Pi\|_{F, \pi}$.

\section{Positive definite samplers $P$} \label{sec:posdef}
In this section, we investigate characteristics of the samplers $GP$ and $GPG$ when $P$ is taken to be a positive definite sampler with respect to $\langle \cdot,\cdot \rangle_\pi.$ Recall the Sylvester's equation, where given any $A,B,C \in \mathbb{R}^{n\times n}$, we have a linear matrix equation in $X \in \mathbb{R}^{n\times n}$ given by 
$$AX + XB = C.$$

Sylvester's theorem states that for any given $A, B$, the equation $AX + XB = C$ has a unique solution for $X$ for any $C$ if and only if $A$ and $-B$ do not share any eigenvalues. A review of the results can be found in \cite[Theorem 2.4.4.1]{Horn_Johnson_1991}.

\begin{proposition}
    Suppose $P \in \mathcal{S}(\pi)$ and both $P, G \neq \Pi$. If $GP = \Pi$, $P$ and $-P^*$ must share at least one common eigenvalue.   
\end{proposition}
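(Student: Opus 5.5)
The plan is to apply Sylvester's theorem in contrapositive form. I will exhibit, for one right-hand side $C$, two different solutions of an equation $AX + XB = C$ with $\{A,B\}$ essentially $\{P^*,P\}$. Non-uniqueness then forces $A$ and $-B$ to share an eigenvalue.

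First I translate the hypothesis $GP = \Pi$ into an adjoint identity. For $\pi \in \mathcal{P}(\mathcal{X})$ we have $\Pi^*(x,y) = \pi(y)\Pi(y,x)/\pi(x) = \pi(y)$, so $\Pi^* = \Pi$. The Gibbs kernel is $\pi$-reversible, so $G^* = G$. Taking adjoints of $GP = \Pi$ therefore gives $P^* G = \Pi$.

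Next I consider the Sylvester equation $P^* X + X P = C$ with $C := 2\Pi$.
\begin{itemize}
\item $X = G$ is a solution: $P^*G + GP = \Pi + \Pi = 2\Pi$.
\item $X = \Pi$ is also a solution, for two reasons. Since $P \in \mathcal{S}(\pi)$, the time reversal $P^*$ is again a transition matrix, so $P^*$ maps constant functions to themselves and $P^*\Pi = \Pi$. Stationarity $\pi P = \pi$ gives $\Pi P = \Pi$. Hence $P^*\Pi + \Pi P = 2\Pi$.
\end{itemize}
By assumption $G \neq \Pi$, so this equation has two distinct solutions. By Sylvester's theorem \cite[Theorem 2.4.4.1]{Horn_Johnson_1991}, $P^*$ and $-P$ must share an eigenvalue $\lambda$.

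Finally I convert this into the stated form. The adjoint $P^* = D_\pi^{-1} P^\top D_\pi$, with $D_\pi = \mathrm{diag}(\pi)$, is similar to $P^\top$, so $P$ and $P^*$ have the same spectrum. Hence $\lambda$ is an eigenvalue of $P$. Likewise $-\lambda$ is an eigenvalue of $P^*$, so $\lambda$ is an eigenvalue of $-P^*$. Thus $P$ and $-P^*$ share the eigenvalue $\lambda$.

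The step needing the most care is checking that $\Pi$ solves the equation. This requires both $P^*\Pi = \Pi$, which uses that $P^*$ is stochastic, and $\Pi P = \Pi$, which uses stationarity. The other steps are bookkeeping of adjoints and spectra. The hypothesis $P \neq \Pi$ is not needed for this argument.
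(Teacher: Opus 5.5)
Your proof is correct and follows the paper's route exactly: the same Sylvester equation $XP + P^*X = 2\Pi$, with the two distinct solutions $X = G$ and $X = \Pi$, followed by Sylvester's theorem. You also fill in steps the paper leaves implicit. These are $P^*G = \Pi$ via adjoints, the check that $\Pi$ solves the equation using stochasticity of $P^*$ and stationarity of $\pi$, and the passage from a common eigenvalue of $P^*$ and $-P$ to one of $P$ and $-P^*$, using that $P$ and $P^*$ have the same spectrum.
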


\begin{proof}
    Consider the Sylvester's equation 
    $$XP + P^*X = 2\Pi.$$
    Since $GP = \Pi$, $X$ has two solutions $G$ and $\Pi$. It then follows from Sylvester's theorem that $P$ and $-P^*$ share at least one common eigenvalue.
\end{proof}

\begin{corollary}
    For any $\pi$-reversible, ergodic and positive-definite $P$, if $G, P \neq \Pi$, then $GP \neq \Pi.$
\end{corollary}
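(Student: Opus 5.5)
We argue by contradiction, using the preceding proposition. Suppose $P$ is $\pi$-reversible, ergodic and positive-definite, $G, P \neq \Pi$, and yet $GP = \Pi$. Since $P \in \mathcal{L}(\pi) \subseteq \mathcal{S}(\pi)$, the previous proposition shows that $P$ and $-P^*$ share at least one eigenvalue. Reversibility gives $P^* = P$, so there would be some $\lambda$ with both $\lambda$ and $-\lambda$ eigenvalues of $P$.

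Next we use positive-definiteness. Since $P$ is self-adjoint in $\ell^2(\pi)$, its eigenvalues are real. Positive-definiteness means $\langle f, Pf\rangle_\pi > 0$ for every $f \neq 0$, and applying this to an eigenvector shows every eigenvalue is strictly positive. Then $\lambda > 0$ and $-\lambda > 0$ cannot both hold, which is the desired contradiction. Hence $GP \neq \Pi$.

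The only delicate point is that the previous proposition relies on Sylvester's theorem, applied to $XP + P^*X = 2\Pi$ with the two candidate solutions $X = G$ and $X = \Pi$. These are distinct because $G \neq \Pi$ by assumption. We also need both to solve the equation:
\begin{itemize}
\item For $X = \Pi$: $\Pi P = \Pi$ by stationarity, and $P^*\Pi = \Pi$ since $P^*$ is stochastic.
\item For $X = G$: $GP = \Pi$ by hypothesis, and $P^*G = (GP)^* = \Pi^* = \Pi$ because $G$ is self-adjoint and $\Pi^* = \Pi$.
\end{itemize}
So the equation has two distinct solutions, non-uniqueness holds, and Sylvester's theorem yields a common eigenvalue of $P$ and $-P^*$.

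With that checked, the corollary follows immediately. The hypothesis $P \neq \Pi$ is not needed beyond the previous proposition, and ergodicity is not used essentially.
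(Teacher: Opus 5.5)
Your proposal is correct and follows essentially the same approach as the paper. Both argue through the preceding Sylvester-based proposition: reversibility gives $P^* = P$, and positive-definiteness puts every eigenvalue of $P$ in $(0,\infty)$, so $P$ and $-P$ cannot share an eigenvalue. Your extra verification that $X = G$ and $X = \Pi$ both solve $XP + P^*X = 2\Pi$ is accurate and fills in detail the paper leaves implicit.
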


\begin{proof}
    This follows from the preceding proposition, noting that a positive definite $P$ must have eigenvalues in $(0,1]$. It is thus impossible for $P$ and $-P^* = -P$ to share any eigenvalue. 
\end{proof}

A similar observation can be shown for $GPG$.

\begin{proposition}
    Suppose $P \in \mathcal{S}(\pi)$ and $P, G \neq \Pi$. If $GPG = \Pi$, $GP$ and $-P^*G$ must share at least one common eigenvalue. Further, $PG$ and $-GP^*$ must also share at least one common eigenvalue. 
\end{proposition}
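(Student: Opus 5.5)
We mirror the proof of the preceding proposition for $GP$: exhibit a Sylvester equation with two distinct solutions, then invoke Sylvester's theorem, which says that if $AX + XB = C$ has more than one solution, then $A$ and $-B$ share an eigenvalue.

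\textbf{First claim.} Consider the Sylvester equation
$$(GP)X + X(P^*G) = 2\Pi$$
in the unknown $X$. We check that $X = \Pi$ and $X = G$ are both solutions.

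For $X = \Pi$, we need $GP\Pi = \Pi$ and $\Pi P^* G = \Pi$. Since $P$ and $G$ are stochastic, $P\Pi = \Pi$ and $G\Pi = \Pi$, so $GP\Pi = \Pi$. Since $\pi P^* = \pi$ (the time reversal is $\pi$-stationary) and $\pi G = \pi$, we get $\Pi P^* G = \Pi$. The two terms sum to $2\Pi$.

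For $X = G$, idempotence $G^2 = G$ gives $GPG + GP^*G = 2\Pi$. The hypothesis gives $GPG = \Pi$. Taking $\ell^2(\pi)$-adjoints, $(GPG)^* = GP^*G$ because $G^* = G$, and $\Pi^* = \Pi$. Hence $GP^*G = \Pi$ as well, and the sum is $2\Pi$.

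Since $G \neq \Pi$, the equation has two distinct solutions, so the solution is not unique. By Sylvester's theorem, $GP$ and $-P^*G$ share an eigenvalue.

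\textbf{Second claim.} Use the equation
$$(PG)X + X(GP^*) = 2\Pi.$$
Again $X = \Pi$ is a solution, since $PG\Pi = \Pi$ and $\Pi G P^* = \Pi$. For $X = G$, the left side becomes $PG + GP^*$, which does not obviously equal $2\Pi$. So instead we try $X = G$ with the roles of the factors rearranged.

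The cleaner route is to take the first identity $(GP)G + G(P^*G) = 2\Pi$ and apply the adjoint. This yields $G(P^*G) + (GP)G = 2\Pi$ again, which is the same identity and gives nothing new.

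The better alternative is similarity. Eigenvalues of $PG$ coincide with those of $GP$, since $AB$ and $BA$ have the same spectrum. Likewise $GP^*$ and $P^*G$ have the same spectrum. Therefore a common eigenvalue of $GP$ and $-P^*G$ is also a common eigenvalue of $PG$ and $-GP^*$. The second claim follows immediately from the first, with no second Sylvester equation needed.

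\textbf{Main obstacle.} The only delicate step is verifying $GP^*G = \Pi$ from $GPG = \Pi$. This uses the self-adjointness of $G$ and of $\Pi$ in $\ell^2(\pi)$, which relies on $\pi$-stationarity of $P$ so that $P^*$ is well defined as the time reversal. This needs to be stated carefully.
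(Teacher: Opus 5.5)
Your proof is correct. The first claim uses exactly the paper's second Sylvester equation, $XP^*G + GPX = 2\Pi$, with the same two solutions $X = G$ and $X = \Pi$.

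For the second claim your route differs. The paper writes down another Sylvester equation, $XPG + GP^*X = 2\Pi$. Here $A = GP^*$ sits on the left and $B = PG$ on the right, which is the placement your abandoned attempt $(PG)X + X(GP^*)$ had backwards. The paper then checks $X = G$ and $X = \Pi$ again, using $GP^*G = (GPG)^* = \Pi$. You instead use the fact that $AB$ and $BA$ have the same eigenvalues to transfer the shared eigenvalue from the pair $(GP, -P^*G)$ to the pair $(PG, -GP^*)$.

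Your route is shorter. It also shows that the ``further'' assertion is not an independent fact but is equivalent to the first. The paper's treatment is symmetric, swapping $P$ and $P^*$, but it hides this redundancy.

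Two small inaccuracies, neither affecting validity:
\begin{itemize}
\item The identity $(GP)G + G(P^*G) = GPG + GP^*G$ needs no idempotence.
\item Your argument for the first claim never actually uses $\pi$-stationarity of $P$. The self-adjointness of $G$ and $\Pi$, and $\pi P^* = \pi$, hold for any stochastic $P$. Stationarity is what the paper's equation $XPG + GP^*X = 2\Pi$ needs at $X = \Pi$, namely $\Pi P = \Pi$ and $P^*\Pi = \Pi$.
\end{itemize}
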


\begin{proof}
    Consider the two equations 
    \begin{align*}
        XPG + GP^*X &= 2\Pi\\
        XP^*G + GPX &= 2\Pi.
    \end{align*}
    For both, $X$ has solutions $X = G$ and $X = \Pi$. The conclusion holds by applying Sylvester's theorem.
\end{proof}

However, unlike the case for $GP$, it is possible for $GP$ and $-P^*G$ to share the common eigenvalue $0$. We thus take a different approach in showing how $GPG \neq \Pi$ when $P$ is positive definite. 

\begin{proposition}
    For any $\pi$-reversible, ergodic and positive definite $P$, if $G, P \neq \Pi$ then $GPG \neq \Pi.$
\end{proposition}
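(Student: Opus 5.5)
The plan is a direct quadratic-form argument. Unlike the $GP$ case, it does not go through Sylvester's theorem, which fails here because $0$ may be a common eigenvalue. Suppose, for contradiction, that $GPG = \Pi$. I will exhibit a nonzero test function $f \in \ell^2_0(\pi)$ that is fixed by $G$. Then the quadratic form $\langle f, GPG f\rangle_\pi$ is strictly positive by positive definiteness of $P$, while $\langle f, \Pi f\rangle_\pi = 0$.

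First, I use the assumption $G \neq \Pi$ to get at least two orbits. Indeed, if the partition had a single orbit $\mathcal{X}$, then $G(x,y) = \pi(y)$ for all $x,y$, i.e.\ $G = \Pi$. Hence there is an orbit $\mathcal{O}_i$ with $0 < \pi(\mathcal{O}_i) < 1$. I then take
$$f := \mathbf{1}_{\mathcal{O}_i} - \pi(\mathcal{O}_i).$$
This $f$ is constant on every orbit, so $Gf = f$, since $G$ averages $f$ over $\mathcal{O}(x)$ against $\pi$. It satisfies $\pi(f) = 0$, and it is not identically zero because $\pi(\mathcal{O}_i) \in (0,1)$.

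Next I compute both quadratic forms. Using that $G$ is $\pi$-self-adjoint (from $G \in \mathcal{L}(\pi)$) and $Gf = f$,
$$\langle f, GPG f\rangle_\pi = \langle Gf, P\, Gf\rangle_\pi = \langle f, Pf\rangle_\pi > 0,$$
where the strict inequality is exactly positive definiteness of $P$ with respect to $\langle\cdot,\cdot\rangle_\pi$, applied to $f \neq 0$. On the other hand, $\Pi f(x) = \pi(f) = 0$ for every $x$, so $\langle f, \Pi f\rangle_\pi = 0$. Therefore $GPG \neq \Pi$. The hypotheses $P \neq \Pi$ and ergodicity are not really needed beyond what positive definiteness already gives; positive definiteness excludes $P = \Pi$, since $\Pi$ kills $\ell^2_0(\pi)$.

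The only delicate point is making sure the test function lies in the range of $G$ and is orthogonal to constants. This is why I choose the orbit indicator rather than an arbitrary mean-zero function. It also explains why the Sylvester approach was abandoned: the kernel of $G$ is invisible to $GPG$, so the argument must be restricted to $\mathrm{range}(G) \cap \ell^2_0(\pi)$, which is nontrivial precisely when $G \neq \Pi$.
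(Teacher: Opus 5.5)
Your proposal is correct and follows essentially the same route as the paper. Both arguments pick a nonzero mean-zero function in the range of $G$ and contradict positive definiteness of $P$ through $\langle f, Pf\rangle_\pi = \langle f, GPGf\rangle_\pi = \langle f, \Pi f\rangle_\pi = 0$. The paper's test function is $\pi(\mathcal{O}_2)\mathbf{1}_{\mathcal{O}_1} - \pi(\mathcal{O}_1)\mathbf{1}_{\mathcal{O}_2}$ rather than your $\mathbf{1}_{\mathcal{O}_i} - \pi(\mathcal{O}_i)$, and it phrases the last step as $Pf \in \ker(G) \perp \mathrm{Im}(G)$ instead of using self-adjointness of $G$ directly.
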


\begin{proof}
    Define $\mathrm{Im}(G)$ to be the image of $G$ and $\mathrm{ker}(G)$ be its kernel. 
    
    Since $G \neq \Pi$, $\mathrm{dim}(\mathrm{Im}(G)) \geq 2$, and so there exists a non-zero $h \in \mathrm{Im}(G)$ with mean zero (i.e. $\langle h, \mathbf{1}\rangle_\pi = 0$).

    Suppose $GPG = \Pi$. It follows that 
    $$GPG\ h = GP\ h = \Pi h = 0,$$
    where the last equality holds since $h$ is mean 0. For example, one such $h$ would be 
    $$h(x) = \begin{cases}
        \pi(\mathcal{O}_2) & \mathrm{for}\ x \in \mathcal{O}_1,\\
        -\pi(\mathcal{O}_1) & \mathrm{for}\ x \in \mathcal{O}_2,\\
        0 & \mathrm{otherwise}.
    \end{cases}$$

    Since $GPh = 0$, $Ph \in \mathrm{ker}(G)$, and so it must follow that 
    $$\langle h, Ph \rangle_\pi = 0,$$
    which contradicts the fact that $P$ is positive-definite given that $h$ is non-zero.
\end{proof}

\section{Numerical experiments} \label{sec:num}
In this section, we present numerical results regarding the performance of the various samplers discussed. The source codes for all plots and results can be found in \url{https://github.com/ryan-limjy/submodular}.

We briefly recall the Curie-Weiss model that we use as a benchmark to test the convergence of various samplers. Consider the discrete $d$-dimensional state space 
$$\mathcal{X} = \{-1, +1\}^d,$$
and define the Hamiltonian function for $x = (x^1, \dots, x^d) \in \mathcal{X}$ be  
$$\mathcal{H}(x) = - \sum_{i,j=1}^d \frac{1}{2^{|j-i|}} x^i x^j - h \sum_{i=1}^d x^i.$$
In this context, the interaction coefficient is given by $\frac{1}{2^{|j-i|}}$ and the external magnetic field is $h \in \mathbb{R}$. An in-depth discussion of the model can be found in \cite{Bovier_denHollander_2015}.

The stationary distribution $\pi$ is then the Gibbs distribution at temperature $T > 0.$ That is, 
$$\pi(x) = \frac{e^{-\frac{1}{T} \mathcal{H}(x)}}{\sum_{z\in \mathcal{X}}e^{-\frac{1}{T} \mathcal{H}(z)}}.$$

We then consider our baseline sampler $P$ to be a Glauber dynamics with a simple random walk proposal. Let 
\begin{equation} \label{eq:glauber}
    P(x,y) = \begin{cases}
    \frac{1}{d} e^{-\frac{1}{T}(\mathcal{H}(y)-\mathcal{H}(x))_+}, & \mathrm{for}\ y = (x^1, \dots, -x^i, \dots, x^d), i \in \llbracket d \rrbracket,\\
    1-\sum_{y \neq x} P(x,y), & \mathrm{if}\ x = y,\\
    0, & \mathrm{otherwise,}
\end{cases}
\end{equation}

where for $m \in \mathbb{R},$ $m_+ = \max\{m,0\}$. This sampler is equivalent to uniformly picking one of the $d$ coordinates, flipping it to the opposite sign and performing an acceptance-rejection stage.

\subsection{Total variation distance across different optimal choices of $G_S$}
This subsection presents empirical results regarding the total variation (TV) distance of various samplers. We define the total variation distance between any two probability distributions $\mu, \nu$ on $\mathcal{X}$ to be 
$$\| \mu - \nu \|_{TV} := \frac{1}{2} \sum_{x\in \mathcal{X}} |\mu(x)-\nu(x)|,$$
and we denote
$$\mathbb{N} \ni t \mapsto \max_{x\in \mathcal{X}} \|P^t(x,\cdot) - \pi \|_{TV}$$
to be the worst-case TV distance of the sampler $P$. In subsequent figures, we shall be comparing the worst-case TV distance between various samplers such as $P, G_S P, G_S P G_S$.

Below, we plot the worst-case TV distance against $t$ for the Curie-Weiss model with $d = 4$ particles. Four models are considered, pairing up high temperature $T = 15$ and low temperature $T = 2$, with external fields $h = 0$ and $h = 2$.

The base sampler $P$ chosen is the Glauber dynamics as given in \eqref{eq:glauber}. The partition $\mathcal{X} = S \sqcup S'$ is chosen using the various metrics discussed:
\begin{enumerate}
    \item A random cut $S$
    \item $S \in \argmin_{S;~S \neq \mathcal{X}, \emptyset} \|G_SPG_S - \Pi\|_{F, \pi}^2$, obtained via brute-force search. This method is called ``Min Frobenius (GPG)" in subsequent figures.
    \item $S \in \argmin_{S;~S \neq \mathcal{X}, \emptyset} D_{\mathrm{KL}}^\pi (G_SPG_S \| \Pi )$, obtained via brute-force search. This method is called ``Min KL (GPG)" in subsequent figures.
    \item $S$ minimising $D_{\mathrm{KL}}^\pi (G_S \| \Pi )$ as suggested in Proposition 7.1 of \cite{GPG}. This method is called ``Min $\pi$ singleton" in subsequent figures.
\end{enumerate}
Note that the above optimisation is performed under brute-force search, and the results in Figures \ref{fig:tvplot} and \ref{fig:cut_by_magnetisation} utilises exact solutions rather than an approximation. 

In Figure \ref{fig:cut_by_magnetisation}, we plot the stationary $\pi-$mass contained in each cut $S$ (blue) and its complement $S'$ (orange), aggregated by magnetisation levels $m_x = \sum_i x^i.$ We show the four different optimal choices of $S$ as described above.

The result in Figure \ref{fig:tvplot} reveals that all three optimisation criteria: $\|G_SPG_S - \Pi\|_{F, \pi}^2$, $D_{\mathrm{KL}}^\pi (G_SPG_S \| \Pi )$ and $D_{\mathrm{KL}}^\pi (G_S \| \Pi )$,
yield nearly indistinguishable worst-case TV curves. Further, the optimal $S$ chosen by minimising $\|G_SPG_S - \Pi\|_{F, \pi}^2$ and $D_{\mathrm{KL}}^\pi (G_SPG_S \| \Pi )$ were identical in all but the case where $T = 2, h = 0$, where multiple cuts are nearly optimal, as shown in Figure \ref{fig:cut_by_magnetisation}. 

At higher temperature, the choice of the cut becomes less critical, as even a randomly chosen partition yields a substantial improvement in mixing. This reflects the weaker energy barriers and increased homogeneity of the stationary distribution of the Curie-Weiss model.

We present similar experimental results for $G_SP$, with the same setup as the one before. Precisely, the base sampler $P$ is the Glauber dynamics as given in \eqref{eq:glauber}, while the subset $S$ in the partition $\mathcal{X} = S \sqcup S'$ is chosen using the metrics discussed earlier:
\begin{enumerate}
    \item A random cut $S$
    \item $S \in \argmin_{S;~S \neq \mathcal{X}, \emptyset} \|G_SP - \Pi\|_{F, \pi}^2$, obtained via brute-force search. This method is called ``Min Frobenius (GP)" in subsequent figures.
    \item $S \in \argmin_{S;~S \neq \mathcal{X}, \emptyset} D_{\mathrm{KL}}^\pi (G_SP \| \Pi )$, obtained via brute-force search. This method is called ``Min KL (GP)" in subsequent figures.
    \item $S$ minimising $D_{\mathrm{KL}}^\pi (G_S \| \Pi )$ as suggested in Proposition 7.1 of \cite{GPG}. This method is called ``Min $\pi$ singleton" in subsequent figures.
\end{enumerate}
We remark that the findings presented in Figures \ref{fig:tvplotGS} and \ref{fig:cut_by_magnetisationGP} yield visually similar results when compared with Figures \ref{fig:tvplot} and \ref{fig:cut_by_magnetisation}.

An interesting observation from both Figure \ref{fig:tvplot} and \ref{fig:tvplotGS} is that a random cut $S$ is able to improve the convergence of $G_S P$ and $G_S P G_S$ when compared with $P$ for both high and low temperature and with or without external field $h$, based on the worst-case TV distance.

\subsection{Numerical experiments of various approximation algorithms}

We evaluate the practical performance of the optimisation procedures proposed in this paper, including modular-modular and MM type algorithms, using the Curie-Weiss model with Glauber dynamics $P$ as a representative testbed. Our experiments assess both the quality of the resulting approximate cuts $S$ relative to the true optimum, and the convergence behaviour of each method with respect to the target objective.

\subsubsection{Approximation in Sections \ref{sec:GP_k=2} and \ref{sec:GPG_k=2}}

First, we evaluate the optimality of the approximations given in Proposition \ref{prop:1/2-approx} and \ref{prop:1/2-approxGPG}, where for positive-semidefinite $P \in \mathcal{L}(\pi)$, we approximate, for $|\mathcal{X}| \geq 2$, that
\begin{align*}
    \argmin_{S \neq \emptyset, \mathcal{X}} \|G_SP - \Pi\|_{F, \pi}^2 &\approx \argmin_{x \in \mathcal{X}} P^2(x,x),\\
    \argmin_{S \neq \emptyset, \mathcal{X}} \|G_SPG_S - \Pi\|_{F, \pi} &\approx \argmin_{x \in \mathcal{X}} P(x,x).
\end{align*}

Below, we compare the objective and the optimal set $S$ against the four Curie-Weiss models used in the previous section. Note that the sampler used is now a lazified sampler (i.e. $1/2(P+I)$), where $P$ is the usual Glauber dynamics. The results are shown below in Figure \ref{fig:1/2approx}.

The results suggest that the diagonal approximation performs best when the temperature is low and the external field is non-zero. In this regime, the stationary distribution is strongly skewed, and the optimal cut for both $G_SP$ and $G_SPG_S$ typically reduces to a singleton, which aligns well the proposed approximation.

At higher temperatures, the stationary distribution becomes closer to uniform, and the optimal cut is correspondingly less lopsided. Also when $h=0$, symmetry in the model induces multiple metastable modes, and the optimal cut is no longer a singleton but instead a more balanced partition separating these modes. 

Overall, these observations indicate that the approximation is most effective when the stationary distribution is strongly concentrated on a small number of states, and less suitable in regimes where mixing is governed by collective or symmetric metastability. 

\subsubsection{Approximation in Section \ref{subsec:MMKL}} \label{}
Next, we evaluate the effectiveness of the minimisation algorithm introduced in Section~\ref{subsec:MMKL}. The modular lower and upper bounds were constructed according to Propositions~\ref{prop:U_lower} and~\ref{prop:T_upper}, and experiments were conducted on the Curie–Weiss model with $d=4$.

Note that since the objective given by Proposition \ref{prop:DPGdiffsub} is symmetric up to complements, it suffices for us to consider only the subsets
$$S \subset \mathcal{X}, \{-1\}^d \in S.$$
This reduces the effective number of configurations to $2^{2^d-1}-1 = 2^{15}-1$ subsets after removing the trivial set $S = \mathcal{X}$. 

We test four parameter regimes corresponding to $(T,h) \in \{2,5\} \times \{0,2\}$. For each setting, a brute-force enumeration over the reduced state space was performed to identify all globally optimal subsets (up to complement). This provides the complete set of global minimisers and serves as a benchmark against which the MM algorithm is evaluated.

Since the MM procedure constructs modular bounds using a random permutation, as well as a random starting subset at each iteration, the algorithm is inherently stochastic. To assess robustness, we therefore perform multiple independent runs with different random seeds and record the proportion of runs that converge to a globally optimal solution.

Table \ref{tab:secMMKL} below compares the empirical hit-rate of the MM algorithm with the uniform baseline probability of randomly selecting an optimal subset from the search space. This comparison allows us to distinguish genuine algorithmic effectiveness from success due to chance, thereby quantifying the effectiveness of the MM algorithm.

\begin{table}[ht]
    \centering
    \small
    \begin{tabular}{|p{3.8cm}|c|c|c|c|}
        \hline
        \textbf{Model parameters} 
        & $T=2, h=0$ 
        & $T=2, h=2$ 
        & $T=5, h=0$ 
        & $T=5, h=2$ \\
        \hline
        
        Convergence rate (out of 1000 runs) 
        & 0.145 & 0.535 & 0.002 & 0.011 \\
        \hline
        
        No.\ of optimal subsets 
        & 2 & 2 & 2 & 2 \\
        \hline
        
        Probability of uniformly picking optimal subset 
        & $6.1\times10^{-5}$ 
        & $6.1\times10^{-5}$ 
        & $6.1\times10^{-5}$ 
        & $6.1\times10^{-5}$ \\
        \hline
    \end{tabular}
    \caption{MM convergence results for Section \ref{subsec:MMKL}. ``Convergence rate (out of $1000$ runs)" here refers to the proportion of runs of the MM algorithm, out of 1000 runs, that converge to the optimal $S$ obtained via brute-force.}
    \label{tab:secMMKL}
\end{table}

The results shown in Table \ref{tab:secMMKL} reveal that the algorithm works best for the case where $T = h = 2$, likely due to the strongly skewed landscape that allows for smooth descent towards the global minima. 

As the temperature increases and the distribution becomes flatter, the MM iterates are more prone to becoming trapped in local minima, leading to a noticeable deterioration in hit rate.

Overall, Table \ref{tab:secMMKL} reveals that across all models, the MM algorithm proposed has a higher empirical chance of selecting the optimal subset over random uniform selection. 

\subsubsection{Approximation in Section \ref{sub:doubleMM}}
Finally, we evaluate the coordinate descent algorithm introduced in Section~\ref{sub:doubleMM}. Recall that the objective $D^\pi_{\mathrm{KL}}(G_V P G_S \| \Pi)$ admits a decomposition as a difference of two submodular functions. By Lemma~\ref{lem:decompose}, this decomposition separates over the variables $S$ and $V$, allowing us to optimize each argument while holding the other fixed.

We therefore implemented the coordinate descent scheme, using the MM algorithm described in \cite{iyer2013algorithmsapproximateminimizationdifference} for the minimisation of one variable (in $S$ or $V$) in each descent step. We repeat the algorithm until no further decrease of the objective is observed. 

For this experiment, we restricted the Curie-Weiss model at $d = 3$, so that it remains computationally feasible to solve the minimisation via brute-force. The parameter settings considered are again $(T,h) \in \{2,5\} \times \{0,2\}$, and just like the previous experiment, we only consider subsets up to complement. 

For each iteration, we randomly initialise the starting sets $V_0$ and $S_0$, and we record the empirical hit-rate of the algorithm. Note that in this case, after restricting the trivial set $S = \mathcal{X}$ and deduplication of the complement, the total number of pairs $S \times V$ is $(2^7-1)^2 = 16129$.

\begin{table}
    \centering
    \small
    \begin{tabular}{|p{3.8cm}|c|c|c|c|}
        \hline
        \textbf{Model parameters} 
        & $T=2, h=0$ 
        & $T=2, h=2$ 
        & $T=5, h=0$ 
        & $T=5, h=2$ \\
        \hline
        
        Convergence rate (out of 100 runs) 
        & 0.39 & 0.13 & 0.34 & 0.10 \\
        \hline
        
        No.\ of optimal subsets 
        & 200 & 8 & 200 & 4 \\
        \hline
        
        Probability of uniformly picking optimal subset 
        & 0.0124 
        & $4.96\times10^{-4}$ 
        & 0.0124  
        & $2.48\times10^{-4}$ \\
        \hline
    \end{tabular}
    \caption{Coordinate descent convergence results for Section \ref{sub:doubleMM}. ``Convergence rate (out of $100$ runs)" here refers to the proportion of runs of the coordinate descent algorithm, out of 100 runs, that converge to the optimal $S,V$ obtained via brute-force.}
    \label{tab:doubleMM}
\end{table}

The results in Table~\ref{tab:doubleMM} show that the coordinate descent algorithm substantially outperforms uniform random selection of $(S,V)$. While the absolute convergence rate is higher in the zero-field case $(h=0)$, this setting also admits many more global optima. In contrast, when $h=2$, the number of optimal pairs is significantly smaller, yet the algorithm still converges with non-negligible probability. Relative to the uniform baseline, the improvement factor is markedly larger when $h=2$, suggesting that the algorithm is more effective when working on skewed distributions.

We note that although each MM step is computationally inexpensive, the alternating descent may require many successive MM updates before reaching a local minimum. Consequently, even for $d=3$, the overall procedure can become computationally demanding depending on initialisation.

\subsubsection{Practical limitations}
We now discuss several practical limitations encountered in implementing the proposed algorithm in Section \ref{sub:frobsubmod}. In Proposition~\ref{prop:GPGFdecompose}, the decomposition involves the term
$$\frac{1}{\pi(S)\pi(S')}$$
which diverges as $\pi(S)$ converges to 0 or 1. As such, when $S$ has a very small or very large probability mass, the modular bounds suffer from numerical instability. 

In the case where the global minima is near the two extremes, we observed that while the descent direction remained consistent with improvement of the objective, the instability of the bound near these extreme-mass regions caused the updates to become increasingly ill-conditioned. As the iterate approached the global minimum, the algorithm often terminated prematurely before reaching the true global minimum.


\begin{supplement}
All figures used in the paper can be found below.

\begin{figure}[h!]
    \centering
    \includegraphics[width=1\linewidth]{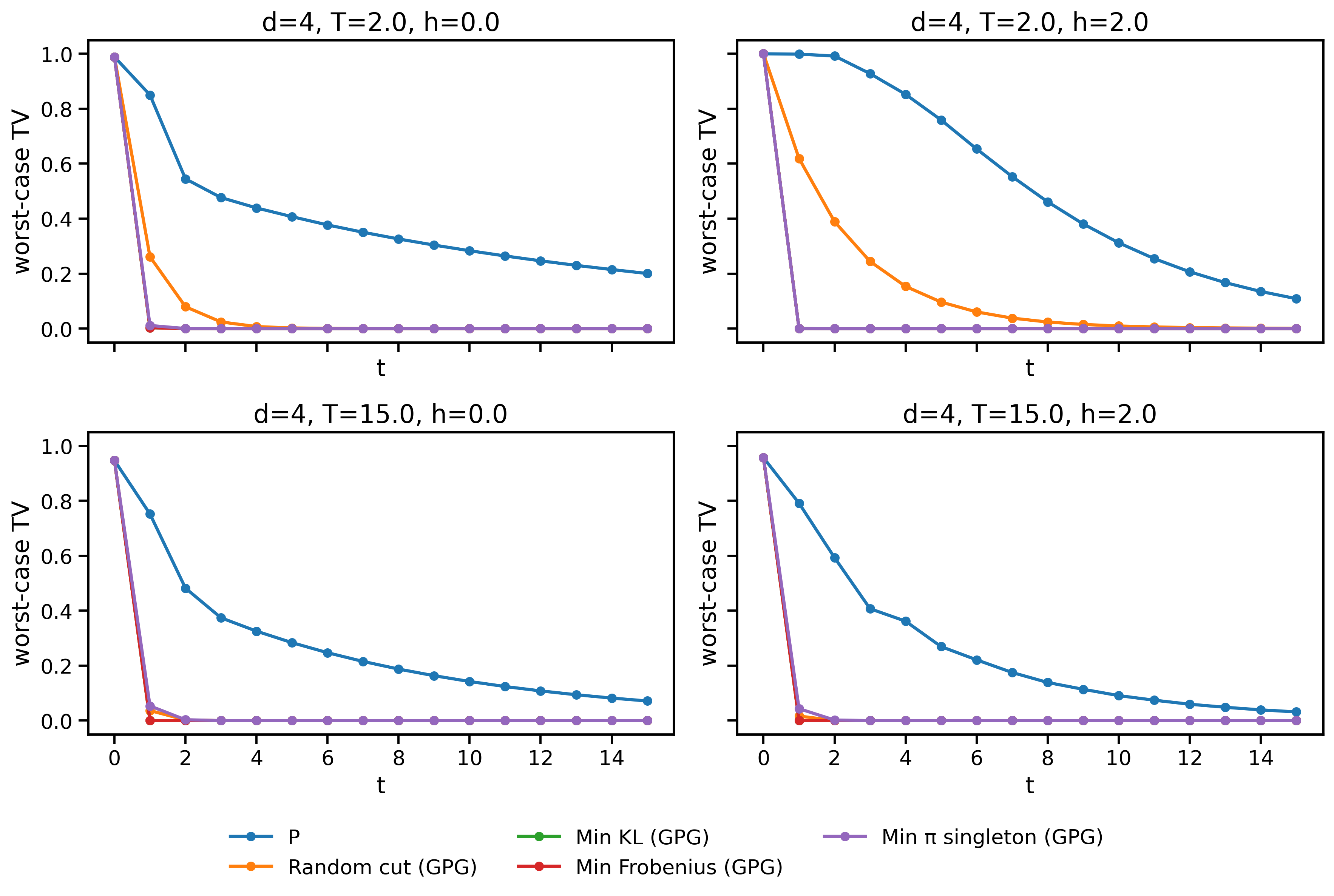}
    \caption{Plot of worst-case TV distance for $G_S P G_S$ chosen amongst different criteria}
    \label{fig:tvplot}
\end{figure}

\begin{figure}[htbp]
    \centering

    \begin{subfigure}{\linewidth}
        \centering
        \includegraphics[height=0.20\textheight]{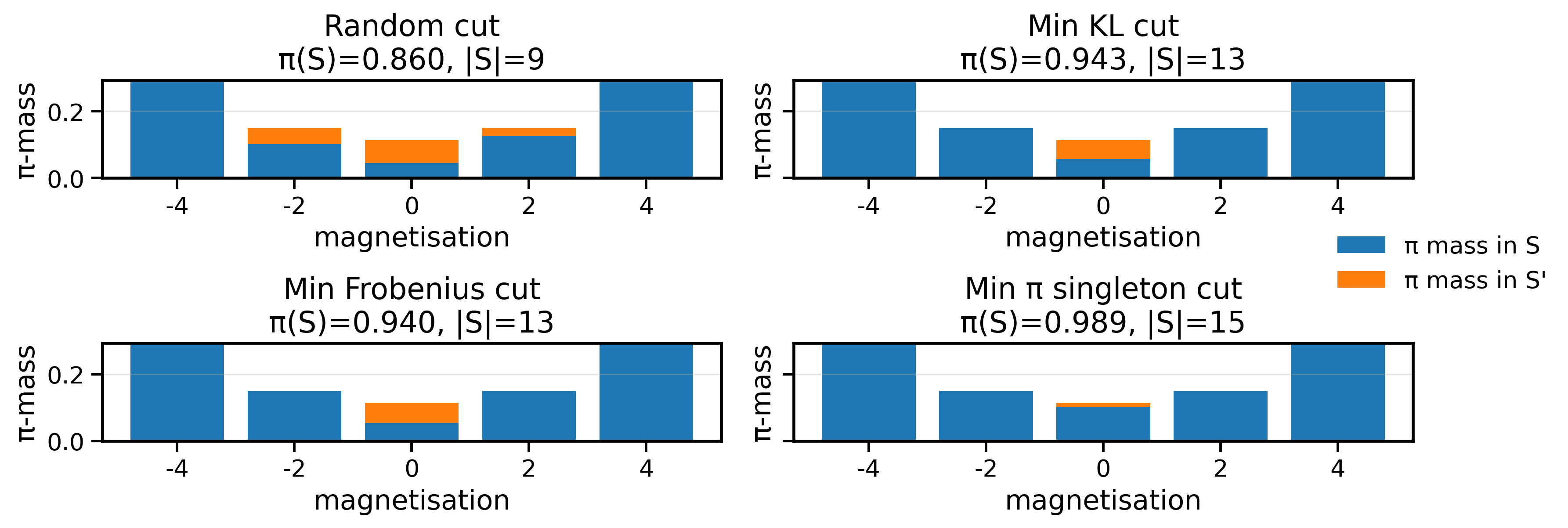}
        \caption{$T=2,\ h=0$}
    \end{subfigure}

    \begin{subfigure}{\linewidth}
        \centering
        \includegraphics[height=0.20\textheight]{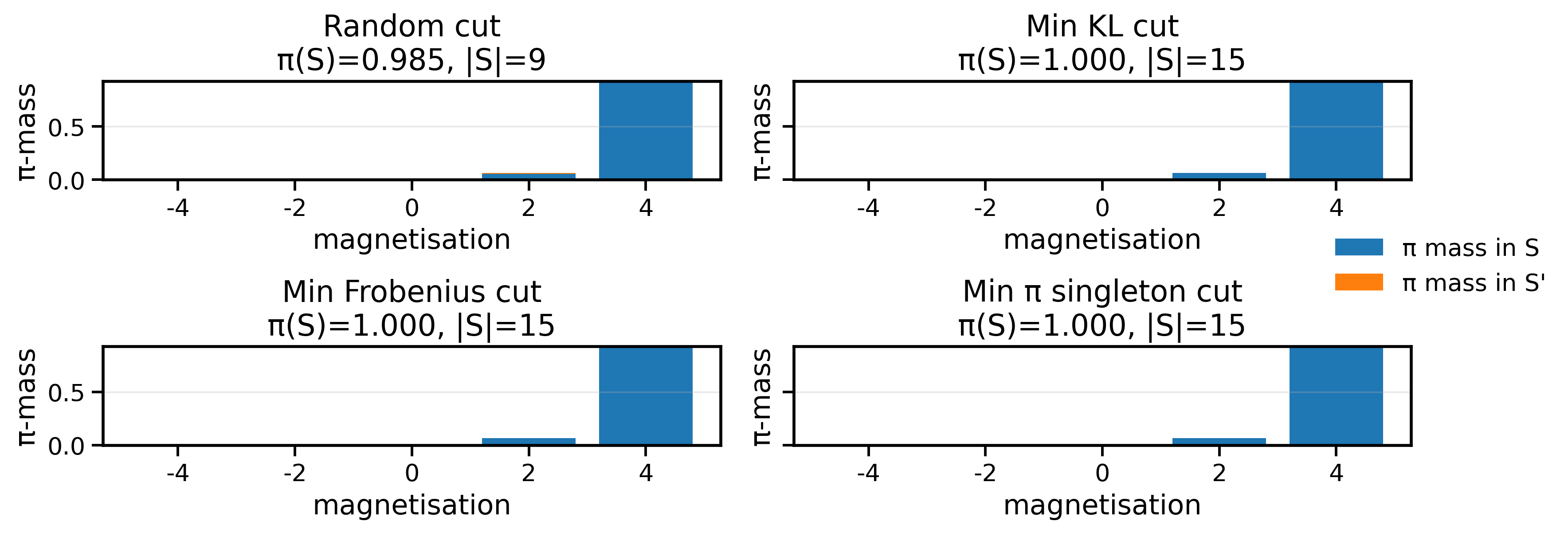}
        \caption{$T=2,\ h=2$}
    \end{subfigure}

    \begin{subfigure}{\linewidth}
        \centering
        \includegraphics[height=0.20\textheight]{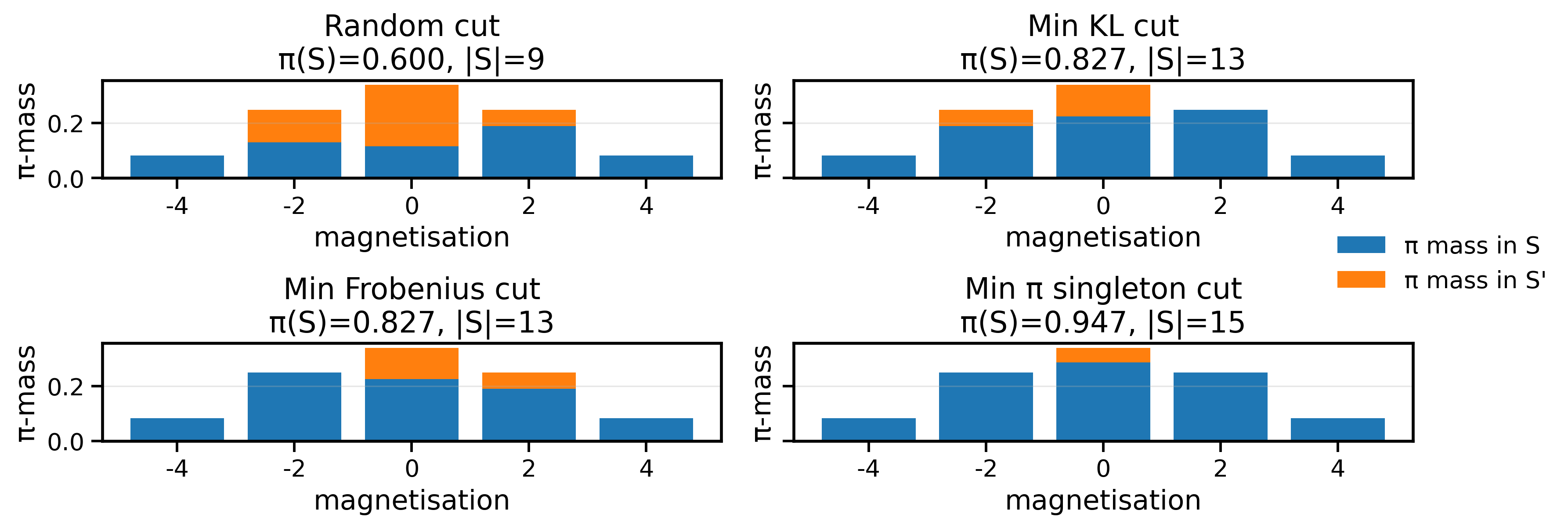}
        \caption{$T=15,\ h=0$}
    \end{subfigure}

    \begin{subfigure}{\linewidth}
        \centering
        \includegraphics[height=0.20\textheight]{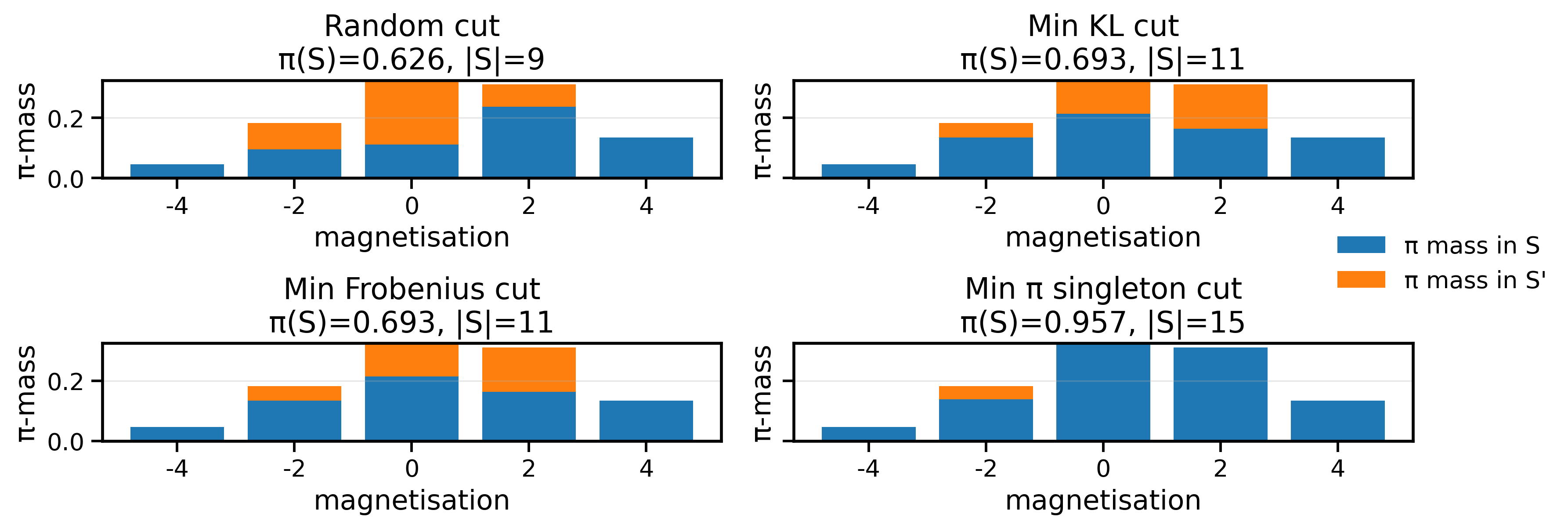}
        \caption{$T=15,\ h=2$}
    \end{subfigure}

    \caption{Cut visualisation by magnetisation for the Curie--Weiss model with $d=4$.}
    \label{fig:cut_by_magnetisation}
\end{figure}

\begin{figure}[htbp]
    \centering
    \includegraphics[width=1\linewidth]{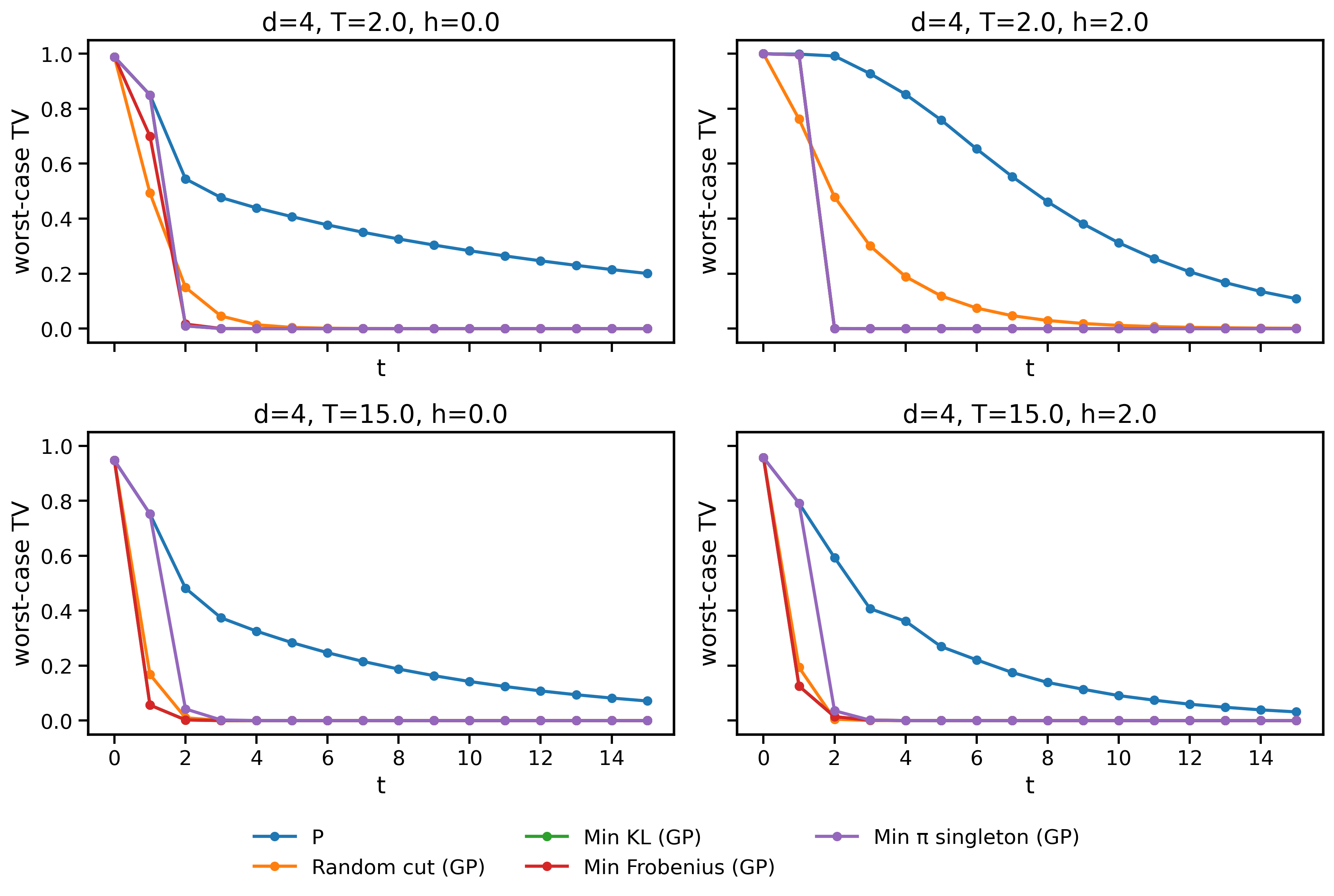}
    \caption{Plot of worst-case TV distance for $G_S P$ chosen amongst different criteria}
    \label{fig:tvplotGS}
\end{figure}

\begin{figure}[htbp]
    \centering

    \begin{subfigure}{\linewidth}
        \centering
        \includegraphics[height=0.20\textheight]{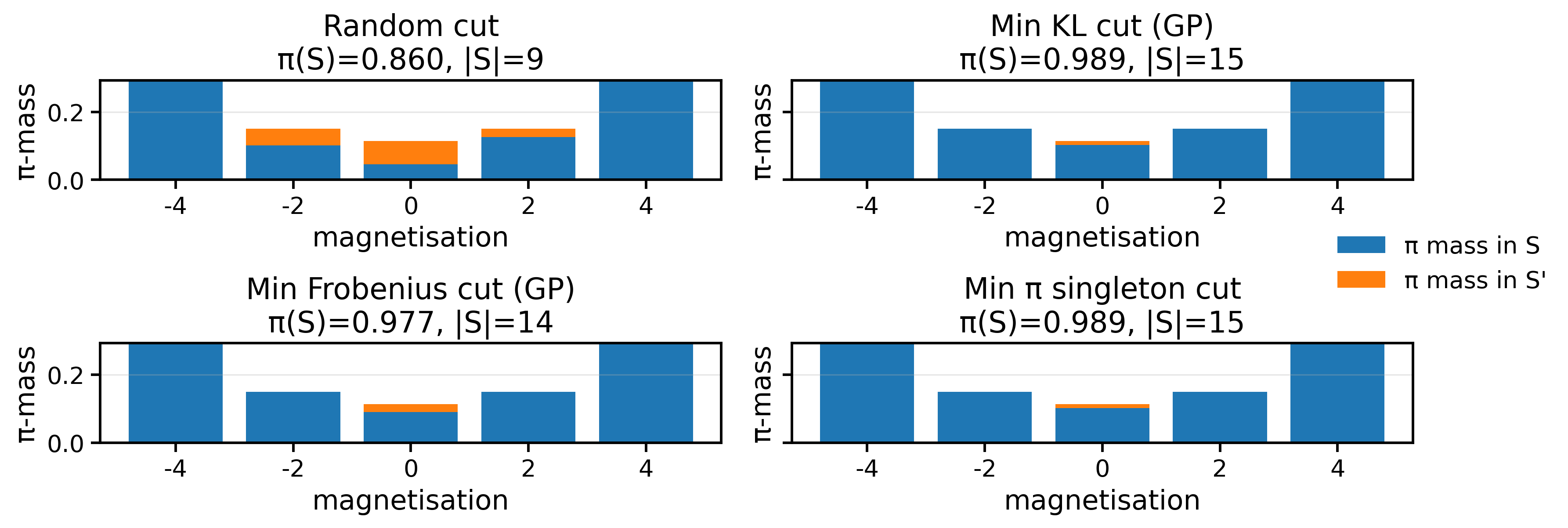}
        \caption{$T=2,\ h=0$}
    \end{subfigure}

    \begin{subfigure}{\linewidth}
        \centering
        \includegraphics[height=0.20\textheight]{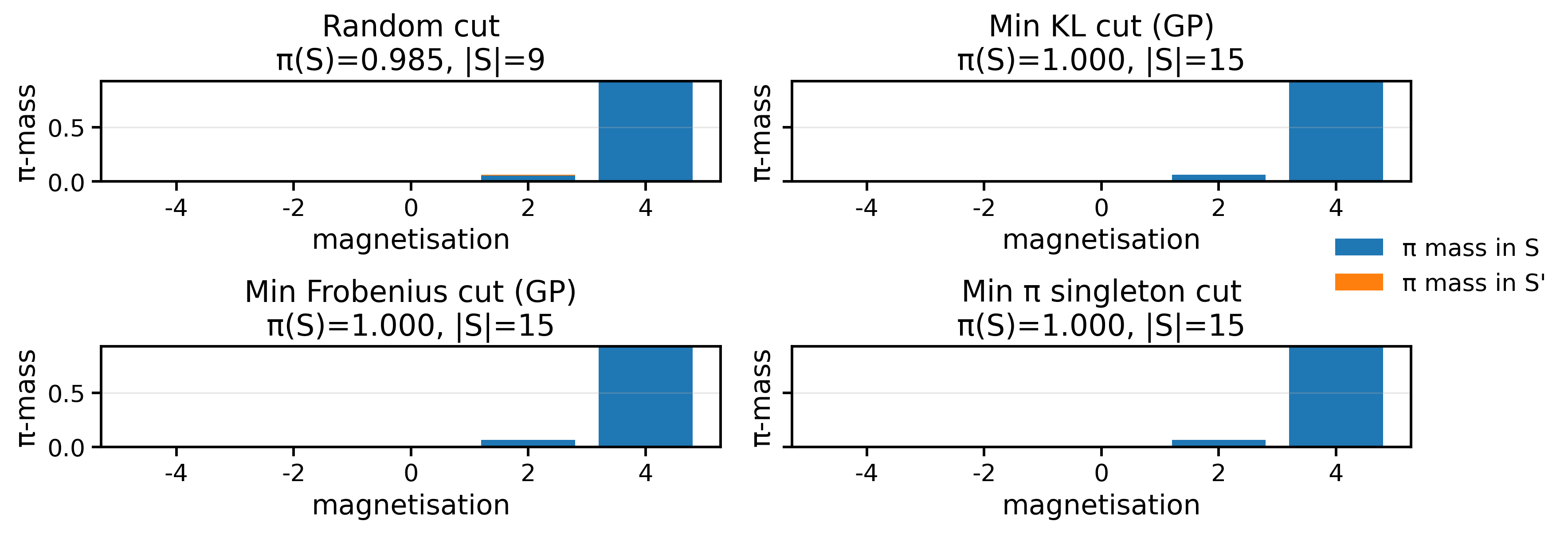}
        \caption{$T=2,\ h=2$}
    \end{subfigure}

    \begin{subfigure}{\linewidth}
        \centering
        \includegraphics[height=0.20\textheight]{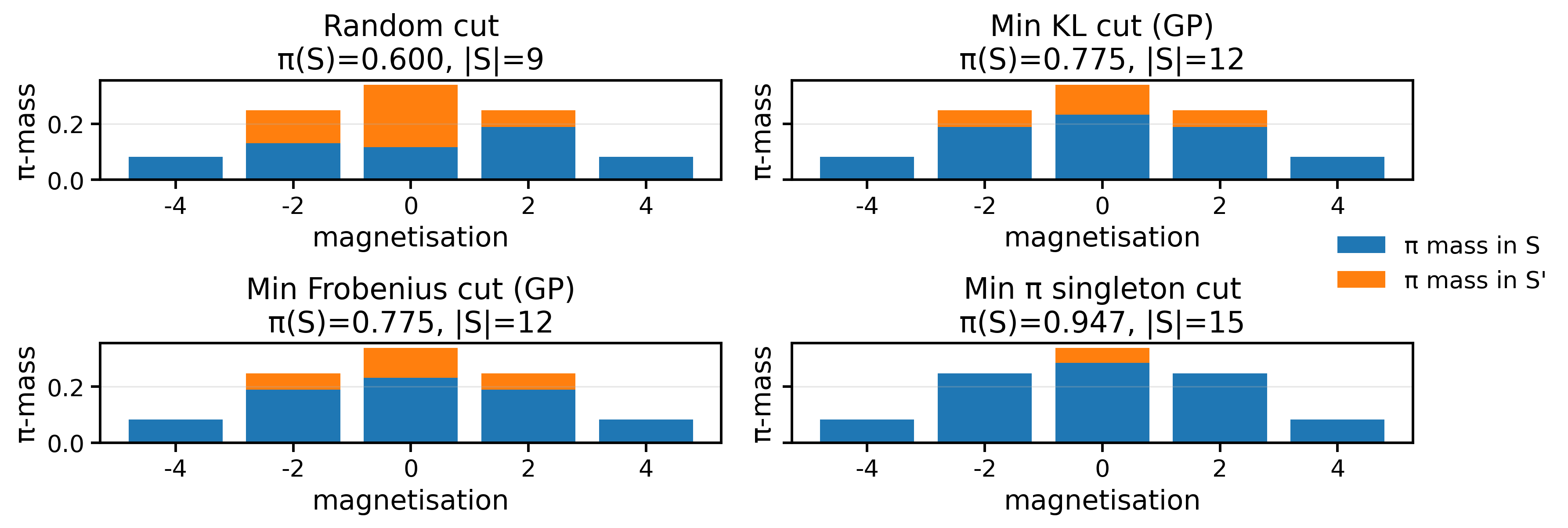}
        \caption{$T=15,\ h=0$}
    \end{subfigure}

    \begin{subfigure}{\linewidth}
        \centering
        \includegraphics[height=0.20\textheight]{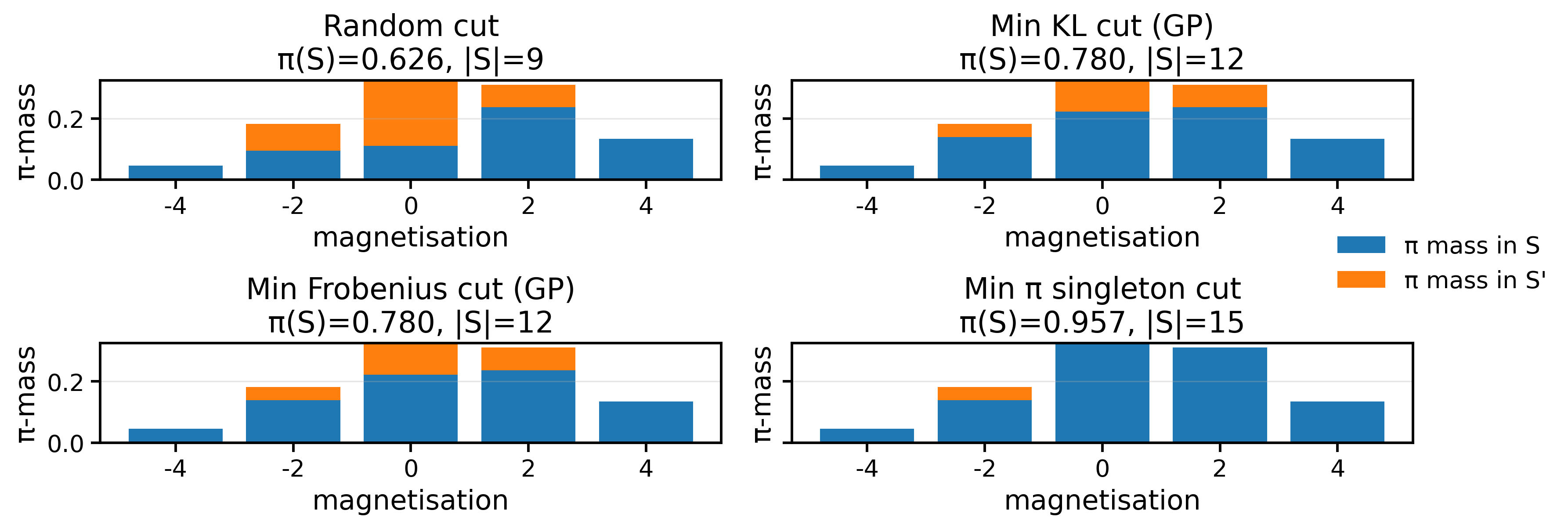}
        \caption{$T=15,\ h=2$}
    \end{subfigure}

    \caption{Cut visualisation by magnetisation for the Curie--Weiss model with $d=4$.}
    \label{fig:cut_by_magnetisationGP}
\end{figure}

\begin{figure}[htbp]
    \centering

    \begin{subfigure}{\linewidth}
        \centering
        \includegraphics[height=0.20\textheight]{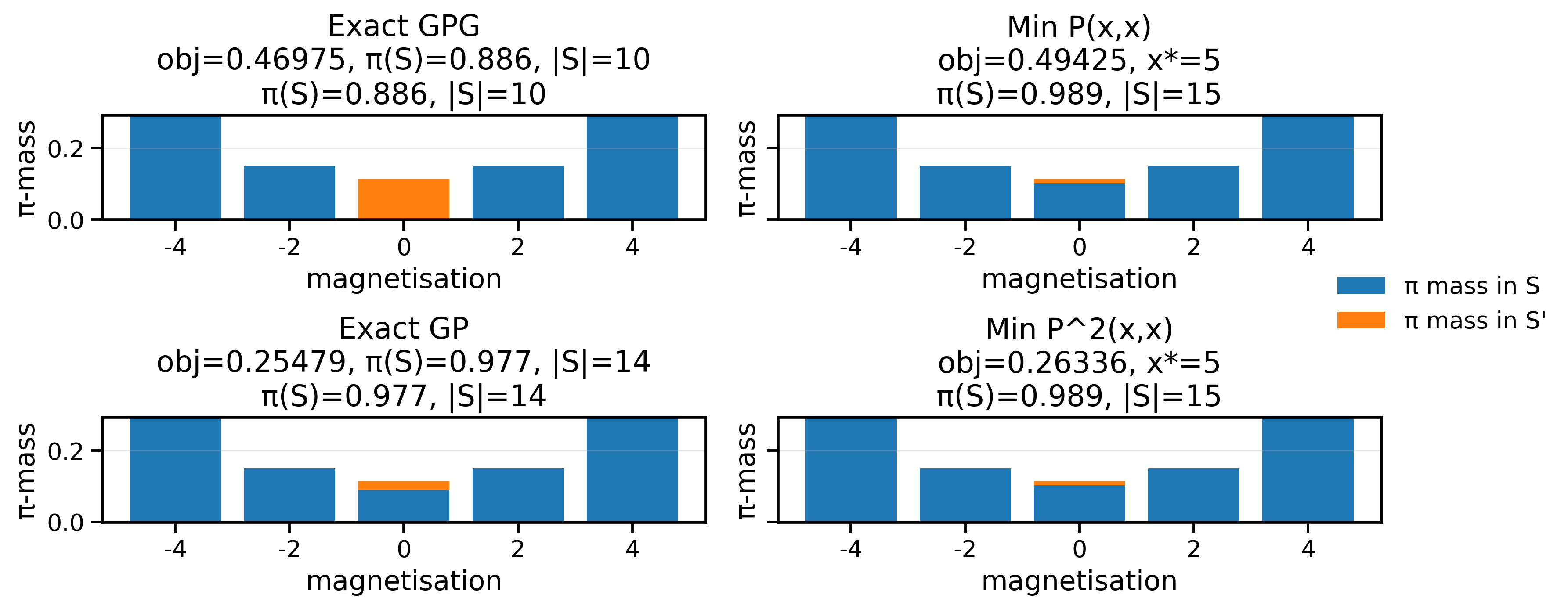}
        \caption{$T=2,\ h=0$}
    \end{subfigure}

    \begin{subfigure}{\linewidth}
        \centering
        \includegraphics[height=0.20\textheight]{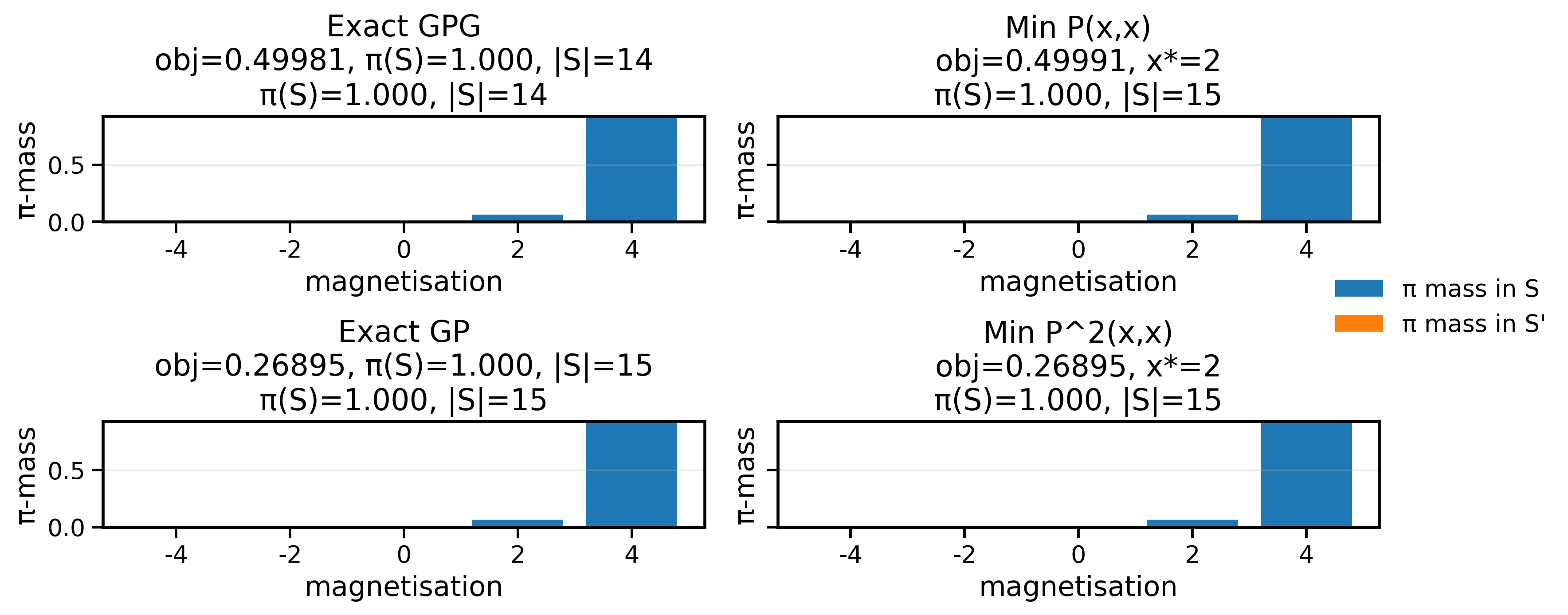}
        \caption{$T=2,\ h=2$}
    \end{subfigure}

    \begin{subfigure}{\linewidth}
        \centering
        \includegraphics[height=0.20\textheight]{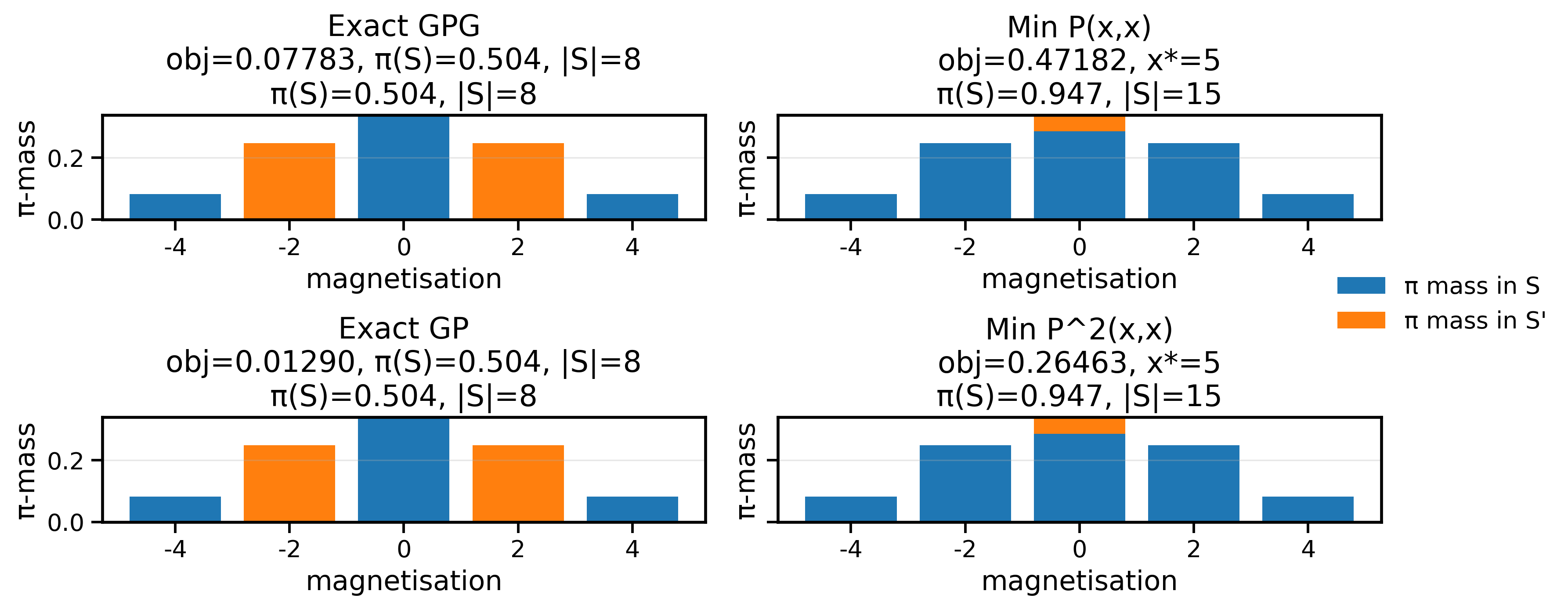}
        \caption{$T=15,\ h=0$}
    \end{subfigure}

    \begin{subfigure}{\linewidth}
        \centering
        \includegraphics[height=0.20\textheight]{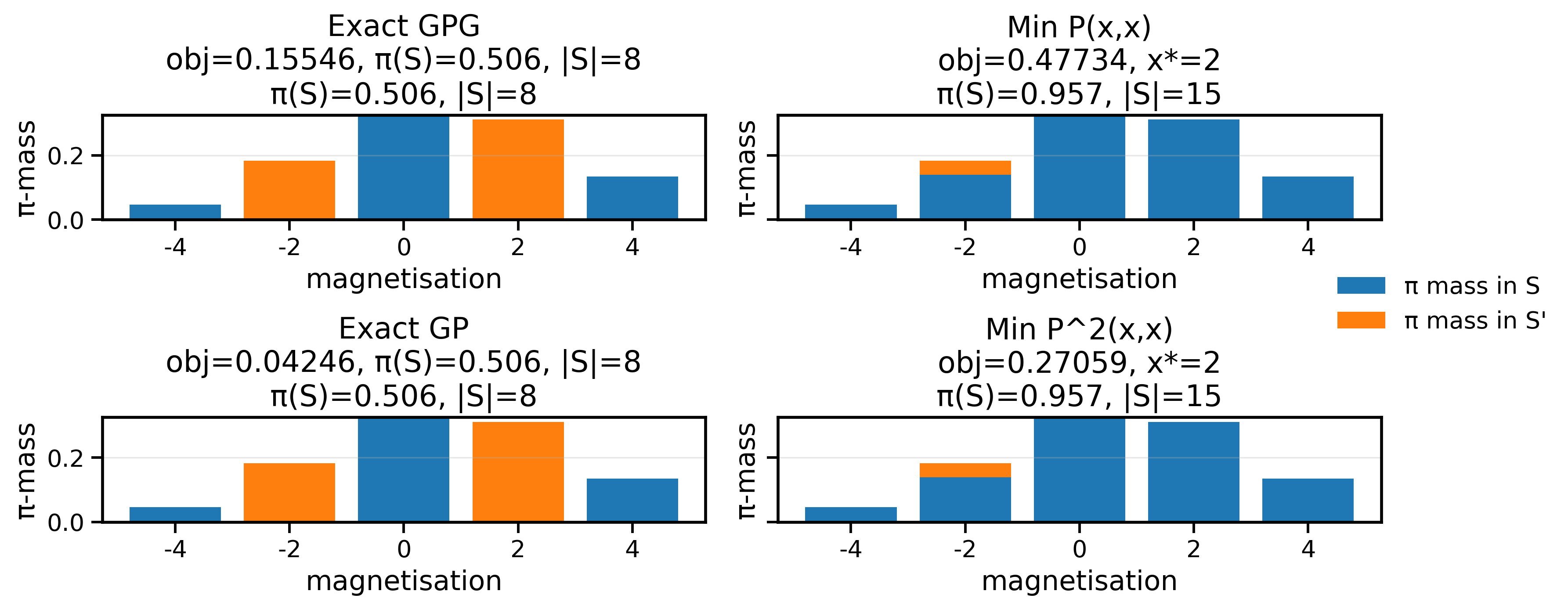}
        \caption{$T=15,\ h=2$}
    \end{subfigure}

    \caption{Comparison of cuts between true Frobenius objective via brute-force search and 1/2-approximate minimiser given in Proposition \ref{prop:1/2-approx} and \ref{prop:1/2-approxGPG}}
    \label{fig:1/2approx}
\end{figure}

\end{supplement}

\clearpage
\bibliographystyle{unsrtnat}
\bibliography{ref}

\begin{acks}
Michael Choi acknowledges the financial support of the projects A-0000178-02-00 and A-8003574-00-00 at National University of Singapore.
\end{acks}


\end{document}